\newcommand{\Q}{\mathbb Q}
\newcommand{\N}{\mathbb{N}}
\newcommand{\Z}{\mathbb{Z}}
\newcommand{\R}{\mathbb{R}}
\newcommand{\cB}{\mathcal{B}}
\newcommand{\cL}{\mathcal{L}}
\newcommand{\cC}{\mathcal{C}}
\newcommand{\eps}{\varepsilon}
\newcommand{\dotvar}{\,\cdot\,}
\newcommand{\farrow}{\xrightarrow[]{d_f}}
\newcommand{\harrow}{\xrightarrow[]{d_H}}
\newcommand{\sprod}[2]{#1 \cdot #2 }
\newcommand{\setcond}[2]{\left\{ #1 \,:\, #2 \right\}}
\newcommand{\conv}{\operatorname{conv}}
\newcommand{\intr}{\operatorname{int}}
\newcommand{\Flt}{\operatorname{Flt}}
\newcommand{\cl}{\operatorname{cl}}
\newcommand{\vertices}{\operatorname{vert}}
\newcommand{\aff}{\operatorname{aff}}
\newcommand{\rec}{\operatorname{rec}}
\newcommand{\relint}{\operatorname{relint}}
\newcommand{\cone}{\operatorname{cone}}
\newcommand{\corn}{\operatorname{Cor}}
\newtheorem{QUESTION}{Question}{\bfseries}{\itshape}
\newcommand{\TheTitle}{Approximation with intersection cuts} 
\newcommand{\TheAuthors}{Gennadiy Averkov, Amitabh Basu, and Joseph Paat}
\headers{\TheTitle}{\TheAuthors}
\title{Approximation of corner polyhedra with families of intersection cuts}
\author{Gennadiy Averkov \thanks{Institute of Mathematical Optimization, Faculty of Mathematics, University of Magdeburg, Germany.}
\and
Amitabh Basu \thanks{Dept. of Applied Mathematics and Statistics, The Johns Hopkins University.}
\and
Joseph Paat \thanks{Institute for Operations Research, ETH Z\"urich, Switzerland.}\funding{Amitabh Basu and Joseph Paat were supported in part by the NSF grant CMMI1452820.}}
\begin{document}
\maketitle
\begin{abstract}
We study the problem of approximating the corner polyhedron using intersection cuts derived from families of lattice-free sets in $\R^n$. In particular, we look at the problem of characterizing families that approximate the
corner polyhedron up to a constant factor, which depends only on $n$ and not the data or dimension of the corner polyhedron. The literature already
contains several results in this direction. In this paper, we use the maximum number of facets of lattice-free sets in a family as a
measure of its complexity and precisely characterize the level of complexity of a family required for constant factor approximations. As one of the main results, we show that, for each natural number $n$, a corner
polyhedron with $n$ basic integer variables and an arbitrary number of continuous non-basic variables is approximated  up to a constant factor by intersection cuts
from lattice-free sets with at most $i$ facets if $i> 2^{n-1}$ and that no such approximation is possible if $i \le 2^{n-1}$. When the approximation factor is allowed to depend on the denominator of the fractional vertex of the linear relaxation of the corner polyhedron, we show that the threshold is $i > n$ versus $i \leq n$. The tools introduced for proving such results are of independent interest for studying intersection cuts.\footnote{An abbreviated version of this work was accepted for presentation at the 19th International IPCO Conference at the University of Waterloo. Many of the proofs presented in this manuscript are omitted in the IPCO version, and results presented in this manuscript improve upon those in the IPCO version.} 
\end{abstract}

\begin{keywords}
  cutting plane theory, lattice-free sets, corner polyhedra
\end{keywords}

\begin{AMS}
90C10, 90C11
\end{AMS}

\section{Introduction}

Given $n,k\in \N$, a matrix $R:=(r_1,\ldots,r_k) \in \R^{n \times k}$ with columns $r_1,\ldots,r_k \in \R^n$, and a vector $f \in \R^n \setminus \Z^n$, the set 
\begin{equation*}
\textstyle	\corn(R,f) := \conv \setcond{s \in \R_{\ge 0}^k}{f + \sum_{i=1}^k s_i r_i \in \Z^n}
\end{equation*}
has been studied in the integer programming literature as a framework for deriving {\em cutting planes} for general mixed-integer programs. When both $R$ and $f$ are rational, the well-known Meyer's theorem (see~\cite{Meyer}) implies that $\corn(R,f)$ is a rational polyhedron. In the case of rational $(R,f)$, we will refer to $\corn(R,f)$ as the \emph{corner polyhedron for $(R,f)$}. The original definition of the corner polyhedron going back to~\cite{gom} involved the condition $s\in \Z^k_{\geq 0}$ rather than $s\in \R^k_{\geq 0}$. Since then, the term corner polyhedron has been used in a broader sense, with $s$ constrained to be a continuous, integer, or mixed-integer vector. See also Chapter 6 of~\cite{conforti2014integer} for a detailed discussion.

An inequality description of $\corn(R,f)$ can be obtained via gauge functions of lattice-free sets. We define $\cC^n$ to be the family of all $n$-dimensional, closed, convex subsets of $\R^n$.  A set $B\subseteq\R^n$ is \emph{lattice-free} if $B\in \cC^n$ and the interior of $B$ does not contain points of $\Z^n$. A lattice-free set is called \emph{maximal} if it is not a proper subset of another lattice-free set\footnote{Some sources do not impose the condition $\dim(B) = n$ in the definition of maximal lattice-free sets, but the case $\dim(B) < n$ is not needed in this paper.}. For $B\in \cC^n$ with $0\in \intr(B)$, the \emph{gauge function} $\psi_{B} : \R^n \rightarrow \R$ of $B$ is 
\[
\psi_{B}(r) := \inf \setcond{\lambda > 0}{r \in \lambda B}.
\]

Given a set $B\in \cC^n$ with $f \in \intr(B)$, the \emph{cut for $(R,f)$ generated by $B$} (or the \emph{$B$-cut} of $(R,f)$) is
\[
\textstyle	C_B(R,f) := \setcond{s \in \R_{\ge 0}^k}{\sum_{i=1}^k s_i \psi_{B-f}(r_i) \ge 1}.
\]
In the degenerate case where $f \in \R^n \setminus \intr(B)$, we define $C_B(R,f) := \R_{\ge 0}^k$. If $B$ is lattice-free, we call $C_B(R,f)$ an \emph{intersection cut}. Given a family $\cB\subseteq \cC^n$, we call the set 
\[
\textstyle	C_{\cB}(R,f) := \bigcap_{B \in \cB} C_B(R,f)
\]
the \emph{$\cB$-closure} for $(R,f)$. If the family $\cB$ is empty, then we define $C_{\cB}(R,f) := \R^k_{\geq0}$. It should be noted that standard definitions of $B$-cuts require $B$ to be a lattice-free set, see for example~\cite[page 187]{conforti2014integer}. It will be convenient for us to work with the more general concept, which does not insist that B is lattice-free. This allows us to cover cuts valid for the generalization of the corner polyhedron, in which $\Z^n$ is replaced by a more general set $S \subseteq \R^n$, see for example~\cite{conforti2013cut}. An interesting case that would deserve an independent study is $S=\Z_{\ge 0}^n$. 


%
Cuts can be partially ordered by set inclusion. For sets $B_1, B_2 \in \cC^n$ satisfying the inclusion $B_1 \subseteq B_2$, the respective cuts are related by the inclusion $C_{B_1}(R,f) \supseteq C_{B_2}(R,f)$ for each $(R,f)$. It is known that every lattice-free set is a subset of a maximal lattice-free set~\cite{bccz2}. Hence cuts generated by maximal lattice-free sets are the strongest ones among all intersection cuts. It is also known that maximal lattice-free sets are polyhedra~\cite{lovasz}. Therefore it is natural to focus on cuts generated by lattice-free polyhedra, since among these cuts are all of the strongest intersection cuts.


\begin{definition}[$\cL_i^n$, $i$-hedral closures, and $\cL_*^n$] For $i\in \N$, let $\cL_i^n$ denote the family of all lattice-free (not necessarily maximal) polyhedra in $\R^n$ with at most $i$ facets; we call $C_{\cL_i^n}(R,f)$ the $i$-hedral closure of $(R,f)$. Let $\cL_\ast^n$ denote the family of all lattice-free (not necessarily maximal) polyhedra in $\R^n$.  \end{definition}

\noindent Elements of $\cL_2^n$ are called \emph{lattice-free splits}, and the respective closure $C_{\cL_2^n}(R,f)$ is the well-known split closure of $(R,f)$, see~\cite[page 151]{conforti2014integer}. 

For every family of lattice-free sets $\cB$, the $\cB$-closure $C_{\cB}(R,f)$ is a relaxation of $\corn(R,f)$, which means that the inclusion $\corn(R,f)\subseteq C_{\cB}(R,f)$ holds for every choice of $(R,f)$. Furthermore, the equality $\corn(R,f) = C_{\cB}(R,f)$ is attained when $\cB$ contains all maximal lattice-free polyhedra and $(R,f)$ is rational~\cite{zambelli}. 
This implies that one approach to computing $\corn(R,f)$ for rational $(R,f)$ is to classify maximal lattice-free sets and compute cuts using the corresponding gauge functions. Recent work has focused on this classification \cite{DBLP:conf/ipco/AndersenLWW07,AKW-2015,averkov2011,dw-ipco}, and the classification was given for $n=2$ in~\cite{dw-ipco}. However, a classification is not known for any $n\geq 3$. Furthermore, even if such a classification was available for an arbitrary dimension $n$, the respective gauge functions could be difficult to compute, in general. In fact, the number $i$ of facets of an arbitrary maximal lattice-free polyhedron $B\subseteq \R^n$ can be as large as $2^n$, while the computation of the respective gauge function would require evaluation of $i$ scalar products in the generic case. 

In light of these difficulties, instead of fully describing $\corn(R,f)$ by classifying lattice-free sets, one can aim to find a small and simple family of lattice-free sets whose intersection cut closure approximates $\corn(R,f)$ within a desired tolerance~\cite{DBLP:journals/siamjo/AndersenWW09,ACGT2014,bbcm}. In other words, for a fixed $n\in \N$, one can search for a simple family $\cB$ of lattice-free sets and a constant $\alpha\geq1$ such that the inclusions
$$\corn(R,f)\subseteq C_{\cB}(R,f) \subseteq \frac{1}{\alpha}\corn(R,f)$$
hold for all $(R,f)$. The inclusion $\corn(R,f)\subseteq C_{\cB}(R,f)$ holds for all $(R,f)$. Thus, we ask the following:
\begin{QUESTION}\label{ques:one}
Given subfamilies $\cB$ and $\cL$ of $\cC^n$, does there exists an $\alpha \ge 1$ such that $C_\cB(R,f) \subseteq \frac{1}{\alpha} C_\cL(R,f)$ holds for all pairs $(R,f)$?
\end{QUESTION}
\begin{QUESTION}\label{ques:two}
Given subfamilies $\cB$ and $\cL$ of $\cC^n$, is it true that for every $f \in \Q^n \setminus \Z^n$ there exists an $\alpha \ge 1$ (possibly, depending on $f$) such that $C_\cB(R,f) \subseteq \frac{1}{\alpha} C_\cL(R,f)$ holds for every rational $R$?
\end{QUESTION}


In this paper, we focus on answering these questions. If such an $\alpha$ exists for either of the previous questions, then the $\cB$-closure approximates the $\cL$-closure within a factor of $\alpha$, i.e., the $\cB$-closure provides a finite approximation of the $\cL$-closure for all choices of $(R,f)$ (or for a fixed $f$ and all rational $R$). Since the corner polyhedron of $(R,f)$ coincides with $C_{\cL_*^n}(R,f)$ for rational $(R,f)$, we are particularly interested in studying the case $\cL = \cL_*^n$. On the other hand, as the number of facets is a natural measure for describing the complexity of maximal lattice-free sets, we are interested in the case $\cB = \cL_i^n$ with $i\in \N$. Other subfamilies $\cB$ and $\cL$ of $\cC^n$ (not necessarily subfamilies of lattice-free sets) may be of independent interest in future work. 

\medskip

\textbf{Notation and terminology.} For background on convex sets and polyhedra, see for example~\cite{barvinok},~\cite{rock}, and~\cite{rsconvex}, and for background on integer programming, see for example~\cite{conforti2014integer}.

We use $\N$ to denote the set of all positive integers. The value $n\in \N$ will always denote the dimension of the ambient space $\R^n$,
and the values $r_1, \dots, r_k$ will always denote the columns of $R$ with $k\in \N$ . 
Stating that a condition holds for every $R$ means that the condition holds for every $R\in \bigcup_{k=1}^\infty \R^{n\times k}$. Stating that a condition holds for every $(R,f)$ means the condition holds for every $R\in  \bigcup_{k=1}^\infty \R^{n\times k}$ and $f\in \R^n\setminus \Z^n$. 

Let $[m]:=\{1,\dots, m\}$ for $m\in \N$. For $X\subseteq \R^n$, we use $\aff(X)$, $\cone(X)$, $\conv(X)$, $\intr(X)$ to denote the affine hull of $X$, the convex conic hull of $X$, the convex hull of $X$, and the interior of $X$, respectively. For a closed, convex set $C$ in $\R^n$, we use $\rec(C)$ and $\relint(C)$ to denote the recession cone of $C$ and the relative interior of $C$, respectively. For a polyhedron $P\subseteq \R^n$, we use $\vertices(P)$ to denote the set of all vertices of $P$. For sets $A, B\subseteq \R^n$, the set $A+B := \{a+b: a\in A, b\in B\}$ is the \emph{Minkowski sum} of $A$ and $B$. We say the Minkowski sum of $A$ and $B$ is \emph{direct} if every point in $A+B$ has a unique representation as the sum $a+b$ with $a\in A$ and $b\in B$; in this case, we write $A\oplus B$ rather than $A+B$. For $i\in [n]$, $e_i$ denotes the $i$-th standard basis vector in $\R^n$. For vectors $u,v\in \R^n$, we use the notation $u\cdot v$ to denote the standard scalar product of $u$ and $v$.

%
\section{Summary of results}\label{section:summary}

Let $B$ and $L$ be sets in $\cC^n$, and let $\cB$ and $\cL$ be subfamilies of $\cC^n$. For $\alpha \ge 1$, we call $\frac{1}{\alpha} C_B(R,f)$ the \emph{$\alpha$-relaxation} of the cut $C_B(R,f)$. 
Analogously, we call $\frac{1}{\alpha} C_{\cB}(R,f)$ the \emph{$\alpha$-relaxation} of the $\cB$-closure $C_{\cB}(R,f)$. Using $\alpha$-relaxations, the relative strength of cuts and closures can be quantified naturally as follows. For $f \in \R^n \setminus \Z^n$, we introduce the functional 
\begin{equation}
	\label{eq:set_cut_comparison}
\textstyle	\rho_f(B,L) := \inf \setcond{\alpha > 0}{ C_B(R,f) \subseteq \frac{1}{\alpha} C_L(R,f)  \ \forall R}.
\end{equation}

The value $\rho_f(B,L)$ quantifies up to what extent $C_B(R,f)$ can `replace' $C_L(R,f)$ when $R$ varies. For $\alpha \ge 1$, the inclusion $C_B(R,f) \subseteq \frac{1}{\alpha} C_L(R,f)$ says that the cut $C_B(R,f)$ is at least as strong as the $\alpha$-relaxation of the cut $C_L(R,f)$. For $\alpha < 1$, the previous inclusion says that not just $C_B(R,f)$ but also the $\frac{1}{\alpha}$-relaxation of the cut $C_B(R,f)$ is at least as strong as the cut $C_L(R,f)$. Thus, if $\rho_f(B,L) < 1$, the $B$-cut of $(R,f)$ is stronger than the $L$-cut of $(R,f)$ for every $R$, and the value $\rho_f(B,L)$ quantifies how much stronger it is. If $1 < \rho_f(B,L) < \infty$, then the $B$-cut of $(R,f)$ is not stronger than the $L$-cut of $(R,f)$ but stronger than the $\alpha$-relaxation of the $L$-cut for some $\alpha>0$ independent of $R$, where the value $\rho_f(B,L)$ quantifies  up to what extent the $L$-cut should be relaxed. If $\rho_f(B,L) = \infty$, then $C_B(R,f)$ cannot `replace' $C_L(R,f)$ because there is no $\alpha \ge 1$ independent of $R$ such that $C_B(R,f)$ is stronger than the $\alpha$-relaxation of $C_L(R,f)$. 

In addition to comparing the cuts coming from two sets $B$ and $L$, we want to compare the relative strength of the family $\cB$ to the single set $L$, and the relative strength of the two families $\cB$ and $\cL$. We consider these comparisons when $f$ is fixed or arbitrary. For the case of a fixed $f \in \R^n \setminus \Z^ n$, we introduce the functional
\begin{equation*}
\textstyle	\rho_f(\cB,L) := \inf \setcond{\alpha > 0}{C_\cB(R,f) \subseteq \frac{1}{\alpha} C_L(R,f)\ \forall R},
\end{equation*}
which compares $\cB$-closures to $L$-cuts for a fixed $f$. We also introduce the functional \begin{equation*}
\textstyle	\rho_f(\cB,\cL) : = \inf \setcond{\alpha > 0}{C_\cB(R,f) \subseteq \frac{1}{\alpha} C_\cL(R,f)\ \forall R},
\end{equation*}
which compares $\cB$-closures to $\cL$-closures for a fixed $f$.  

The analysis of $\rho_f(\cB,\cL)$ can be reduced to the analysis of $\rho_f(\cB,L)$ for $L \in \cL$, since one obviously has
\begin{equation}
\label{eq:fixed_f_fam_fam}
	\rho_f(\cB,\cL) = \sup \setcond{\rho_f(\cB,L)}{L \in \cL}.
\end{equation}

For the analysis in the case of varying $f$, we introduce two functionals:
\begin{align*}
	\rho(\cB,L) & := \sup \setcond{\rho_f(\cB,L)}{f \in \R^n \setminus \Z^n},
	\\ 
	\rho(\cB,\cL) & := \sup \setcond{\rho_f(\cB,\cL)}{f \in \R^n \setminus \Z^n}.
\end{align*}
Observe that
\begin{align}
	\rho(\cB,L) & = \sup\setcond{\rho_f(\cB,L)}{f \in \intr(L)}, 
	\label{eqFamilySetvaryingf}
	\\ \rho(\cB,\cL) & = \sup\setcond{\rho_f(\cB,L)}{f \in \intr(L), \ L \in \cL}
	\label{eqFamilyFamilyvaryingf}.
\end{align}

For the setting where $\cB$ and $\cL$ are families of lattice-free sets, the functional $\rho(\cB,\cL)$ was introduced in \cite[\S~1.2]{ACGT2014}, where the authors initiated a systematic study for the case of $n=2$. In the case that $(R,f)$ is rational, since $C_{\cL_\ast^n}(R,f)=\corn(R,f)$, the value $\rho(\cB,\cL_\ast^n)$ describes how well $C_\cB(R,f)$ approximates $\corn(R,f)$. 


We are interested in using the functionals $\rho(\cB, \cL)$ and $\rho_f(\cB, \cL)$ to analyze how the strength of $\cL_i^n$ changes as $i$ grows and to compare $\cL_i^n$ with $\cL_\ast^n$ for different choices of $i$. In the trivial case $i=1$, the family $\cL_i^n$ is empty. So, it suffices to consider the case $i \ge 2$. For $i \ge 2^n$, well-known results on lattice-free sets yield $\rho(\cL_i^n,\cL_\ast^n)=1$, which roughly means that, in the context of Questions~\ref{ques:one} and~\ref{ques:two}, $\cL_i^n$ is as good as $\cL_\ast^n$ whenever $i \ge 2^n$. Thus, we can restrict our attention to the families $\cL_i^n$ with $2 \le i \le 2^n$, where $\cL_{i}^n$ with $i=2^n$ can be viewed as a `replacement' for $\cL_\ast^n$. Our first main result examines strength in the context of the functional $\rho(\cB, \cL)$.

\begin{theorem}
	\label{thmRelativeStrength}
	Let $i\in \N$ be such that $2\leq i\leq 2^n$. If $i\leq 2^{n-1}$, then $\rho(\cL_i^n,\cL_{i+1}^n)=\infty$. If $i>2^{n-1}$, then $\rho(\cL_i^n, \cL_\ast^n) \le 4 \Flt(n)$, where $\Flt(n)$ is the so-called flatness constant.\footnote{The flatness constant $\Flt(n)$ will be introduced in Section~\ref{sec:lf_sets}.}
\end{theorem}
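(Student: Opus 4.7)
The plan splits naturally into the two cases of the theorem.

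\emph{Upper bound case ($i > 2^{n-1}$).} Using \eqref{eqFamilyFamilyvaryingf}, it suffices to show $\rho_f(\cL_i^n, L) \le 4\Flt(n)$ for every lattice-free polyhedron $L$ and every $f \in \intr(L)$. A direct gauge-function calculation, based on the explicit form $C_B(R,f)=\{s\ge 0:\sum_i \psi_{B-f}(r_i)s_i\ge 1\}$ and the monotonicity $B_1\subseteq B_2 \Rightarrow \psi_{B_1-f}\ge \psi_{B_2-f}$, shows that for a single witness $B\in \cL_i^n$ the inclusion $C_B(R,f) \subseteq \tfrac{1}{4\Flt(n)}C_L(R,f)$ holding for every $R$ is equivalent to the set containment
\[
	\tfrac{1}{4\Flt(n)}(L-f)+f \;\subseteq\; B.
\]
Thus the task becomes geometric: for each such $L$ and $f$, produce a lattice-free polyhedron $B$ with at most $2^{n-1}+1$ facets containing the shrunken set $L' := \tfrac{1}{4\Flt(n)}(L-f)+f$.

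The construction of $B$ relies on the flatness theorem to select a primitive lattice direction $w \in \Z^n$ with $\operatorname{width}_w(L) \le \Flt(n)$, and hence $\operatorname{width}_w(L') \le 1/4$. Exploiting this narrowness, I would build $B$ by combining two ingredients: an $(n-1)$-dimensional lattice-free polyhedron supported in a hyperplane parallel to $w^\perp$, contributing at most $2^{n-1}$ facets by the well-known bound on facet counts of maximal lattice-free sets in $\R^{n-1}$, together with one additional clipping halfspace transverse to $w$ that removes the lattice points which the unbounded cylinder over the $(n-1)$-dimensional piece would otherwise contain. The generous factor $4$ in $4\Flt(n)$ is what allows the clip to be placed no matter where $w\cdot f$ sits relative to the integer lattice, yielding a bound uniform in $(L,f)$.

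\emph{Lower bound case ($i \le 2^{n-1}$).} It suffices, again by \eqref{eqFamilyFamilyvaryingf}, to exhibit one lattice-free polyhedron $L^* \in \cL_{i+1}^n$ and one $f^* \in \intr(L^*)$ for which $\rho_{f^*}(\cL_i^n, L^*) = \infty$. I would take $L^*$ to be an irreducible maximal lattice-free polyhedron with exactly $i+1$ facets (which exists because $i+1 \le 2^{n-1}+1 \le 2^n$), each facet carrying a distinguished lattice point $\ell_j$ in its relative interior. For each scaling parameter $M\ge 1$, take $R_M$ to have columns $r_j = \ell_j - f^*$ and exhibit $s^* \in \R_{\ge 0}^{i+1}$ with $\sum_j s^*_j < 1/M$ that nevertheless satisfies $\sum_j s^*_j\, \psi_{B-f^*}(r_j) \ge 1$ for every $B \in \cL_i^n$. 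Because $\psi_{L^*-f^*}(r_j)=1$ for all $j$, the point $s^*$ violates the $\tfrac{1}{M}$-relaxation of the $L^*$-cut. The heart of this step is a combinatorial argument showing that when $L^*$ is chosen irreducibly, every lattice-free polyhedron with at most $i$ facets must leave at least one of the distinguished lattice points $\ell_j$ far enough outside to force $\psi_{B-f^*}(r_j)$ to be large, so that $s^*$ can place all its weight on that coordinate.

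\emph{Main obstacle.} The heart of the argument lies in the upper bound construction. The candidate $w$-direction cylinder over an $(n-1)$-dimensional lattice-free polyhedron is never itself lattice-free in $\R^n$, since it contains whole lattice hyperplanes; the transverse clip must therefore be placed with care, and its feasible placement depends delicately on the fractional part of $w\cdot f$ and on how the projection of $L'$ sits inside the $(n-1)$-dimensional lattice. Establishing that the shrinkage by $4\Flt(n)$ always leaves enough geometric slack for such a placement, uniformly over $L$ and $f$, is the technical core of the proof and is precisely where the constant $4$ is pinned down.
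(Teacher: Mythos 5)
Your lower-bound strategy has a fatal flaw: no single fixed pair $(L^*,f^*)$ can satisfy $\rho_{f^*}(\cL_i^n,L^*)=\infty$. Indeed, $\rho_{f^*}(\cL_i^n,L^*)\le\inf_{B\in\cL_i^n}\rho_{f^*}(B,L^*)$, and by Proposition~\ref{propGeomRelativeStrength} the right-hand side is finite as soon as one $B\in\cL_i^n$ contains \emph{some} homothetic shrinking $\frac{1}{\alpha}(L^*-f^*)+f^*$. Such a $B$ always exists already among splits: pick $u\in\Z^n$ with $u\cdot f^*\notin\Z$ (possible since $f^*\notin\Z^n$), so that a lattice-free split $S$ contains $f^*$ in its interior; if $L^*$ is bounded then $\frac{1}{\alpha}(L^*-f^*)+f^*\subseteq S$ for large $\alpha$, and if $L^*$ is an unbounded maximal lattice-free set one uses Theorem~\ref{thmLovasz}(c) to choose $S$ as a split times the lineality directions, so that it also absorbs $\rec(L^*)$. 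Concretely, against $B=S$ your fixed $s^*$ fails: the numbers $\psi_{S-f^*}(\ell_j-f^*)$ are finite constants independent of $M$, so for large $M$ the point $s^*$ is not even in $C_S(R_M,f^*)\supseteq C_{\cL_i^n}(R_M,f^*)$, and the hoped-for combinatorial claim (``every $B$ with at most $i$ facets leaves some $\ell_j$ far outside'') is false. This is not an accident of your construction: Theorem~\ref{thmRelativeStrength_fixed_f} shows that for rational fixed $f$ and $i>n$ even $\rho_f(\cL_i^n,\cL_*^n)$ is finite, so for $n<i\le 2^{n-1}$ any witness of $\rho(\cL_i^n,\cL_{i+1}^n)=\infty$ must let $f$ (and $L$) move with the target accuracy. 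That is exactly what the paper does (Proposition~\ref{prop:infinite_approx_f}): for each $\mu\in(0,1)$ it builds a pyramid $L'$ with $i+1$ facets and a point $f$, \emph{both depending on $\mu$}, such that no lattice-free polyhedron with at most $i$ facets contains $\mu L'+(1-\mu)f$, and then invokes the Qualitative One-for-all Theorem~\ref{thmFinitenessFamilyFamily}(b) (using $f$-closedness of $\cL_i^n$, Proposition~\ref{prop:i_hedral_closed}) to convert this geometric statement into $\rho=\infty$. Note that your direct cut-space argument silently needs this nontrivial one-for-all step as well: a priori the closure $C_{\cL_i^n}(R,f)$ could be collectively strong even though no single $B$ contains the shrunk copy.

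For the upper bound, your reduction is the paper's: by Proposition~\ref{propGeomRelativeStrength} and the trivial half of the one-for-all inequality it suffices, for each $L\in\cL_*^n$ and $f\in\intr(L)$, to exhibit one lattice-free $B$ with at most $2^{n-1}+1$ facets containing $\frac{1}{4\Flt(n)}(L-f)+f$, and the flat-direction idea matches Proposition~\ref{prop:finite_approx_f_arb}. However, the construction itself is where the content lies, and your sketch does not supply it: prisms over an $(n-1)$-dimensional lattice-free set $D$ clipped by a \emph{single} transverse halfspace remain unbounded in the other $w$-direction and are not lattice-free, while clipping on both sides costs $2^{n-1}+2$ facets. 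The paper's Lemma~\ref{prop:lifting} resolves this by separating the complement halfspaces of $D$ from a shrunk copy $L'$, intersecting with the slab between consecutive lattice hyperplanes, and, when both slab facets are genuine facets, replacing the relevant truncated cone by the cone $\conv(\{x\}\cup M')$ of Lemma~\ref{lemma:tr_pyr}(b), whose apex lies on the opposite lattice hyperplane; the verification that the parameter $\mu$ there is at least $1/3$ is precisely where the factor $4$ is pinned down. You correctly flag this as the main obstacle but leave it unresolved, so as written the upper bound is a plan rather than a proof.
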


From Theorem~\ref{thmRelativeStrength}, we immediately see that 
	\begin{itemize}
		\item for all natural numbers $2 \le i < j$, the value $\rho(\cL_i^n,\cL_j^n)$ is infinite if $i \le 2^{n-1}$, and finite, otherwise. 
		\item for every natural number $i \ge 2$, the value $\rho(\cL_i,\cL_\ast^n)$ is infinite for $i \le 2^{n-1}$, and finite, otherwise. 
	\end{itemize}
	Moreover, since the flatness-constant $\Flt(n)$ is known to be polynomially bounded in $n$, the values of $\rho(\cL_i^n,\cL_j^n)$ and $\rho(\cL_i,\cL_\ast^n)$ are polynomially bounded in $n$ whenever they are finite.

Let $f\in \R^n\setminus \Z^n$. Our second main result examines strength in the context of the functional $\rho_f(\cB, \cL)$. Since $\rho_f(\cL_i^n, \cL_*^n)\leq \rho(\cL_i^n, \cL_*^n)$ by definition of $\rho(\cB, \cL)$, Theorem~\ref{thmRelativeStrength} immediately implies that $\rho_f(\cL_i^n, \cL_*^n)\leq4\Flt(n)$ for $i >2^{n-1}$. The following theorem shows that in the case when $f$ is rational, the finiteness $\rho_f(\cL_i^n, \cL_*^n)<\infty$ holds for every $i >n$ (that is, already starting from $i=n+1$). Given $u \in \Q^n \setminus \{0\}$, we define the \emph{denominator of $u$} to be the minimum $s \in \N$ such that $s u\in \Z^n$.

\begin{theorem}
\label{thmRelativeStrength_fixed_f}
Let $i\in \N$ be such that $2\leq i\leq 2^n$. Let $f\in \Q^n\setminus \Z^n$, and let $s$ be the denominator of $f$. If $i\leq n$, then $\rho_f(\cL_i^n,\cL_{i+1}^n ) = \infty$. If $i>n$, then $\rho_f(\cL_i^n, \cL_\ast^n) < \Flt(n)4^{n-1}s$.
\end{theorem}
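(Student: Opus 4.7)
The theorem asserts a dichotomy at $i = n$, so I would prove its two halves separately.

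For the $\rho_f = \infty$ claim when $i \leq n$, the plan is to exploit a rigidity: every full-dimensional polyhedron in $\R^n$ with at most $n$ facets must be unbounded, since a bounded polytope requires at least $n+1$ facets. Thus every $B \in \cL_i^n$ has a nontrivial recession cone $\rec(B)$. I would fix a lattice-free simplex $L \in \cL_{n+1}^n \subseteq \cL_{i+1}^n$ with $f \in \intr(L)$ and vertices $v_0,\dots,v_n$; with the columns of $R$ including $r_j := v_j - f$, the $L$-cut becomes the half-space $\{s \geq 0 : \sum_j s_j \geq 1\}$. For each $M \geq 1$, the plan is to augment $R$ with enough additional directions so that every candidate $B \in \cL_i^n$ has some column of $R$ in $\rec(B)$; on that column the gauge $\psi_{B-f}$ vanishes, so one can exhibit a single point $s$ with $\sum_j s_j < 1/M$ satisfying every $B$-cut simultaneously, producing $\rho_f(\cL_i^n,L) \geq M$. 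The main subtlety here is a uniformity argument: the set of recession cones of lattice-free polyhedra with $f \in \intr$ and at most $n$ facets is ``small enough''—coverable by finitely many narrow cones—so that a finite matrix $R$ (depending on $M$) suffices.

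For the finiteness claim when $i \geq n+1$, I would reduce to $i = n+1$ by monotonicity of $\cL_i^n$ in $i$. Using the identity \eqref{eq:fixed_f_fam_fam}, my plan is to construct, for each $L \in \cL_\ast^n$ with $f \in \intr(L)$, a lattice-free polyhedron $B \in \cL_{n+1}^n$ with $f \in \intr(B)$ and $B \supseteq \frac{1}{\alpha}(L-f) + f$ for $\alpha := \Flt(n)\cdot 4^{n-1}\cdot s$. This inclusion yields $\psi_{B-f} \leq \alpha\,\psi_{L-f}$ pointwise, whence $C_{\cL_{n+1}^n}(R,f) \subseteq C_B(R,f) \subseteq \frac{1}{\alpha} C_L(R,f)$ for every $R$, giving the required bound. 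Thus the theorem reduces to a purely geometric sandwich: a shrunken copy of an arbitrary lattice-free polyhedron must fit inside a lattice-free polyhedron with only $n+1$ facets.

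To produce such a $B$, I would proceed by induction on $n$. At the inductive step I would invoke the flatness theorem to find an integer direction $v$ in which $L$ has lattice width at most $\Flt(n)$, placing $L$ inside a lattice-free slab with two facets normal to $v$. Picking a lattice hyperplane $H$ meeting the shrunken body near $f$, projecting onto $H$, and applying the inductive hypothesis in $\R^{n-1}$ would yield a lattice-free polyhedron of $n$ facets in $H$ containing the projection up to a controlled factor; lifting these $n$ facets orthogonally along with the two slab facets would give an $(n+2)$-facet lattice-free set, which I would then reduce to $n+1$ facets by merging two into one via a convex combination justified by the bounded width in direction $v$. The main obstacle is tracking the constant cleanly through the recursion: a factor of $4$ must be absorbed at each level (roughly one doubling for the projection step and one for the orthogonal lift), yielding the $4^{n-1}$ term; the denominator $s$ propagates from $f$ through projections onto lattice hyperplanes, and the $\Flt(n)$ factor enters via flatness at the base. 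Verifying that the lifted polyhedron remains lattice-free, and that the merge-two-facets step preserves both lattice-freeness and the containment with the claimed constant, is the most delicate portion of the plan.
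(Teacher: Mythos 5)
The inapproximability half of your plan fails at its starting point. You fix a single lattice-free simplex $L \in \cL_{n+1}^n$ with $f \in \intr(L)$ and then try to show $\rho_f(\cL_i^n,L) \ge M$ for every $M$. But for a fixed bounded $L$ and a fixed $f \in \Q^n \setminus \Z^n$ this quantity is finite: pick a fractional coordinate $f_j \notin \Z$ and take the split $B = \setcond{x}{\lfloor f_j \rfloor \le x_j \le \lceil f_j \rceil} \in \cL_2^n \subseteq \cL_i^n$; since $f \in \intr(B)$ and $\mu L + (1-\mu) f$ shrinks to $\{f\}$ as $\mu \to 0$, we get $B \supseteq \mu L + (1-\mu) f$ for small $\mu > 0$, hence $\rho_f(\cL_i^n,L) \le \rho_f(B,L) \le 1/\mu < \infty$ by Proposition~\ref{propGeomRelativeStrength}. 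So once $M$ exceeds this finite value, no matrix $R$ and point $s$ of the kind you describe can exist. The infinity of $\rho_f(\cL_i^n,\cL_{i+1}^n)$ comes only from letting the witness $L$ vary with the target factor, which is exactly what the paper's Lemma~\ref{lem:non:approx:fixed:f} provides (a simplex whose apex collapses toward $f$ as $\alpha$ grows), combined with the one-for-all Theorem~\ref{thmFinitenessFamilyFamily}(a) to pass from single $B$-cuts to the closure; note also that for $i<n$ the witness must lie in $\cL_{i+1}^n$, which contains no $n$-dimensional simplices (the paper uses cylinders over lower-dimensional simplices). Your recession-cone mechanism is moreover inverted: if a column $r$ lies in $\rec(B)$, then $\psi_{B-f}(r) = 0$, so weight on that coordinate contributes nothing toward satisfying the $B$-cut $\sum_i s_i \psi_{B-f}(r_i) \ge 1$; to place a point of small $L$-weighted mass inside every $B$-cut you would need, for each $B$, a column along which $B$ is far thinner than $L$ as seen from $f$, and this fails uniformly (for the split above, $\psi_{B-f} \le C\,\psi_{L-f}$ pointwise for some constant $C$ depending only on $B$, $L$, $f$). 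Finally, even with a correct family of witnesses, your "cover the recession cones by finitely many narrow cones" uniformity step is doing the job of the one-for-all theorem, which the paper proves via $f$-closedness and a Blaschke-selection compactness argument (Theorem~\ref{thmOneForAllFamilies}); your sketch offers no substitute for that.

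The finiteness half is essentially the paper's route: reduce to $i=n+1$, convert the claim into the containment $B \supseteq \frac{1}{\alpha}(L-f)+f$ via Proposition~\ref{propGeomRelativeStrength}, and induct on $n$ using the flatness theorem, with $s$ entering through the case where $f$ is fractional in the flat direction and with a facet-saving lifting step costing a factor $4$ per level (the paper's Lemmas~\ref{lemma:tr_pyr} and~\ref{prop:lifting}). Two cautions if you complete it: the right operation in the inductive step is slicing $L$ with the lattice hyperplane through $f$ (the set $\setcond{x \in \R^{n-1}}{(x,0)\in L}$), not projecting $L$ onto it, since projections of lattice-free sets need not be lattice-free; and the reduction from $n+2$ to $n+1$ facets is not a generic merging of two facets but a truncated-cone argument, which is precisely where the position of $f$ relative to the two slab facets and the factor $4$ must be controlled.
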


In light of Theorems~\ref{thmRelativeStrength} and~\ref{thmRelativeStrength_fixed_f}, upper bounds on $\rho_f(\cL_i^n, \cL_*^n)$ necessarily depend on $f$ for $n<i\leq 2^{n-1}$. An important point to note is that Theorem~\ref{thmRelativeStrength_fixed_f} assumes rationality of $f$. Rationality on $f$ or $R$ is not required for the other results in this paper.  The finite approximation parts of Theorems~\ref{thmRelativeStrength} and~\ref{thmRelativeStrength_fixed_f} are proved in Section~\ref{sec:finite_approx}, and the inapproximability parts are proved in Section~\ref{sec:infinite_approx}. 

Our main tool used in proving Theorems~\ref{thmRelativeStrength} and~\ref{thmRelativeStrength_fixed_f} is Theorem~\ref{thmFinitenessFamilyFamily}, which is an approximation result about general $B$-cuts and $\cB$-closures. We set up some notation necessary to state this result. 

\begin{definition}[$\cB_f$, $\cC_f^n$, and $f$-closed family]\label{def:cC_f} Let $f\in \R^n\setminus \Z^n$, and let $\cB\subseteq \cC^n$. By $\cB_f$ we denote the subfamily of all sets in $\cB$ containing $f$ in the interior. In particular, for $\cB = \cC^n$ we get the family $\cC_f^n$ of all $n$-dimensional, closed, convex sets containing $f$ in the interior. We say that $\cB$ is an $f$-closed family if the family $\setcond{\psi_{B-f}}{B \in \cB_f}$ of gauge functions is closed with respect to pointwise convergence within the family $\{\psi_{C-f}:C \in \cC^n_f\}$.

\end{definition} 
The following theorem provides characterizations of the conditions $\rho_f(\cB,\cL)<\infty$ and $\rho(\cB,\cL)<\infty$ under the topological assumption that $\cB$ is $f$-closed. The reason for introducing such an assumption is the following. The closure $C_\cB(R,f)$ is determined in terms of inequalities defining $B$-cuts, but there are more inequalities valid for $C_\cB(R,f)$ that arise as a limit of sequences of the $B$-cuts for $B \in \cB$. We will see later that such `limit-case inequalities' \emph{must} be taken into consideration for characterizing $\rho_f(\cB,\cL)<\infty$ and $\rho(\cB, \cL)<\infty$. Since the coefficients of the cut-defining inequalities are expressed in terms of gauge functions, we need a topology on $\cC^n$ that corresponds to convergence of gauge functions. Later, we will introduce this topology using an appropriate metric, but for the purpose of formulating Theorem~\ref{thmFinitenessFamilyFamily}, it is enough to use the notion of an $f$-closed family. 

\begin{theorem}[Qualitative One-for-all Theorem for two families]
	\label{thmFinitenessFamilyFamily}
	Let $\cB\subseteq \cC^n$. Let $\cL$ be a subfamily of polyhedra in $\cC^n$, and suppose there exists some $m\in \N$ such that every set $L\in \cL$ has at most $m$ facets. 
	Then the following hold:
	\begin{enumerate}[(a)]
		\item Suppose $\cB$ is $f$-closed for a fixed $f \in \R^n \setminus \Z^n$. Then $\rho_f(\cB,\cL)<\infty$ if and only if there exists $\mu\in (0,1)$ such that for every $L \in \cL_f$, some $B \in \cB$ satisfies $B\supseteq \mu L + (1-\mu) f$. 
		\item Suppose $\cB$ is $f$-closed for all $f\in \R^n\setminus \Z^n$. Then $\rho(\cB,\cL) < \infty$ if and only if there exists $\mu\in (0,1)$ such that for every $f\in \R^n\setminus \Z^n$ and every $L \in \cL_f$, some $B \in \cB$ satisfies $B\supseteq\mu L + (1-\mu) f$. 
	\end{enumerate}
	
\end{theorem}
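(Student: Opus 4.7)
The ``if'' direction of both parts is the easy half. Suppose $\mu \in (0,1)$ satisfies the stated covering property (for fixed $f$ in part (a), for every $f \in \R^n \setminus \Z^n$ in part (b)). For each $L \in \cL_f$, pick the guaranteed witness $B \in \cB$ with $B - f \supseteq \mu(L - f)$. Via the gauge correspondence this is equivalent to the pointwise bound $\psi_{B-f}(r) \leq \mu^{-1} \psi_{L-f}(r)$ for every $r$. Substituting into the definitions of $C_B$ and $C_L$ and using $C_\cB(R,f) \subseteq C_B(R,f)$ yields $\rho_f(\cB, L) \leq \mu^{-1}$ for each $L$, whence $\rho_f(\cB, \cL) \leq \mu^{-1} < \infty$ (the inclusion for $L \notin \cL_f$ is automatic since $C_L(R,f) = \R^k_{\geq 0}$ in that case).

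For the harder direction of (a), assume $\rho_f(\cB, \cL) = \alpha < \infty$ and fix $L \in \cL_f$; I treat the bounded case, the unbounded one being a minor extension using extreme rays of $\rec(L - f)$. The inclusion $C_\cB(R, f) \subseteq \frac{1}{\alpha} C_L(R, f)$, valid for arbitrary $R = (r_1, \dots, r_k)$, is precisely the statement that the semi-infinite linear program
\begin{equation*}
\min \sum_{i=1}^{k} s_i \psi_{L-f}(r_i) \quad \text{s.t.} \quad \sum_{i=1}^{k} s_i \psi_{B-f}(r_i) \geq 1 \ \forall B \in \cB_f,\ s \geq 0,
\end{equation*}
has optimum at least $1/\alpha$. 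Here the $f$-closedness of $\cB$ enters decisively: because gauges that are uniformly bounded on the unit sphere are uniformly Lipschitz, Arzel\`a--Ascoli combined with $f$-closedness guarantees that the set $\{(\psi_{B-f}(r_i))_{i=1}^{k} : B \in \cB_f\} \subseteq \R^{k}$ is closed. Strong LP duality then produces nonnegative weights $\{\lambda_B\}$, of finite support of size at most $k$ by basic feasibility of the dual, with $\sum_B \lambda_B \geq 1/\alpha$ and $\sum_B \lambda_B\, \psi_{B-f}(r_i) \leq \psi_{L-f}(r_i)$ for each $i$.

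Specialize $R = (v_1, \dots, v_p)$ to the vertices of $L - f$, so that $\psi_{L-f}(v_j) = 1$. The dual inequalities become $\sum_B \lambda_B \, \psi_{B-f}(v_j) \leq 1$ with support size at most $p$. Pigeonhole on the total weight gives some $B^* \in \cB_f$ in the support with $\lambda_{B^*} \geq 1/(p\alpha)$, forcing $\psi_{B^* - f}(v_j) \leq p\alpha$ for every $j$. Sublinearity of $\psi_{B^* - f}$ combined with $L - f = \conv\{v_1, \dots, v_p\}$ upgrades this to $\psi_{B^* - f}(r) \leq p\alpha \cdot \psi_{L-f}(r)$ for every $r \in \R^n$, equivalently $B^* \supseteq \mu L + (1-\mu) f$ with $\mu = 1/(p\alpha)$.

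Uniformity of $\mu$ across $\cL_f$ follows because any polytope with at most $m$ facets in $\R^n$ has at most $p_{\max}(m, n)$ vertices (upper bound theorem), giving $\mu \geq (p_{\max}(m,n)\cdot \alpha)^{-1}$. The principal obstacle is the semi-infinite LP duality step, specifically the attainment of the dual supremum by finitely-supported weights indexed by actual members of $\cB_f$; this is exactly what the $f$-closedness hypothesis encodes via the compactness argument above. Part (b) follows from the same argument applied for each $f$, noting that $\mu$ depends only on $n$, $m$, and $\alpha$, hence is independent of $f$; the assumption that $\cB$ is $f$-closed for every $f \in \R^n \setminus \Z^n$ makes the single-$f$ argument go through uniformly.
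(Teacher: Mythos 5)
Your ``if'' direction is fine and matches the paper. For the ``only if'' direction your bounded-case strategy (take $R$ to be the vertices of $L-f$ and extract one good $B$) is in the right spirit, but the pivotal step --- strong duality with (near-)attainment for the semi-infinite covering LP --- is asserted rather than proved, and the justification offered does not hold up. The gauges $\psi_{B-f}$, $B\in\cB_f$, are \emph{not} uniformly bounded on the unit sphere (members of $\cB_f$ can be arbitrarily thin around $f$), so the Arzel\`a--Ascoli equicontinuity argument is unavailable; worse, a sequence $B_t\in\cB_f$ whose gauge values converge at the finitely many points $r_1,\dots,r_k$ can still degenerate, with $\psi_{B_t-f}\to+\infty$ in other directions, in which case the limit is not the gauge of any set in $\cC^n_f$ and the $f$-closedness hypothesis (which only speaks of limits \emph{within} $\{\psi_{C-f}:C\in\cC^n_f\}$) gives nothing. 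Even granting closedness of the value set $\{(\psi_{B-f}(r_i))_i\}$, zero duality gap for a semi-infinite LP requires a constraint qualification (closed moment cone or a Slater-type condition) that you do not verify. Notably, the paper shows no duality machinery is needed in the bounded case: if no single $B$ satisfies $B-f\supseteq\alpha'\,\vertices(L-f)$, then the explicit point $\alpha'(1,\dots,1)$ lies in $C_\cB(R,f)$ but not in $\alpha C_L(R,f)$, contradicting $\rho_f(\cB,L)\le 1/\alpha$ --- a two-line primal certificate that replaces your entire dual construction.

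The more serious gap is your dismissal of the unbounded case as ``a minor extension using extreme rays of $\rec(L-f)$.'' This is exactly where the theorem's $f$-closedness hypothesis is indispensable and where the real work lies. If $w$ is a recession direction of $L-f$, then $\psi_{L-f}(w)=0$, so any dual multipliers would have to satisfy $\sum_B\lambda_B\psi_{B-f}(w)\le 0$, forcing every $B$ in the support to contain $w$ in its recession cone; but the paper's split-versus-triangle remark shows that it can happen that \emph{every individual} $B$ in a (non-closed) family has $\rho_f(B,L)=\infty$ while $\rho_f(\cB,L)\le 1$, so the witness $B$ can only be produced as a \emph{limit} of members of $\cB$. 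Concretely, the paper approximates $L$ by the polytopes $L_t=\conv(V\cup(f+tW))$ (with $f\in\intr(L_t)$, $L_t$ increasing, $L_t\farrow L$), applies the bounded case to each $L_t$ to get witnesses $B_t$, and then uses Blaschke's selection theorem on the polars $(B_t-f)^\circ$ together with $f$-closedness of $\cB$ to extract a limit $B\in\cB$ with $B-f\supseteq\alpha'(L-f)$. No finite-$R$ or extreme-ray LP argument substitutes for this limiting step, and your proposal does not supply it. (Your final uniformity observations --- bounding the vertex count by a function of $m$ and $n$, and noting that $\mu$ depends only on $n,m,\alpha$ so part (b) follows --- are fine and parallel the paper's use of Minkowski--Weyl with $|V|+|W|+1\le N(m,n)$.)
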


It is not hard to see that the $\cB$-closure approximates the $\cL$-closure if and only if the $\cB$-closure approximates the $L$-cut for each $L\in \cL$. The somewhat surprising message of Theorem~\ref{thmFinitenessFamilyFamily} is that in order for the $\cB$-closure to approximate an $L$-cut, it is necessary for the $B$-cut from a \textit{single} well-chosen $B\in \cB$ to approximate the $L$-cut. So with a view towards constant factor approximations of $L$-cuts, there is no synergy of all $B$-cuts for $B\in \cB$ that contributes to the approximation. We call results of this type `one-for-all' results. The term \emph{qualitative} in Theorem~\ref{thmFinitenessFamilyFamily} refers to the fact that the result characterizes when the functional values $\rho_f(\cB,\cL)$ and $\rho(\cB,\cL)$ are finite; this is in contrast to the \textit{Quantitative} One-for-all Theorem (see Theorem~\ref{thmOneForAllFamilies}), which gives concrete bounds on $\rho_f(\cB,\cL)$ and $\rho(\cB,\cL)$.
The one-for-all idea is useful when deriving both positive and negative results. In Theorem~\ref{thmFinitenessFamilyFamily}, which is proved in Section~\ref{sec:oneforall}, the above informal message is expressed rigorously in convenient geometric terms.


\section{Basic material}\label{sec:prelims}

In this section, we collect basic results that are used for proving our main results.  
\subsection{Lattice-free sets}\label{sec:lf_sets}


The following result from~\cite{bccz2} is proved using Zorn's lemma.

\begin{proposition} \label{prop:all_in_max}
	Every lattice-free set $B$ in $\R^n$ is a subset of a maximal lattice-free set in $\R^n$. 
\end{proposition}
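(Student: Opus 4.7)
The plan is a standard Zorn's lemma argument on the poset of lattice-free supersets of $B$, ordered by inclusion. I would let $\mathcal{F}$ be the family of all lattice-free sets in $\R^n$ containing $B$, which is nonempty since $B \in \mathcal{F}$, and verify the hypothesis of Zorn's lemma by showing that every chain $\{B_\alpha\}_{\alpha \in I}$ in $\mathcal{F}$ admits an upper bound in $\mathcal{F}$.

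Given such a chain, the natural candidate for the upper bound is $\bar U := \cl\bigl(\bigcup_{\alpha \in I} B_\alpha\bigr)$. The set $U := \bigcup_{\alpha \in I} B_\alpha$ is convex because the chain is totally ordered (any two points lie in a common $B_\alpha$), and it is $n$-dimensional because it contains $B$. Hence $\bar U$ is closed, convex, and $n$-dimensional, i.e., $\bar U \in \cC^n$. It clearly contains every $B_\alpha$, so the only thing left to check is lattice-freeness.

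The crux of the argument, and the step I would write out carefully, is verifying $\intr(\bar U) \cap \Z^n = \emptyset$. Since $U$ is convex with nonempty interior, the standard fact $\intr(\cl(U)) = \intr(U)$ for convex sets reduces this to showing $\intr(U) \cap \Z^n = \emptyset$. Suppose for contradiction that some $z \in \Z^n$ lies in $\intr(U)$. I would pick $n+1$ affinely independent points $p_0, \ldots, p_n \in U$ in a small neighborhood of $z$ with $z$ in the interior of the simplex $\conv\{p_0, \ldots, p_n\}$. Each $p_i$ lies in some $B_{\alpha_i}$, and since $\{B_\alpha\}$ is a chain, there is a single $B_{\alpha^\ast}$ containing all the $p_i$'s. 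By convexity, $B_{\alpha^\ast}$ contains the full-dimensional simplex, so $z \in \intr(B_{\alpha^\ast})$, contradicting the lattice-freeness of $B_{\alpha^\ast}$.

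With $\bar U \in \mathcal{F}$ established as an upper bound for every chain, Zorn's lemma delivers a maximal element $B^\ast \in \mathcal{F}$, which is a lattice-free set containing $B$. Maximality within $\mathcal{F}$ coincides with maximality among all lattice-free sets in $\R^n$: if $B^\ast \subsetneq B'$ for some lattice-free $B'$, then $B' \in \mathcal{F}$ (since $B \subseteq B^\ast \subseteq B'$), contradicting the maximality of $B^\ast$ in $\mathcal{F}$. The main obstacle, and essentially the only nontrivial point, is the simplex-in-a-chain argument showing that an interior lattice point of $\bar U$ would already be an interior lattice point of some $B_\alpha$; the rest is bookkeeping.
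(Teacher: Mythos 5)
Your proof is correct and follows exactly the route the paper indicates: the paper does not spell out an argument but attributes the result to a Zorn's lemma proof (citing the reference where it is proved), and your chain-upper-bound construction via $\cl\bigl(\bigcup_\alpha B_\alpha\bigr)$, with the simplex argument reducing an interior lattice point of the union to an interior lattice point of a single chain member, is the standard way that Zorn's lemma argument is carried out. No gaps; the use of $\intr(\cl(U))=\intr(U)$ for convex sets with nonempty interior is legitimate.
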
 

A characterization of maximal lattice-free sets was given by Lov\'asz~\cite{lovasz}; see also~\cite{MR3027668} and~\cite{bccz}. We say that a transformation $T:\R^n\to \R^n$ is \emph{unimodular} if $T$ is an affine transformation having the form $T(x) = Ux+v$ for a unimodular matrix $U\in \Z^{n\times n}$ and some $v\in \Z^n$. 

\begin{theorem} \label{thmLovasz}
	Let $B$ be a lattice-free set in $\R^n$. Then the following hold:
	\begin{enumerate}[(a)]
		\item $B$ is maximal lattice-free if and only if $B$ is a lattice-free polyhedron and the relative interior of each facet of $B$ contains a point of $\Z^n$.
		\item If $B$ is maximal lattice-free, then $B$ is a polyhedron with at most $2^n$ facets. 
		\item If $B$ is an unbounded maximal lattice-free set, then up to a unimodular transformation, $B$ coincides  with $B' \times \R^k$, where $1 \le k \le n-1$ and $B'$ is a bounded maximal lattice-free subset of $\R^{n-k}$. 
	\end{enumerate}
\end{theorem}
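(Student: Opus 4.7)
The plan is to establish Lovász's theorem by proving part (a) first and then deducing (b) and (c) from it. For part (a), sufficiency is the easy direction: if $B$ is a lattice-free polyhedron with a lattice point in the relative interior of each facet, then any convex set $B'$ strictly containing $B$ would place such a boundary lattice point in $\intr(B')$, contradicting lattice-freeness. For necessity, I would first show that a maximal lattice-free set is a polyhedron. For each lattice point $z \in \Z^n \setminus \intr(B)$ lying near $B$, pick a closed halfspace $H_z$ separating $z$ from $\intr(B)$, so that $B \subseteq \bigcap_z H_z$. The key claim is that finitely many such halfspaces suffice to define $B$: otherwise one could find $x \notin B$ lying in every $H_z$, and then $\conv(B \cup \{x\})$ would remain lattice-free (no new interior lattice point appears), contradicting maximality. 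A compactness argument restricted to bounded regions handles the unbounded case. With $B$ now known to be a polyhedron, the facet condition follows by contraposition: if $\relint(F)$ of some facet $F$ misses $\Z^n$, translating the hyperplane carrying $F$ outward by a small amount yields a strictly larger lattice-free polyhedron.

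For part (b), I would use a parity argument. Part (a) supplies, for each facet $F_i$ of $B$, an integer point $z_i \in \relint(F_i)$. If two distinct facets admit $z_i \equiv z_j \pmod{2}$ componentwise, then the midpoint $(z_i+z_j)/2$ lies in $\Z^n$ and, because $z_i$ and $z_j$ lie in the relative interiors of distinct facets, actually sits in $\intr(B)$, contradicting lattice-freeness. Hence the map sending each facet $F_i$ to the parity class of $z_i$ in $(\Z/2\Z)^n$ is injective, so $B$ has at most $2^n$ facets.

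For part (c), assume $B$ is unbounded and maximal. I would first show that $\rec(B)$ is a linear subspace: any recession direction $r$ with $-r \notin \rec(B)$ would, by maximality and the polyhedral structure from (a), allow a small enlargement of $B$ in the $-r$ direction without introducing an interior lattice point, a contradiction. Next I would argue that the lineality space $\lin(B)$ is rational; an irrational lineality direction combined with full-dimensionality and lattice-freeness would, via a Kronecker-type density argument, force some $z \in \Z^n$ into $\intr(B)$. Once $\lin(B)$ is a rational subspace of dimension $k \geq 1$, a unimodular transformation aligns it with $\{0\} \times \R^k$ and induces a product decomposition $B = B' \times \R^k$ with $B' \subseteq \R^{n-k}$; maximality passes to $B'$, and choosing $k = \dim \lin(B)$ forces $B'$ bounded with $1 \leq k \leq n-1$. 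The main obstacle is the polyhedrality claim in (a): converting local separation data at lattice points into a finite inequality description of $B$ requires careful use of maximality together with a compactness argument. Once this is in place, (b) and (c) reduce to cleaner combinatorial and lattice-geometric arguments.
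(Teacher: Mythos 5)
The paper does not prove this theorem; it quotes it from Lov\'asz (with the complete proofs in the cited references), so your sketch has to be measured against those published proofs --- and the place where they do their real work is exactly where your argument has a genuine gap, namely the polyhedrality claim in (a). Your maximality argument correctly shows that $B=\bigcap_{z} H_z$, where $z$ ranges over $\Z^n\setminus\intr(B)$; but since $B$ is lattice-free this index set is all of $\Z^n$, so the family is always infinite (even for bounded $B$), and an infinite intersection of halfspaces is just a closed convex set. The finiteness step is then argued backwards: if no finite subfamily of $\{H_z\}$ cuts out $B$, you do \emph{not} obtain a single point $x\notin B$ lying in every $H_z$ --- indeed no such point can exist, precisely because the full intersection equals $B$, as you had just shown. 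So the ``otherwise'' branch is vacuous and finiteness is never established. Nor does ``a compactness argument restricted to bounded regions'' rescue the unbounded case: for an unbounded $B$ there are infinitely many lattice points at bounded distance from $B$ (think of sets inside a split), and showing that finitely many separating halfspaces suffice is the heart of the theorem; the complete proofs obtain it only via the recession-cone/lineality analysis (essentially the content of (c)) and/or the flatness theorem, not as a cheap preliminary. For bounded $B$ there \emph{is} an easy finiteness argument, but it is a different one: enclose $B$ in a large lattice box $K$, take separating halfspaces only for the finitely many lattice points of $K$ together with the facet halfspaces of $K$; the resulting polyhedron is lattice-free and contains $B$, hence equals $B$ by maximality. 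Your sketch supplies neither this nor any substitute for the unbounded case.

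The rest is in better shape but leans on the same missing structure. The parity argument for (b) is the standard Doignon-type count and is correct once (a) is available (the midpoint of two points in relative interiors of distinct facets does lie in $\intr(B)$). In (c), the claim that $\rec(B)$ is a linear space is true, but the proof is not a ``small enlargement of $B$ in the $-r$ direction'': the statement one actually needs is that $B+\R r$ is lattice-free whenever $B$ is lattice-free and $r\in\rec(B)$, which for rational $r$ follows by adding integer multiples of $r$ and for irrational $r$ requires a Dirichlet/Kronecker density argument along the tube $\{z+tr:t\ge 0\}+\varepsilon$-ball; your vague small-perturbation claim is not obviously lattice-free and, as written, asserts what must be proved. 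Similarly, your facet-relaxation argument for the necessity of a lattice point in $\relint(F)$ is fine for bounded $B$ but has a gap for unbounded $B$, where the interior lattice points of the relaxed polyhedra can escape to infinity; this is again handled in the literature only after the recession/lineality structure is understood. In short: (b) and the outline of (c) are fine modulo (a), but the polyhedrality of maximal lattice-free sets --- the core of Lov\'asz's theorem --- is not proved by your argument.
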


The bound $2^n$ in Theorem~\ref{thmLovasz}$(b)$ is tight, and moreover, there is a maximal lattice-free polyhedron with $i$ facets for every $i\in \{2,...,2^n\}$ . 

\begin{lemma}
\label{lemma:existence_lattice_free}
Let $i\in \N$ be such that $2\leq i\leq 2^n$. Then there exists a maximal lattice-free polyhedron in $\R^n$ with exactly $i$ facets.
\end{lemma}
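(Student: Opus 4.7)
The plan is to induct on $n$. For the base case $n=1$ only $i=2$ arises, and $[0,1] \subseteq \R$ is maximal lattice-free: its interior $(0,1)$ contains no integer point, and each of its two facets $\{0\}, \{1\}$ is itself a lattice point, so a lattice point trivially lies in each facet's relative interior. For the inductive step, I fix $n \geq 2$ and $i$ with $2 \leq i \leq 2^n$ and split on the size of $i$.

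When $2 \leq i \leq 2^{n-1}$, I would invoke the inductive hypothesis to obtain a maximal lattice-free polyhedron $B' \subseteq \R^{n-1}$ with exactly $i$ facets, and set $B := B' \times \R$. The interior $\intr(B') \times \R$ contains no integer point, since any such $(v,t) \in \Z^n$ inside would force $v \in \intr(B') \cap \Z^{n-1} = \emptyset$. Each facet of $B$ has the form $F \times \R$ for a facet $F$ of $B'$; picking $w \in \relint(F) \cap \Z^{n-1}$ via Theorem~\ref{thmLovasz}(a) applied to $B'$, the point $(w,0) \in \Z^n$ lies in the relative interior of $F \times \R$. Hence Theorem~\ref{thmLovasz}(a) yields that $B$ is maximal lattice-free with exactly $i$ facets, which is consistent with the structural description of unbounded maximal lattice-free sets in Theorem~\ref{thmLovasz}(c).

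The main obstacle is the range $2^{n-1} < i \leq 2^n$, since Theorem~\ref{thmLovasz}(c) forces any such polyhedron to be bounded and rules out a product construction. I would start from the dilated and translated cross-polytope $B_n := (1/2,\ldots,1/2) + (n/2)\conv\{\pm e_1,\ldots,\pm e_n\}$, which has exactly $2^n$ facets: each facet is indexed by a sign vector $\epsilon \in \{\pm 1\}^n$ and contains the lattice point whose $j$-th coordinate is $(1+\epsilon_j)/2 \in \{0,1\}$ in its relative interior (arising as the uniform barycentric combination of the facet's vertices with coefficients $1/n$), while the interior $\{x : \sum_j |x_j - 1/2| < n/2\}$ contains no integer point because $|z_j - 1/2| \geq 1/2$ for every $z_j \in \Z$. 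To achieve any prescribed $i$ in the range $(2^{n-1}, 2^n]$, I would successively merge pairs of adjacent facets of $B_n$ by perturbing a shared vertex so that two previously nonparallel facets become coplanar. I expect the hardest step to be verifying that each perturbation can be chosen small enough both to prevent any lattice point from entering the interior and to keep a lattice point in the relative interior of every remaining (and newly merged) facet, so that Theorem~\ref{thmLovasz}(a) continues to certify maximal lattice-freeness at every stage of the merging process.
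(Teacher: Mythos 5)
Your base case and the cylinder step for $2 \le i \le 2^{n-1}$ are fine, and your $B_n$ is indeed a maximal lattice-free body with exactly $2^n$ facets. The gap is the facet-merging process for the range $2^{n-1} < i \le 2^n$, which you explicitly leave unverified -- and this is not a side issue but the crux of the lemma: your induction for the lower range in dimension $n$ needs the full range in dimension $n-1$, including its upper half, so the merging step is required at every dimension $\ge 2$. As described (``perturbing a shared vertex so that two previously nonparallel facets become coplanar,'' with perturbations ``chosen small enough''), the step cannot work. Two adjacent facets $F_\epsilon, F_{\epsilon'}$ of $B_n$, indexed by sign vectors differing in the $j$-th entry, have outer normals $\epsilon$ and $\epsilon'$ meeting at the fixed angle $\arccos\frac{n-2}{n} > 0$, so no arbitrarily small move of any vertex can make them coplanar: merging is inherently a non-small modification, contradicting the ``small enough'' framing. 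Moreover, if the two facets are to become coplanar while every vertex except one shared ridge vertex $v_m = (\tfrac12,\ldots,\tfrac12)+\tfrac n2 \epsilon_m e_m$ stays put, the common hyperplane must contain the two apexes $(\tfrac12,\ldots,\tfrac12)\pm\tfrac n2 e_j$ and the remaining ridge vertices -- $n$ affinely independent points all lying in $\{x \in \R^n : x_m = \tfrac12\}$ -- so the merged facet sits in a hyperplane containing no integer point, and Theorem~\ref{thmLovasz}(a) can no longer certify maximality. The alternative of moving a non-shared apex onto the other facet's hyperplane is a large move that simultaneously tilts the $2^{n-1}-1$ other facets through that apex, after which lattice-freeness and a certificate lattice point in the relative interior of every affected facet would have to be re-established at each of up to $2^{n-1}-1$ successive merges; nothing in your proposal does this, and it is exactly the hard content of the lemma.

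For comparison, the paper avoids induction and perturbation entirely: it picks $i$ pairwise disjoint faces $F_1,\ldots,F_i$ of the cube $[0,1]^n$ that together cover all $2^n$ cube vertices (built by repeatedly splitting a face into two disjoint subfaces, starting from two opposite facets), and defines $B$ by the $i$ corresponding valid inequalities of the cube. Disjointness of the chosen faces places the vertices of $F_j$ in the relative interior of the $j$-th facet of $B$, lattice-freeness follows by pushing any integer point outside $\{0,1\}^n$ toward its nearest cube vertex, and Theorem~\ref{thmLovasz}(a) then applies directly; this construction interpolates in one stroke between the split ($i=2$) and your $B_n$ ($i=2^n$). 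To salvage your route you would need to replace the merging heuristic with an explicit construction of comparable precision.
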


\begin{proof}

We first claim that there exists a family of $i$ pairwise disjoint faces $F_1, \dots,$ $F_i$ of the unit cube $[0,1]^n$ that cover the set $\{0,1\}^n$ of all its vertices.
This can be derived using a constructive argument. Start with a list of two disjoint $(n-1)$-dimensional faces of $[0,1]^n$, say, $[0,1]^{n-1}\times\{0\}$ and $[0,1]^{n-1}\times\{1\}$. While the maintained list of faces is of size strictly less than $i$, pick a face $F$ with $\dim(F)\geq 1$ and replace it with two disjoint faces of $F$ of dimension $\dim(F)-1$. In each iteration, the length of the list grows by one. Thus, after finitely many iterations, a list $F_1, \ldots, F_i$ of $i$ faces with the desired property is constructed. 

For each $j\in [i]$, let $u_j\in \R^n$ and $c_j\in \R$ be such that the inequality $u_j\cdot x\leq c_j$ is valid for $[0,1]^n$ and defines the face $F_j = \{x\in [0,1]^n: u_j\cdot x\leq c_j\}$. We claim that $B = \{x\in \R^n: u_j\cdot x\leq c_j,~j\in [i]\}$ is a lattice-free polyhedron with exactly $i$ facets. By Theorem~\ref{thmLovasz}$(a)$, $B$ is a maximal lattice-free set if $B$ is a lattice-free polyhedron and the relative interior of each facet of $B$ contains a point of $\Z^n$. By construction, $v\in \{0,1\}^n$ satisfies $u_j\cdot v = c_j$ if and only if $v\in F_j$. Since the faces $F_1, \dots, F_i$ were chosen to be pairwise disjoint, the vertices of $F_j$ are contained in the relative interior of the facet of $B$ corresponding to the inequality $u_j\cdot x\leq c_j$. Thus, it remains to show that $B$ is lattice-free. 


Let $z\in \Z^n$. If $z\in\{0,1\}^n$, then $z\not\in \intr(B)$ by construction of $B$. So assume $z\in \Z^n\setminus \{0,1\}^n$. Write $z$ as $z = (z_1, \dots, z_n)$ and let $v = (v_1, \dots, v_n)$ be the point of $\{0,1\}^n$ closest to $z$. That is, for every $t\in [n]$, one has $v_t\in \{0,1\}$ with $v_t=1$ if and only if $z_t\geq 1$. By construction, $p:=(1+\epsilon)v-\epsilon z$ belongs to $[0,1]^n$ if $\epsilon>0$ is sufficiently small. Hence $v$ is in the relative interior of a line segment joining $z$ and $p$. Thus, if $z$ was in $\intr(B)$, then $v$ would be too, which is a contradiction. 
\end{proof}

The finiteness of $\rho(\cL_i^n,\cL_{*}^n)<\infty$ for $i>2^{n-1}$ is shown by combining Theorem~\ref{thmFinitenessFamilyFamily} with the so-called flatness theorem. 
For every nonempty subset $X$ of $\R^n$, the \emph{width function} $w(X, \dotvar) : \R^n \rightarrow [0,\infty]$ of $X$ is defined to be
\[
w(X,u) := \sup_{x \in X} \sprod{x}{u} - \inf_{x \in X} \sprod{x}{u}.
\]
The value 	$w(X) := \inf_{u \in \Z^n \setminus \{0\}} w(X,u)$ is called the \emph{lattice width} of $X$.

\begin{theorem}[Flatness Theorem]\label{thm:flat}
	The value $$\Flt(n) := \sup \setcond{w(B)}{B~\text{is a lattice-free set in}~\R^n}$$
	is finite. 
\end{theorem}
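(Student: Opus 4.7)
The plan is to reduce the theorem to the classical flatness theorem for bounded lattice-point-free convex bodies, which I would invoke as a black box: for each $n \in \N$ there is a constant $c(n) < \infty$ such that every bounded $K \in \cC^n$ with $\intr(K) \cap \Z^n = \emptyset$ satisfies $w(K) \le c(n)$. This is the classical flatness theorem due to Khinchin, whose proof uses nontrivial tools from the geometry of numbers (successive minima, Minkowski's theorems, or John-ellipsoid estimates). That classical result is precisely the main obstacle to a fully self-contained argument, and I would not attempt to reprove it here.

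First, I would observe that it suffices to bound $w(B)$ as $B$ ranges over maximal lattice-free sets in $\R^n$. Indeed, if $B_1 \subseteq B_2$ then $w(B_1, u) \le w(B_2, u)$ for every $u$, so $w(B_1) \le w(B_2)$; combined with Proposition~\ref{prop:all_in_max}, this gives
\[
\sup \setcond{w(B)}{B~\text{lattice-free in}~\R^n} = \sup \setcond{w(B)}{B~\text{maximal lattice-free in}~\R^n}.
\]
Moreover, the lattice width is invariant under unimodular transformations, since such maps preserve $\Z^n$ setwise.

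Next, for a maximal lattice-free $B \in \cC^n$ I would split into two cases using Theorem~\ref{thmLovasz}(c). If $B$ is bounded, the classical flatness theorem applied directly gives $w(B) \le c(n)$. If $B$ is unbounded, then up to a unimodular transformation $B = B' \times \R^k$ with $1 \le k \le n-1$ and $B' \subseteq \R^{n-k}$ a bounded maximal lattice-free set. For $(u,v) \in (\Z^{n-k} \times \Z^k) \setminus \{0\}$, the width $w(B' \times \R^k, (u,v))$ equals $w(B', u)$ when $v = 0$ and equals $+\infty$ otherwise; hence $w(B' \times \R^k) = w(B') \le c(n-k)$ by the classical flatness theorem applied to $B'$.

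Combining both cases, setting $\Flt(n) := \max_{1 \le m \le n} c(m) < \infty$ yields a uniform finite bound on $w(B)$ over all lattice-free $B \subseteq \R^n$. The entire argument is essentially a structural reduction based on Proposition~\ref{prop:all_in_max} and Theorem~\ref{thmLovasz}(c); all of the nontrivial content is concentrated in Khinchin's flatness theorem, which is taken as given.
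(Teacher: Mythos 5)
Your proposal is correct and matches the paper's treatment: the paper does not reprove the classical flatness theorem either, but takes it as known and observes that the equivalence with the lattice-free (hollow) formulation follows from Proposition~\ref{prop:all_in_max} together with Theorem~\ref{thmLovasz}(c), which is exactly your reduction to maximal lattice-free sets and the cylinder case. The only cosmetic difference is which standard formulation of the classical theorem you quote as the black box (hollow bounded bodies versus compact sets disjoint from $\Z^n$), and your version makes the bounded case immediate.
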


The value $\Flt(n)$ is called the flatness constant in dimension $n$. Note that other sources define $\Flt(n)$ as the supremum of $w(K)$ among all compact sets $K\in \cC^n$ with $K\cap \Z^n = \emptyset$. Here we use an equivalent definition using maximal lattice-free sets; the equivalence can be derived using Proposition~\ref{prop:all_in_max} and Theorem~\ref{thmLovasz}$(c)$. The best currently known asymptotic upper bound on the flatness constant is $\Flt(n)\leq Cn^{3/2}$ for some absolute constant $C>0$~\cite{BLPS1999}. Unfortunately, the constant $C$ in the latter estimate is not known explicitly. The somewhat weaker bound of $\Flt(n)\leq n^{5/2}$ (see~\cite[page 317]{barvinok}) has the advantage of being explicit. 


\subsection{Gauge functions and the $f$-metric}\label{sec:gauge_f_metric}

Let $B\in \cC^n$ be such that $0\in\intr(B)$. The gauge function $\psi_B$ of $B$ 
is known to satisfy the following properties
\begin{align}
	\psi_B(r) &\ge0 & &\forall r \in \R^n,\label{eq:gauge_pos}
	\\ \psi_B(r_1+r_2)  &\le \psi_B(r_1) + \psi_B(r_2) & & \forall r_1, r_2 \in \R^n,\label{eq:gauge_subadditive}
	\\ \lambda \psi_B(r) &= \psi_B(\lambda r) &&  \forall r \in \R^n \ \forall \lambda \ge 0,\label{eq:gauge_pos_hom}
	\\ \lambda \psi_B(r) &= \psi_{\frac{1}{\lambda} B} (r) &&  \forall r \in \R^n \ \forall \lambda > 0,\label{eq:gauge_scale_hom}
\end{align}
and the equivalence
\begin{align}
\psi_B(r) & = 0 & & \Leftrightarrow&  r & \in \rec(B).\label{eq:gauge_rec}
\end{align}

Let $f\in \R^n\setminus \Z^n$. Here we introduce the $f$-metric on $\cC_f^n$ corresponding to the topological assumption on the family $\cB$ in Theorem~\ref{thmFinitenessFamilyFamily}. Our definition of the $f$-metric relies on the Hausdorff metric and polar bodies. The {\em Hausdorff metric $d_H$} is defined on the family of nonempty, compact subsets of $\R^n$ as follows: $d_H(A,B)$ is the minimum $\gamma\ge0$ such that $A \subseteq B + D(0, \gamma)$ and $B \subseteq A + D(0, \gamma)$, where $D(0, \gamma)$ is the closed ball of radius $\gamma$ around the origin. For a set $B\subseteq \R^n$, the {\em polar of $B$} is $$B^\circ:= \{r\in \R^n: r\cdot x\leq 1~\forall x\in B\}.$$

We define the \emph{$f$-metric} $d_f$ on $\cC_f^n$ to be
\begin{equation}
\label{metric_defn}
d_f(B_1, B_2) := d_H\left((B_1-f)^\circ, (B_2-f)^\circ\right).
\end{equation}
Since $f$ is in the interiors of $B_1$ and $B_2$, the sets $(B_1-f)^\circ$ and $(B_2-f)^\circ$ are compact, showing that $d_f(B_1,B_2)$ is well-defined. For a family $\cB\subseteq \cC_f^n$, we use the notation $\cl_f(\cB)$ to denote the closure of $\cB$ under the $f$-metric. 

Convergence in the $f$-metric can be expressed in several equivalent ways. In light of Proposition~\ref{prop:f_met_prop}, the $f$-closedness condition on $\cB$ that occurs in Theorem~\ref{thmFinitenessFamilyFamily} can be explained as the closedness of $\cB_f$ in the metric $d_f$. 

\begin{proposition}\label{prop:f_met_prop}
Let $f\in \R^n\setminus \Z^n$. Let $(B_t)_{t=1}^\infty$ be a sequence of sets in $\cC_f^n$ and $B$ a set in $\cC_f^n$. The following are equivalent:

\begin{enumerate}[(i)]
\item $\psi_{B_t-f}$ converges to $\psi_{B-f}$ pointwise, as $t\to\infty$,
\item $\psi_{B_t-f}$ converges to $\psi_{B-f}$ pointwise on the unit sphere, as $t\to\infty$,
\item $\psi_{B_t-f}$ converges to $\psi_{B-f}$ uniformly on the unit sphere, as $t\to\infty$,
\item $(B_t-f)^\circ\harrow (B-f)^\circ$,
\item $B_t \farrow B$.
\end{enumerate}
\end{proposition}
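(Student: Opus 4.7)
The plan is to establish the cyclic chain (v) $\Leftrightarrow$ (iv) $\Leftrightarrow$ (iii) $\Rightarrow$ (ii) $\Leftrightarrow$ (i) $\Rightarrow$ (iii). The two endpoints of the cycle are essentially free. The equivalence (v) $\Leftrightarrow$ (iv) is immediate from the definition \eqref{metric_defn} of the $f$-metric. The equivalence (i) $\Leftrightarrow$ (ii) follows at once from positive homogeneity \eqref{eq:gauge_pos_hom}, since $\psi_{B_t-f}(r) = \|r\| \, \psi_{B_t-f}(r/\|r\|)$ for $r \ne 0$ and both sides vanish at $r = 0$, so convergence on all of $\R^n$ and convergence on the unit sphere carry the same information. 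The implication (iii) $\Rightarrow$ (ii) is trivial.

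For the link (iii) $\Leftrightarrow$ (iv), I would invoke the standard polar--gauge duality: for any $K \in \cC^n$ with $0 \in \intr(K)$, the support function $h_{K^\circ}(u) := \sup_{y \in K^\circ} u \cdot y$ of the compact convex body $K^\circ$ coincides with the gauge function $\psi_K$. Applied with $K = B-f$ and $K = B_t - f$, this reduces the equivalence to the classical fact that Hausdorff convergence of a sequence of nonempty compact convex sets in $\R^n$ is equivalent to uniform convergence of their support functions on the unit sphere (see, for instance, Schneider, \emph{Convex Bodies: The Brunn--Minkowski Theory}).

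The only substantive step is the upgrade (i) $\Rightarrow$ (iii) from pointwise to uniform convergence, where convexity is decisive. Each $\psi_{B_t - f}$ is sublinear, hence convex, and the limit $\psi_{B-f}$ is finite on all of $\R^n$ (since $0 \in \intr(B-f)$) and therefore continuous. I would then cite the classical convex-analysis theorem (Rockafellar, \emph{Convex Analysis}, Theorem~10.8) asserting that pointwise convergence of a sequence of convex functions on an open set $U$ to a function finite and continuous on $U$ forces uniform convergence on every compact subset of $U$. Applied with $U = \R^n$ and restricted to the unit sphere, this delivers (iii).

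I expect the main obstacle to be bookkeeping rather than substance: correctly translating between the three parallel languages (gauges, polars, support functions) and citing the right convex-analysis theorem for the pointwise-to-uniform upgrade. All the ingredients are standard, so the proof amounts to stringing classical results together in the right order, rather than developing new technique.
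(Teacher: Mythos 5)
Your proposal is correct and follows essentially the same route as the paper: identify gauges with support functions of the polars (Schneider, Thm.~1.7.6) to translate Hausdorff convergence into uniform convergence on the sphere (Schneider, Thm.~1.8.11), use positive homogeneity for $(i)\Leftrightarrow(ii)$, and upgrade pointwise to uniform convergence via the classical convexity argument (the paper cites Schneider, Thm.~1.8.12, where you cite Rockafellar, Thm.~10.8 --- the same standard fact). The only cosmetic difference is your cyclic arrangement of the implications versus the paper's pairwise equivalences.
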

\begin{proof}
 The equivalence of $(i)$ and $(ii)$ follows from~\eqref{eq:gauge_pos_hom}. The equivalence of $(iii)$ and $(iv)$ follows from Theorem 1.8.11 in~\cite{rsconvex}, and the fact that the gauge function of a convex set containing the origin in its interior is equal to the support function of its polar (Theorem 1.7.6 in~\cite{rsconvex}). The equivalence of $(ii)$ and $(iii)$ follows from Theorem 1.8.12 in~\cite{rsconvex}. 
 The equivalence of $(iv)$ and $(v)$ follows by definition.
\end{proof}

The following result implies that lattice-free sets form a closed subset under the $f$-metric. 

\begin{proposition}	
	\label{prop:f_metric_properties}
	Let $f\in \R^n\setminus \Z^n$. Let $(B_t)_{t=1}^\infty$ be a sequence of sets in $\cC_f^n$ and $B$ a set in $\cC_f^n$ such that $B_t\farrow B$. Then the following hold:
	\begin{enumerate}[(a)]
	\item If $x\in B_t$ for each $t\in \N$, then $x\in B$.
	\item If $x\not\in \intr(B_t)$ for each $t\in \N$, then $x\not\in \intr(B)$. 
	\end{enumerate}

\end{proposition}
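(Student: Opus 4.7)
The plan is to translate both statements into statements about gauge functions, apply the pointwise-convergence equivalence from Proposition~\ref{prop:f_met_prop}, and then pass to the limit of a one-sided inequality. Since $B_t, B \in \cC_f^n$, the translated sets $B_t - f$ and $B-f$ are closed, convex, and contain the origin in their interior, so their gauge functions are well-defined and satisfy the standard sublevel-set identities
\[
B - f = \{r \in \R^n : \psi_{B-f}(r) \le 1\}, \qquad \intr(B-f) = \{r \in \R^n : \psi_{B-f}(r) < 1\},
\]
and analogously for each $B_t$. These identities follow from properties~\eqref{eq:gauge_pos}--\eqref{eq:gauge_rec} together with the fact that any convex combination of an interior point and a point in the set lies in the interior.

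For part $(a)$, suppose $x \in B_t$ for every $t$. If $x = f$ then $x \in \intr(B) \subseteq B$, and we are done. Otherwise set $r := x - f$. The hypothesis gives $\psi_{B_t - f}(r) \le 1$ for every $t$. By Proposition~\ref{prop:f_met_prop}, the convergence $B_t \farrow B$ yields the pointwise convergence $\psi_{B_t - f}(r) \to \psi_{B - f}(r)$, so taking the limit preserves the inequality and gives $\psi_{B - f}(r) \le 1$. Hence $r \in B - f$, i.e., $x \in B$.

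For part $(b)$, suppose $x \notin \intr(B_t)$ for every $t$. Since $f \in \intr(B_t)$ by definition of $\cC_f^n$, we must have $x \neq f$, so $r := x - f \neq 0$ is again well-defined. The hypothesis now translates into $\psi_{B_t - f}(r) \ge 1$ for every $t$. Pointwise convergence of the gauge functions (Proposition~\ref{prop:f_met_prop}) preserves this weak inequality in the limit, giving $\psi_{B-f}(r) \ge 1$, which means $r \notin \intr(B - f)$ and so $x \notin \intr(B)$.

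The whole argument is routine once one invokes Proposition~\ref{prop:f_met_prop}; the only mildly delicate point is verifying the sublevel-set description of the interior for closed convex sets that are possibly unbounded, but this follows immediately from $0 \in \intr(B-f)$ and the standard line-segment argument. No single step is a serious obstacle.
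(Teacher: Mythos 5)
Your proof is correct and follows essentially the same route as the paper: both translate membership in $B_t$ and $\intr(B_t)$ into the gauge-function inequalities $\psi_{B_t-f}(x-f)\le 1$ and $\psi_{B_t-f}(x-f)<1$, then pass the weak inequality to the limit using the pointwise convergence guaranteed by Proposition~\ref{prop:f_met_prop}. The extra care you take with the case $x=f$ and with the sublevel-set description of the interior is fine but not needed beyond what the paper tacitly assumes.
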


\begin{proof}
Note that $x\in B_t$ is equivalent to $\psi_{B_t-f}(x-f)\leq 1$. Furthermore, $x\in \intr(B_t)$ is equivalent to $\psi_{B_t-f}(x-f)< 1$. Thus, both $(a)$ and $(b)$ follow from Proposition~\ref{prop:f_met_prop}. 
\end{proof}

The next proposition shows that the topological assumptions of Theorem~\ref{thmFinitenessFamilyFamily} are fulfilled when applied to $\cB=\cL_i^n$. 

\begin{proposition}
	\label{prop:i_hedral_closed}
	Let $i\in \N$ and $f\in \R^n\setminus \Z^n$. Then $\cL_i^n$ is $f$-closed.
\end{proposition}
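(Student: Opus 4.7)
The plan is as follows. By Proposition~\ref{prop:f_met_prop}, showing that $\cL_i^n$ is $f$-closed amounts to showing that $(\cL_i^n)_f$ is closed under the $f$-metric: whenever $(B_t)_{t=1}^{\infty} \subseteq (\cL_i^n)_f$ satisfies $B_t \farrow B$ for some $B \in \cC_f^n$, we need $B \in (\cL_i^n)_f$. So I fix such a sequence and limit $B$. The condition $f \in \intr(B)$ is built into $B \in \cC_f^n$, and lattice-freeness of $B$ is immediate from Proposition~\ref{prop:f_metric_properties}(b): each $z \in \Z^n$ satisfies $z \notin \intr(B_t)$ for all $t$, hence $z \notin \intr(B)$. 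So the only real task is to show that $B$ is a polyhedron with at most $i$ facets.

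I would handle this on the polar side. Because $f \in \intr(B_t)$, I can write $B_t - f = \{x : a_{t,j} \cdot x \leq 1,\ j \in [i]\}$, padding with copies of the trivial inequality $0 \cdot x \leq 1$ if $B_t$ has strictly fewer than $i$ facets. A standard computation of the polar of an intersection of half-spaces through the origin then gives
$$
(B_t - f)^\circ = \conv\{0, a_{t,1}, \ldots, a_{t,i}\}.
$$
By definition of $d_f$ we have $(B_t - f)^\circ \harrow (B - f)^\circ$, and since the limit $(B - f)^\circ$ is compact, all the vectors $a_{t,j}$ lie in a common Euclidean ball for $t$ large. A diagonal subsequence argument extracts a subsequence along which $a_{t,j} \to a_j$ for each $j \in [i]$, and the continuity of the finite-point $\conv$ operation under Hausdorff convergence yields $(B - f)^\circ = \conv\{0, a_1, \ldots, a_i\}$. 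Polarizing back gives $B - f = \{y : y \cdot a_j \leq 1,\ j \in [i]\}$, with the $j = 0$ inequality trivial and discardable. Hence $B$ is a polyhedron with at most $i$ facets, and therefore $B \in (\cL_i^n)_f$.

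The only step that requires any care is the extraction of the convergent subsequence of the $a_{t,j}$; this relies on the routine observation that Hausdorff convergence to a compact set forces eventual uniform boundedness, together with the elementary fact that $\conv$ of a bounded number of points is Hausdorff-continuous in the points. Neither is a genuine obstacle, so I do not expect any serious difficulty in turning this sketch into a formal proof.
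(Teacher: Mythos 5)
Your proposal is correct and follows essentially the same route as the paper: pass to the polars $(B_t-f)^\circ$, use Hausdorff convergence to bound the (at most $i$) generating points, extract convergent subsequences, and identify the limit $(B-f)^\circ$ as the convex hull of the limit points, so that $B$ has at most $i$ facets; lattice-freeness comes from Proposition~\ref{prop:f_metric_properties}$(b)$, exactly as the paper intends (it leaves that step implicit). The only cosmetic difference is that you work with normalized facet inequalities $a_{t,j}\cdot x\le 1$ and include $0$ in the convex hull, while the paper writes the polar directly as $\conv(a_{1,t},\ldots,a_{i,t})$.
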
 

\begin{proof}
Let $(B_t)_{t=1}^\infty$ be a sequence of sets in $\cL_i^n\cap \cC_f^n$ and $B$ a set in $\cC_f^n$ such that $B_t\farrow B$. For $t\in \N$, the polyhedron $B_t$ has at most $i$ facets, so $(B_t-f)^\circ$ has at most $i$ vertices. Thus, $(B_t-f)^\circ$ can be written as $(B_t-f)^\circ  =\conv(a_{1,t},.., a_{i,t})$, where $a_{1,t}, \ldots, a_{i,t} \in \R^n$ and some of these vectors may coincide. 
Since the sequence $((B_t-f)^\circ)_{t=1}^\infty$ is convergent in the Hausdorff metric, it is also bounded, and there is a bounded set $U$ in $\R^n$ such that $(B_t-f)^\circ\subseteq U$ for all $t\in \N$. Thus, for every $j\in [t]$, the sequence $(a_{j,t})_{t=1}^\infty$ is bounded. So, there exists a subsequence of $(a_{j,t})_{t=1}^\infty$ that converges to some $a_j\in \R^n$. Hence $\conv(a_1,\ldots,a_i)$ is the limit of $(B_t-f)^\circ$ in the Hausdorff metric. Since $(B_t-f)^\circ\harrow(B-f)^\circ$, the limit $\conv(a_1,\ldots,a_i)$ equals $(B-f)^\circ$. This shows that $B$ has at most $i$ facets. 
\end{proof}


\subsection{Basic properties of cuts and closures}\label{sec:strength}

The following proposition shows that $C_{\cB}(R,f)$ remains unchanged if $\cB$ is replaced by $\cl_f(\cB_f)$. 

\begin{proposition}
	\label{prop:closure_equality}
	Let $\cB\subseteq\cC^n$. Then $
		C_\cB(R,f) = C_{\cl_f(\cB_f)}(R,f)$
		 for every $(R,f)$. 
\end{proposition}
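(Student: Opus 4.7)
The plan is to split the equality into two inclusions after first reducing to the subfamily $\cB_f$ of sets containing $f$ in their interior.

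\textbf{Reduction.} For any $B \in \cB \setminus \cB_f$, the definition of the $B$-cut yields $C_B(R,f) = \R^k_{\geq 0}$, which contributes nothing to the intersection. Therefore $C_\cB(R,f) = C_{\cB_f}(R,f)$, and it suffices to prove $C_{\cB_f}(R,f) = C_{\cl_f(\cB_f)}(R,f)$.

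\textbf{The inclusion $\supseteq$.} This is immediate from monotonicity: $\cB_f \subseteq \cl_f(\cB_f)$, so the intersection over the larger family can only be smaller, giving $C_{\cl_f(\cB_f)}(R,f) \subseteq C_{\cB_f}(R,f)$.

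\textbf{The inclusion $\subseteq$.} This is the only step requiring work, and it is essentially a continuity argument. Fix $s \in C_{\cB_f}(R,f)$, so that
\[
\textstyle \sum_{i=1}^k s_i \, \psi_{B-f}(r_i) \geq 1 \qquad \forall B \in \cB_f.
\]
Given an arbitrary $B' \in \cl_f(\cB_f)$, choose a sequence $(B_t)_{t=1}^\infty$ in $\cB_f$ with $B_t \farrow B'$. By Proposition~\ref{prop:f_met_prop} (equivalence of $(i)$ and $(v)$), $\psi_{B_t-f}(r_i) \to \psi_{B'-f}(r_i)$ for each $i \in [k]$. Since the sum involves only finitely many terms,
\[
\textstyle \sum_{i=1}^k s_i \, \psi_{B'-f}(r_i) \;=\; \lim_{t \to \infty} \sum_{i=1}^k s_i \, \psi_{B_t-f}(r_i) \;\geq\; 1,
\]
so $s \in C_{B'}(R,f)$. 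As $B' \in \cl_f(\cB_f)$ was arbitrary, $s \in C_{\cl_f(\cB_f)}(R,f)$.

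There is no real obstacle here; the only thing to be careful about is the preliminary reduction to $\cB_f$ (so that every set under consideration genuinely contains $f$ in its interior and the gauge functions $\psi_{B-f}$ are well-defined finite-valued sublinear functions) and invoking the correct equivalence from Proposition~\ref{prop:f_met_prop} relating $f$-convergence of sets to pointwise convergence of the associated gauge functions.
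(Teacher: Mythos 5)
Your proof is correct and follows essentially the same route as the paper: the paper's argument is precisely the observation that cut coefficients are gauge values which pass to limits via Proposition~\ref{prop:f_met_prop}, and your write-up simply makes explicit the reduction to $\cB_f$ (degenerate cuts being $\R^k_{\ge 0}$) and the trivial reverse inclusion. No gaps.
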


\begin{proof}
Recall that $B$-cuts are defined by linear inequalities whose coefficients involve the gauge function $\psi_{B-f}$. Since one can pass the coefficients to a limit in these inequalities, Proposition~\ref{prop:f_met_prop} implies that $
		C_\cB(R,f) = C_{\cl_f(\cB_f)}(R,f)$
		 for every $(R,f)$. 
\end{proof}

One might be interested in the relative strength of the family $\cL_i^n\setminus \cL_{i-1}^n$ of lattice-free polyhedra with exactly $i$-facets. It turns out that this family is not $f$-closed, so Theorem~\ref{thmFinitenessFamilyFamily} is not applicable. However, the $f$-closure of $\cL_i^n\setminus \cL_{i-1}^n$ is equal to $\cL_i^n\cap \cC_f^n$, giving further justification to study the relative strength of $\cL_i^n$. It should be pointed out that Proposition~\ref{prop:exactly_i_closed} is not used for proving our main results. 

\begin{proposition}\label{prop:exactly_i_closed}
Let $i\in \N$ and $f\in \R^n\setminus \Z^n$. Then $\cl_f((\cL_i^n\setminus \cL_{i-1}^n)\cap \cC_f^n)= \cL_i^n\cap \cC_f^n$.
\end{proposition}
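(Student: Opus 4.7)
The inclusion $\cl_f((\cL_i^n\setminus\cL_{i-1}^n)\cap\cC_f^n)\subseteq\cL_i^n\cap\cC_f^n$ is immediate from Proposition~\ref{prop:i_hedral_closed}: the family $\cL_i^n$ is $f$-closed, so $\cL_i^n\cap\cC_f^n$ coincides with its own $f$-metric closure in $\cC_f^n$, and it visibly contains the set whose closure appears on the left. The substance of the proof is the reverse inclusion.

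For the reverse inclusion, let $B\in\cL_i^n\cap\cC_f^n$ have exactly $j\leq i$ facets. If $j=i$ there is nothing to show. Otherwise, by iterating an ``add-one-facet'' construction $i-j$ times with perturbation parameters tending to zero and then taking a diagonal sequence, the problem reduces to the following single-step lemma: for $n\geq 2$, every lattice-free polyhedron $P\in\cC_f^n$ with exactly $m$ facets is an $f$-metric limit of lattice-free polyhedra in $\cC_f^n$ with exactly $m+1$ facets.

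To prove the single-step lemma, write $P=\{x:a_k\cdot x\leq b_k,\ k\in[m]\}$ and pick $c\in\R^n\setminus\{0\}$ not parallel to any $a_k$; such $c$ form an open dense subset of $\R^n$ since for $n\geq 2$ the forbidden set is a finite union of lines. Set $M:=\sup_{x\in P}c\cdot x\in\R\cup\{\infty\}$, choose real numbers $d_t\nearrow M$ if $M<\infty$ and $d_t\nearrow\infty$ otherwise, and define $P_t:=P\cap\{x:c\cdot x\leq d_t\}$. For $t$ large enough we have $c\cdot f<d_t$ together with non-redundancy of the new inequality, so $f\in\intr(P_t)$ and lattice-freeness is inherited from $P\supseteq P_t$. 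The new inequality is facet-defining because $f$ lies strictly on one side of $\{c\cdot x=d_t\}$ while some point of $P$ lies strictly on the other, so a convexity argument along the segment between them shows $\{c\cdot x=d_t\}\cap\intr(P)\neq\emptyset$, forcing the new face to be $(n-1)$-dimensional. Each old face $F_k=P\cap\{a_k\cdot x=b_k\}$ survives as a facet of $P_t$: the non-parallelism of $c$ and $a_k$ prevents $F_k$ from lying in any single level set of $c$, so $\inf_{F_k}c\cdot x<M$ (respectively $<\infty$), and $F_k\cap\{c\cdot x<d_t\}$ is nonempty and relatively open in $F_k$ for $t$ large. Hence $P_t$ has exactly $m+1$ facets.

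The convergence $P_t\farrow P$ is checked via Proposition~\ref{prop:f_met_prop} by a direct gauge computation: $\psi_{P_t-f}(r)=\psi_{P-f}(r)$ when $c\cdot r\leq 0$, while $\psi_{P_t-f}(r)=\max\{\psi_{P-f}(r),\,(c\cdot r)/(d_t-c\cdot f)\}$ when $c\cdot r>0$. In the unbounded case the second term tends to $0$; in the bounded case the elementary bound $\psi_{P-f}(r)\geq(c\cdot r)/(M-c\cdot f)$ (coming from the fact that $f+\mu r\in P$ forces $c\cdot(f+\mu r)\leq M$) collapses the maximum to $\psi_{P-f}(r)$ in the limit $d_t\nearrow M$. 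The main technical obstacle I anticipate is the facet-bookkeeping during iteration --- at the $\ell$-th step the new normal must avoid parallelism with the $m+\ell$ facet normals accumulated so far, and one must verify that none of those previously added facets becomes redundant after the next cut. This is handled by always picking $c$ from the complement of a finite union of lines together with the openness of the ``non-degenerate'' condition that keeps each $F_k\cap\{c\cdot x<d_t\}$ nonempty for all sufficiently large $t$.
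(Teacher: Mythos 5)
Your proof is correct, but it goes by a genuinely different route than the paper. The paper's argument is a one-shot construction in the polar: since $(B-f)^\circ$ is a polytope with $m$ vertices, one adds $i-m$ points outside it at distance at most $\epsilon$ so that the enlarged polytope $P$ has exactly $i$ vertices; then $B':=P^\circ+f$ is a subset of $B$ (hence lattice-free with $f$ in its interior), has exactly $i$ facets, and satisfies $d_f(B,B')=d_H((B-f)^\circ,P)\le\epsilon$ by the very definition of the $f$-metric. This makes the convergence statement free (no gauge computation, no appeal to Proposition~\ref{prop:f_met_prop}), produces all $i-m$ new facets at once, and handles unbounded $B$ with no case distinction because the polar is always compact. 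You instead work on the primal side: a generic shallow cut $c\cdot x\le d_t$ adds exactly one facet while preserving the old ones, and you verify $P_t\farrow P$ by an explicit gauge formula, then iterate with a triangle-inequality/diagonal argument. The price is the genericity argument for $c$, the bounded/unbounded case split in the limit computation, and the bookkeeping across iterations (which, as you note, is really absorbed into the single-step lemma, since each application preserves all existing facets); the benefit is a self-contained primal construction that makes the facet count completely explicit and exercises the gauge characterization of $d_f$. One shared caveat: both your cut construction (explicitly, via $n\ge 2$) and the paper's point-adding step (implicitly) require $n\ge 2$ once $i>2$; in $\R^1$ no lattice-free polyhedron has more than two facets, so that boundary case is degenerate for the statement itself and not a defect specific to your argument.
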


\begin{proof}
We show the asserted equality of the two sets by verifying the inclusions `$\subseteq$' and `$\supseteq$'. The inclusion `$\subseteq$' follows from Proposition~\ref{prop:i_hedral_closed}. It is left to show the `$\supseteq$' inclusion. 
To verify `$\supseteq$', we consider an arbitrary $B \in \cL_i^n \cap \cC_f^n$ and $\epsilon>0$, and show the existence of $B' \in (\cL_i^n \setminus \cL_{i-1}^n) \cap \cC_f^n$ with $d_f(B,B') \le \epsilon$. Let $m$ be the number of facets of $B$. Then $(B -f)^\circ$ is a polytope with $m$ vertices. Appropriately adding $i-m$ points outside $B$ and at distance at most $\epsilon$ to $B$, we construct a polytope $P$ that contains $(B-f)^\circ$ as a subset and has exactly $i$ vertices. It follows that $B' := P^\circ + f$ is a polyhedron with exactly $i$ facets and is a subset of $B$. It is well known that $P^{\circ\circ} = P$. Thus, we have $(B' - f)^\circ = P$. This yields $d_f(B,B') = d_H( (B-f)^\circ, P) \le \epsilon$. 
\end{proof}

%
%
%

Propositions~\ref{propRelaxingCut} and~\ref{propStrongerRelCuts} are concerned with basic properties of $B$-cuts. Proposition~\ref{propRelaxingCut} follows from properties~\eqref{eq:gauge_pos}-\eqref{eq:gauge_rec} on gauge functions.

\begin{proposition} \label{propRelaxingCut}
	Let $f\in \R^n\setminus \Z^n$, $B\in \cC^n_f$, and $R\in \R^{n\times k}$ with $k\in \N$. Let $\alpha >0$. Then
	\[
		\frac{1}{\alpha} C_B(R,f) = C_{B'} (R,f),
	\]
	where $B'$ is a set in $\cC_f^n$ defined by
		$$B':= \frac{1}{\alpha} B + \left(1-\frac{1}{\alpha}\right) f.$$ 
\end{proposition}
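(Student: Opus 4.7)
The plan is to unfold the definitions on the left-hand side, apply the positive homogeneity property~\eqref{eq:gauge_scale_hom} of the gauge function, and then verify the algebraic identity $B' - f = \tfrac{1}{\alpha}(B-f)$ that makes the two expressions agree. There are no serious obstacles; the argument is a direct computation using the properties already listed for gauge functions.

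First I would check that $B'$ indeed lies in $\cC^n_f$, so that $C_{B'}(R,f)$ is defined by the gauge formula. Since $B \in \cC^n$ with $f \in \intr(B)$, the set $B - f$ is in $\cC^n$ with $0$ in its interior, and dilating by $1/\alpha > 0$ preserves this. Next, from the Minkowski sum definition of $B'$, one checks directly that
\[
B' - f \;=\; \tfrac{1}{\alpha} B + \bigl(1 - \tfrac{1}{\alpha}\bigr) f - f \;=\; \tfrac{1}{\alpha} B - \tfrac{1}{\alpha} f \;=\; \tfrac{1}{\alpha}(B - f),
\]
so in particular $0 \in \intr(B' - f)$, i.e.\ $f \in \intr(B')$ and $B' \in \cC^n_f$.

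Then I would compare the two cuts coordinatewise. By definition, $s \in \tfrac{1}{\alpha} C_B(R,f)$ iff $\alpha s \in C_B(R,f)$, which (since $\alpha > 0$ preserves nonnegativity) is equivalent to
\[
s \in \R^k_{\geq 0} \quad \text{and} \quad \sum_{i=1}^k s_i \,\bigl(\alpha\,\psi_{B-f}(r_i)\bigr) \;\geq\; 1.
\]
Applying~\eqref{eq:gauge_scale_hom} with the set $B-f$ in place of $B$ gives $\alpha\,\psi_{B-f}(r_i) = \psi_{\frac{1}{\alpha}(B-f)}(r_i)$, and by the identity $B' - f = \tfrac{1}{\alpha}(B-f)$ established above this equals $\psi_{B'-f}(r_i)$. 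Hence the displayed inequality is exactly the defining inequality of $C_{B'}(R,f)$, proving $\tfrac{1}{\alpha} C_B(R,f) = C_{B'}(R,f)$.
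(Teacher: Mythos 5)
Your proof is correct and follows exactly the route the paper intends: the paper dispenses with this proposition by noting it follows from the gauge function properties \eqref{eq:gauge_pos}--\eqref{eq:gauge_rec}, and your computation via $B'-f=\tfrac{1}{\alpha}(B-f)$ and property \eqref{eq:gauge_scale_hom} is precisely that argument spelled out. Nothing is missing; the check that $B'\in\cC_f^n$ is a welcome (if routine) detail.
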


\begin{proposition} \label{propStrongerRelCuts}
	Let $f\in \R^n\setminus \Z^n$ and $B, L\in \cC^n_f$. 
	Then the following conditions are equivalent: 
	\begin{enumerate}[(i)]
		\item $C_B(R,f) \subseteq C_L(R,f)$ holds for every $R$,
		\item $B \supseteq L$,
		\item $\psi_{B-f} \le \psi_{L-f}$.
		\item $\rho_f(B,L)\leq 1$. 
	\end{enumerate} 
\end{proposition}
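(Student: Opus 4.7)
The plan is to establish the cycle (ii) $\Leftrightarrow$ (iii) $\Rightarrow$ (i) $\Rightarrow$ (iii), together with (i) $\Leftrightarrow$ (iv). The equivalence (ii) $\Leftrightarrow$ (iii) is a standard fact about gauge functions: because $f \in \intr(B) \cap \intr(L)$, each of $B-f$ and $L-f$ is a closed convex set containing the origin in its interior, and for any such set $C$ one has $x \in C$ if and only if $\psi_C(x) \le 1$. Applying this characterization (and translating by $f$) gives $L \subseteq B$ if and only if $\psi_{B-f} \le \psi_{L-f}$ pointwise on $\R^n$.

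For (iii) $\Rightarrow$ (i), I would observe directly from the definition of the $B$-cut that $\psi_{B-f} \le \psi_{L-f}$ forces $\sum_{i=1}^k s_i \psi_{B-f}(r_i) \le \sum_{i=1}^k s_i \psi_{L-f}(r_i)$ for every $s \in \R_{\ge 0}^k$ and every $R$, so any $s$ satisfying the first cut inequality at value $\ge 1$ also satisfies the cut inequality for $L$. For (i) $\Rightarrow$ (iii), I would fix an arbitrary $r \in \R^n$ and apply (i) to the single-column matrix $R := (r) \in \R^{n \times 1}$. Using properties~\eqref{eq:gauge_pos}--\eqref{eq:gauge_rec}, the cut $C_B(R,f)$ equals $[1/\psi_{B-f}(r), \infty)$ when $\psi_{B-f}(r) > 0$ and is empty when $\psi_{B-f}(r) = 0$, with an analogous description for $L$. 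A short case analysis on whether each of the two gauge values vanishes then yields $\psi_{B-f}(r) \le \psi_{L-f}(r)$, and since $r$ is arbitrary this is~(iii).

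For (i) $\Leftrightarrow$ (iv), the implication (i) $\Rightarrow$ (iv) is immediate, since (i) makes $\alpha = 1$ feasible in the infimum defining $\rho_f(B,L)$. For the reverse implication, I would invoke Proposition~\ref{propRelaxingCut} to rewrite $\tfrac{1}{\alpha} C_L(R,f) = C_{L_\alpha}(R,f)$ with $L_\alpha := \tfrac{1}{\alpha} L + (1 - \tfrac{1}{\alpha}) f$, and note directly from the gauge-function description of the cut that $\bigcap_{\alpha > 1} \tfrac{1}{\alpha} C_L(R,f) = C_L(R,f)$. Since $\rho_f(B,L) \le 1$ ensures every $\alpha > 1$ is feasible in the infimum, intersecting over all such $\alpha$ yields $C_B(R,f) \subseteq C_L(R,f)$ for every $R$.

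The only step with any genuine subtlety is this final limit argument: one must rule out the possibility that the infimum $\rho_f(B,L)$ equals $1$ but is not attained, and the passage to the intersection over $\alpha > 1$ precisely closes this gap. Every other step is a direct translation between set inclusions, pointwise gauge inequalities, and the defining inequality of the cut.
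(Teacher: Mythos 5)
Your proposal is correct and follows essentially the same route as the paper: the gauge membership characterization for (ii)$\Leftrightarrow$(iii), the definition of the cut for (iii)$\Rightarrow$(i), single-column matrices $R=(r)$ to come back from (i), and the definition of $\rho_f$ for (i)$\Leftrightarrow$(iv). The only cosmetic differences are that you close the cycle via (i)$\Rightarrow$(iii) with an explicit computation of the one-column cut (the paper instead deduces (i)$\Rightarrow$(ii) from ``$1\in C_B((r),f)$ iff $r\notin\intr(B)-f$''), and you spell out the limit argument ruling out an unattained infimum in (iv)$\Rightarrow$(i), which the paper treats as immediate from the definition.
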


\begin{proof} 

The equivalence $(i)\Leftrightarrow(iv)$ follows from the definition of $\rho_f(B,L)$, see~\eqref{eq:set_cut_comparison}. From the definition of gauge functions, we see that for a set $M\in \cC^n$ with $0\in \intr(M)$, the condition $r\in M$ is equivalent to $\psi_M(r)\leq 1$. This gives $(ii)\Leftrightarrow(iii)$. The implication $(iii)\Rightarrow(i)$ follows directly from the definition of $B$-cuts. To conclude the proof, it is sufficient to show that $(i)\Rightarrow (ii)$. If $(i)$ holds, then we get that the condition $1\in C_B((r),f)$ implies $1\in C_L((r),f)$ for every $r\in \R^n$. From the definition of $B$-cuts and gauge functions, we see that $1\in C_B((r),f)$ if and only if $r\not\in \intr(B)-f$. Thus, we obtain $\intr(B)\supseteq \intr(L)$ and therefore $B\supseteq L$. 
\end{proof}

Propositions~\ref{propRelaxingCut} and \ref{propStrongerRelCuts} give the following description of $\rho_f(B,L)$.

\begin{proposition}
	\label{propGeomRelativeStrength} 
	Let $f \in \R^n \setminus \Z^n$ and $B, L\in\cC^n$. Then one has
	\begin{align}
	\rho_f(B,L)  & = \inf \setcond{\alpha > 0}{B \supseteq \frac{1}{\alpha} L + \left(1- \frac{1}{\alpha}\right) f}  \nonumber
	\\ & = \inf \setcond{\alpha>0}{\alpha B + (1-\alpha) f \supseteq L} \nonumber 
	\\ & = \inf \setcond{\alpha > 0}{\psi_{B-f} \le \alpha \psi_{L-f}} & &  \label{eqRelStrengthCutsThroughGaugeFunc}
	\end{align}
	if $f \in \intr(L)$, and $\rho_f(B,L)  = 0$, otherwise. Moreover, if $f\in \intr(L)$ and the infimum in~\eqref{eqRelStrengthCutsThroughGaugeFunc} is finite, then all of the infima are attained for some $\alpha$. 
\end{proposition}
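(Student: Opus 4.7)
The plan is to chain Propositions~\ref{propRelaxingCut} and~\ref{propStrongerRelCuts} together, reducing the cut inclusion in the definition of $\rho_f(B,L)$ to a set inclusion, and then to a gauge function inequality. The degenerate case $f\notin\intr(L)$ is immediate: by definition, $C_L(R,f)=\R^k_{\geq 0}$, hence $\frac{1}{\alpha}C_L(R,f)=\R^k_{\geq 0}$ for every $\alpha>0$, so the inclusion $C_B(R,f)\subseteq \frac{1}{\alpha}C_L(R,f)$ holds trivially for every $\alpha>0$ and every $R$. This yields $\rho_f(B,L)=\inf\{\alpha>0\}=0$.

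For the main case $f\in \intr(L)$, fix $\alpha>0$ and define $L_\alpha:=\frac{1}{\alpha}L+(1-\frac{1}{\alpha})f$. Since the affine map $x\mapsto \frac{1}{\alpha}(x-f)+f$ fixes $f$ and preserves interiors, $f\in\intr(L)$ gives $f\in\intr(L_\alpha)$, so $L_\alpha\in\cC_f^n$ and Proposition~\ref{propRelaxingCut} applies to $L$, yielding $\frac{1}{\alpha}C_L(R,f)=C_{L_\alpha}(R,f)$. Proposition~\ref{propStrongerRelCuts} then gives that $C_B(R,f)\subseteq C_{L_\alpha}(R,f)$ for every $R$ is equivalent to $B\supseteq L_\alpha$ and to $\psi_{B-f}\leq \psi_{L_\alpha-f}$. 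Using $L_\alpha-f=\frac{1}{\alpha}(L-f)$ together with the scaling property~\eqref{eq:gauge_scale_hom} converts $\psi_{L_\alpha-f}$ into $\alpha\psi_{L-f}$. Taking the infimum over $\alpha>0$ of each equivalent condition produces the first and third expressions in the proposition. The equivalence with the second expression is obtained by applying the affine bijection $T(x):=\alpha(x-f)+f$ to both sides of $B\supseteq L_\alpha$, since $T(L_\alpha)=L$ and $T(B)=\alpha B+(1-\alpha)f$.

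For the attainment claim, suppose the infimum $\alpha^\ast$ is finite; I will show $B\supseteq L_{\alpha^\ast}$, which, via the equivalences above, forces all three infima to be attained at $\alpha^\ast$. Any point $x\in L_{\alpha^\ast}$ has the form $x=\frac{1}{\alpha^\ast}y+(1-\frac{1}{\alpha^\ast})f$ for some $y\in L$; for each $\alpha>\alpha^\ast$ the point $x_\alpha:=\frac{1}{\alpha}y+(1-\frac{1}{\alpha})f$ lies in $L_\alpha\subseteq B$, and $x_\alpha\to x$ as $\alpha\to \alpha^\ast$. Closedness of $B$ gives $x\in B$, as desired. There is no real obstacle in this proof — the only points requiring a little care are verifying that $f\in\intr(L_\alpha)$ so that Proposition~\ref{propRelaxingCut} is applicable, and invoking closedness of $B$ in the limit argument for attainment.
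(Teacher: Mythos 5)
Your proposal is correct and follows essentially the same route as the paper: dispose of the degenerate case $f\notin\intr(L)$ via $C_L(R,f)=\R^k_{\geq 0}$, then combine Propositions~\ref{propRelaxingCut} and~\ref{propStrongerRelCuts} (with the gauge scaling property~\eqref{eq:gauge_scale_hom} and the homothety $x\mapsto\alpha(x-f)+f$) to identify the three infima with $\rho_f(B,L)$. You in fact supply more detail than the paper's two-line proof, including a closedness argument for the attainment claim that the paper leaves implicit; the only micro-step left unstated there is that the homothets $\frac{1}{\alpha}L+(1-\frac{1}{\alpha})f$ are nested in $\alpha$ (since $f\in L$), which justifies $L_\alpha\subseteq B$ for all $\alpha>\alpha^\ast$.
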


\begin{proof} If $f\not\in\intr(L)$, then $\rho_f(B,L)  = 0$ since $C_L(R,f) = \R^k_{\geq0}$ for all $R$. Thus, assume $f\in \intr(L)$. The desired result follows by applying Propositions~\ref{propRelaxingCut} and~\ref{propStrongerRelCuts}.

\end{proof}

\section
{One-for-all theorems}
\label{sec:oneforall}


For proving Theorem~\ref{thmFinitenessFamilyFamily}, we first derive an analogous result about approximation of a single set $L$ by a family $\cB$ in the case of a fixed $f$. In order to prove this analogous result, we apply the following lemma. We recall that by the Minkowski-Weyl theorem, every nonempty polyhedron can be represented as a Minkowski sum of a polytope and a finitely generated polyhedral cone (see Theorem 3.14 in~\cite{conforti2014integer}).

	\begin{lemma}\label{lem:approximating_L}
		Let $V$ be a nonempty finite subset of $\R^n$ and let $W$ be a nonempty finite subset of $\R^n \setminus \{0\}$. Let $L := \conv(V)+ \cone(W)$ and take $f \in \intr(L)$. Let $L_t := \conv(V \cup (f+ t W))$ for $t \in \N$. Then $f \in \intr(L_t)$ for every $t\in \N$, the sequence $(L_t)_{t=1}^\infty$ is increasing with respect to set inclusion, and $L_t \stackrel{d_f}{\to} L$, as $t \to \infty$.
	\end{lemma}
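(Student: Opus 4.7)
The plan is to establish, in order, (i) $f \in \intr(L_t)$ for all $t \in \N$, (ii) the monotonicity $L_t \subseteq L_{t+1}$, and (iii) the convergence $L_t \farrow L$. For (i), I would identify the tangent cone at $f$ explicitly and show that it equals $\cone((V-f) \cup W)$, which is $t$-independent. The inclusion $\cone(L_t - f) \supseteq \cone((V - f) \cup W)$ follows because $v - f \in L_t - f$ for $v \in V$ and $w = t^{-1}(f + tw - f) \in \cone(L_t - f)$ for $w \in W$. For the reverse, writing any $p = \sum_i \lambda_i v_i + \sum_j c_j w_j \in L$ with $\sum_i \lambda_i = 1$ yields $p - f = \sum_i \lambda_i (v_i - f) + \sum_j c_j w_j \in \cone((V - f) \cup W)$, so $L - f \subseteq \cone((V - f) \cup W)$; passing to cones and using $\cone(L - f) = \R^n$ (which holds because $f \in \intr(L)$) gives $\cone((V - f) \cup W) = \R^n$. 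Hence $\cone(L_t - f) = \R^n$, which both forces $f \in L_t$ (otherwise a separating hyperplane would trap $L_t - f$ in a half-space, contradicting $\cone(L_t - f) = \R^n$) and then $f \in \intr(L_t)$ by the standard tangent-cone characterization of interior points of a closed convex set.

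For (ii), the algebraic identity
\[
f + tw \;=\; \frac{1}{t+1}\,f \;+\; \frac{t}{t+1}\bigl(f + (t+1)w\bigr)
\]
exhibits $f + tw$ as a convex combination of two points of $L_{t+1}$: the first lies in $L_{t+1}$ by step (i), and the second by construction. Together with $V \subseteq L_{t+1}$, this gives $L_t = \conv(V \cup (f+tW)) \subseteq L_{t+1}$. For (iii), Proposition~\ref{prop:f_met_prop} reduces $L_t \farrow L$ to pointwise convergence $\psi_{L_t - f}(x) \to \psi_{L - f}(x)$. Unwinding the gauge using $L - f = \conv(V - f) + \cone(W)$ and $L_t - f = \conv((V - f) \cup tW)$ produces the linear-programming characterizations
\[
\psi_{L - f}(x) = \min\Bigl\{\textstyle\sum_i a_i \,:\, a, b \ge 0,\; x = \sum_i a_i(v_i - f) + \sum_j b_j w_j\Bigr\}
\]
and $\psi_{L_t - f}(x) = \min\{\sum_i a_i + \tfrac{1}{t}\sum_j b_j : \text{same constraints}\}$. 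The feasible set sits inside the nonnegative orthant and is therefore pointed, so the first LP is solved at a vertex $(a^*, b^*)$, which in particular has $\sum_j b_j^* < \infty$. Plugging this vertex into the second LP sandwiches
\[
\psi_{L - f}(x) \;\le\; \psi_{L_t - f}(x) \;\le\; \psi_{L - f}(x) + \tfrac{1}{t}\textstyle\sum_j b_j^*,
\]
and letting $t \to \infty$ completes the proof.

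The main obstacle is step (i): it must hold for every $t \in \N$, not merely for large $t$, which rules out the natural density and truncation arguments that work only asymptotically. The tangent-cone trick sidesteps this by producing a $t$-independent description $\cone((V - f) \cup W)$ of $\cone(L_t - f)$, so that the hypothesis $f \in \intr(L)$ can be leveraged uniformly in $t$. The ordering of the three steps is forced: (ii) relies on $f \in L_{t+1}$, and (iii) requires $L_t \in \cC_f^n$ for the gauge function $\psi_{L_t - f}$ to be well defined and for Proposition~\ref{prop:f_met_prop} to apply.
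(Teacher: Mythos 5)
Your proof is correct, and while the first two claims are handled essentially as in the paper, your convergence argument takes a genuinely different route. For $f \in \intr(L_t)$, your detour through $\cone(L_t - f) \supseteq \cone((V-f)\cup W) = \R^n$ carries the same content as the paper's argument, which separates $0$ from $L_t - f$ and observes that a functional nonnegative on $V-f$ and on $W$ is nonnegative on $L-f$, contradicting $f \in \intr(L)$; monotonicity is the same one-line convexity observation in both. The divergence is in the limit: the paper works on the polar side, showing $((L_t-f)^\circ)_{t=1}^\infty$ is a decreasing sequence of nonempty compact convex sets with $\bigcap_{t=1}^\infty (L_t-f)^\circ = (L-f)^\circ$ and invoking Schneider's Lemma 1.8.2 to get Hausdorff convergence, which is $d_f$-convergence by definition; you instead prove pointwise convergence of the gauges via the conic-programming representations $\psi_{L-f}(x)=\min\{\sum_i a_i\}$ and $\psi_{L_t-f}(x)=\min\{\sum_i a_i+\tfrac{1}{t}\sum_j b_j\}$ over the common feasible set, and then pass to $d_f$-convergence through Proposition~\ref{prop:f_met_prop}. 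Your route buys an explicit pointwise rate $\psi_{L_t-f}(x)-\psi_{L-f}(x)\le \tfrac{1}{t}\sum_j b_j^*$ and avoids the monotone Hausdorff-convergence lemma, at the cost of verifying the two gauge identities and attainment; feasibility holds because $\cone((V-f)\cup W)=\R^n$ and the objective is bounded below, so an optimizer exists, and the appeal to vertices is unnecessary since any optimal $(a^*,b^*)$ is a fixed finite vector independent of $t$. Two small points worth making explicit: the left inequality $\psi_{L-f}\le\psi_{L_t-f}$ follows either from $L_t\subseteq L$ (which you never state, though it is easy since $tW\subseteq\rec(L)$) or directly from the fact that the second LP objective dominates the first; and Proposition~\ref{prop:f_met_prop} requires $L_t, L\in\cC_f^n$, which your step (i) supplies, so your ordering of the steps is indeed the right one.
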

\begin{proof}

Assume to the contrary that $f\not\in \intr(L_t)$ for some $t\in \N$. Then $0 \not\in \intr(L_t - f)= \intr(\conv((V-f) \cup (tW))$. We apply a separation theorem for $0$ and $\intr(L_t - f)$ to deduce the existence of a vector $u \in \R^n \setminus \{0\}$ such that $\sprod{u}{x} \ge 0$ for all $x \in L_t -f$. This implies $\sprod{u}{x} \ge 0$ for all $x \in V-f$, giving $\sprod{u}{x} \ge 0$ for all $x \in \conv(V - f)$, as well as $\sprod{u}{x} \ge 0$ for all $x \in t W$, giving $\sprod{u}{x} \ge 0$ for all $x \in \cone(W)$. Combining these implications, we get that $\sprod{u}{x} \ge 0$ for all $x \in \conv(V-f)+ \cone(W) = L-f$, which gives $f \not\in\intr(L)$. This is a contradiction.

	Since $f\in L_t$, we can write $L_t$ as 
	\begin{equation*}
	L_t = \conv( V \cup \conv(\{f\}\cup(f + tW))).
	\end{equation*}
	The latter representation of $L_t$ shows that $L_t$ is increasing with respect to set inclusion. 
	
	Note that $L_t \stackrel{d_f}{\to} L$, as $t\to\infty$, if $(L_t-f)^\circ\harrow (L-f)^\circ$, as $t\to\infty$. According to Lemma 1.8.2 in~\cite{rsconvex}, we can prove the latter convergence by showing that $((L_t-f)^\circ)_{t=1}^\infty$ is a sequence of nonempty, compact, convex sets that is decreasing with respect to set inclusion and satisfies $\bigcap_{t=1}^\infty(L_t-f)^\circ =(L-f)^\circ$. For every $t\in \N$, we have $f\in \intr(L_t)$, so $((L_t-f))^\circ)_{t=1}^\infty$ is a sequence of nonempty, compact, convex sets. Since $L_t$ is increasing with respect to set inclusion as $t$ grows, the sequence $((L_t-f)^\circ)_{t=1}^\infty$ is decreasing with respect to set inclusion. Finally, we show that $\bigcap_{t=1}^\infty(L_t-f)^\circ =(L-f)^\circ$. Since $L_t \subseteq L$ for every $t\in \N$, it holds that $(L_t-f)^\circ \supseteq (L-f)^\circ$ for every $t$. This implies $\bigcap_{t=1}^\infty(L_t-f)^\circ \supseteq(L-f)^\circ$. Now consider an arbitrary vector $u\in \bigcap_{t=1}^\infty(L_t-f)^\circ$. So, $\sprod{u}{x} \le 1$ for every $t\in \N$ and $x \in L_t-f$. Recall that $L_t-f = \conv( (V-f) \cup t W)$. Thus, we get $\sprod{u}{x} \le 1$ for every $x \in V-f$ and $\sprod{u}{t x} \le 1$, or equivalently $\sprod{u}{ x} \le \frac{1}{t}$, for every $x \in W$ and every $t \in \N$. For an arbitrary $x \in W$, letting $t \to \infty$ in the inequality $\sprod{u}{ x} \le \frac{1}{t}$ gives us that $\sprod{u}{x} \le 0$. Taking the convex hull of $V-f$ and the convex conic hull of $W$, we arrive at $\sprod{u}{x} \le 1$ for all $x \in \conv(V-f)$ and $\sprod{u}{x} \le 0$ for all $x \in \cone(W)$. Thus, we get $\sprod{u}{x} \le 1$ for all $x \in \conv(V-f) + \cone(W) = L-f$, showing $\bigcap_{t=1}^\infty(L_t-f)^\circ \subseteq(L-f)^\circ$. 
\end{proof}

\begin{theorem}[One-for-all Theorem for a family and a set]\label{thmOneForAll_one_f} 
	Let $f\in \R^n\setminus \Z^n$. Let $\cB\subseteq \cC^n$ be $f$-closed, and let $L\in \cC^n$ be a polyhedron given by $L = \conv(V)+\cone(W)$, where $V$ is a nonempty finite subset of $\R^n$ and $W$ is a finite (possibly empty) subset of $\R^n\setminus \{0\}$. Then

\begin{equation}\label{eq:one_for_all_set_family}
		\frac{1}{|V|+|W|+1} \inf_{B \in \cB} \rho_f(B,L) \le \rho_f(\cB,L) \le \inf_{B \in \cB} \rho_f(B,L).
\end{equation}
		
\end{theorem}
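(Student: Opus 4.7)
The right-hand inequality is immediate from monotonicity: for any $B \in \cB$ the inclusion $C_\cB(R,f) \subseteq C_B(R,f)$ gives $\rho_f(\cB,L) \le \rho_f(B,L)$, so $\rho_f(\cB,L) \le \inf_{B \in \cB} \rho_f(B,L)$. The content of the theorem is the left-hand inequality. Set $M := |V|+|W|$ and fix an arbitrary $\alpha > \rho_f(\cB, L)$ (the claim is trivial otherwise). The goal is to produce some $B^* \in \cB$ with $\rho_f(B^*, L) \le (M+1)\alpha$; letting $\alpha$ decrease to $\rho_f(\cB, L)$ then yields the bound.

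\textbf{Polytope subcase.} Suppose first that $L = \conv\{u_1, \ldots, u_m\}$ for some $u_1,\dots,u_m\in\R^n$. Form the matrix $R$ with columns $r_i := u_i - f$ and take $s := \tfrac{1}{(m+1)\alpha}(1, \ldots, 1)^\top$. Since $u_i \in L$ implies $\psi_{L-f}(r_i) \le 1$,
\[
\sum_{i=1}^m s_i\,\psi_{L-f}(r_i) \;\le\; \frac{m}{(m+1)\alpha} \;<\; \frac{1}{\alpha},
\]
so $s \notin \tfrac{1}{\alpha} C_L(R,f)$. The hypothesis $\alpha > \rho_f(\cB, L)$ then supplies some $B \in \cB_f$ with $\sum_i s_i\,\psi_{B-f}(r_i) < 1$, which forces $\psi_{B-f}(u_i - f) < (m+1)\alpha$ for every $i$. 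By convexity of $B$ and Proposition~\ref{propGeomRelativeStrength}, $B \supseteq \tfrac{1}{(m+1)\alpha}L + (1-\tfrac{1}{(m+1)\alpha})f$ and hence $\rho_f(B, L) \le (m+1)\alpha$. When $W = \emptyset$, $L$ is already such a polytope with $m = M$, completing the proof.

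\textbf{Polyhedral case via Lemma~\ref{lem:approximating_L}.} When $W \ne \emptyset$, invoke Lemma~\ref{lem:approximating_L} on the increasing polytopes $L_t := \conv(V \cup (f + tW))$, each with at most $M$ vertices, containing $f$ in its interior, contained in $L$, and satisfying $L_t \farrow L$. Because $L_t \subseteq L$, one has $\psi_{L-f}\le\psi_{L_t-f}$ and thus $\tfrac{1}{\alpha} C_L(R,f) \subseteq \tfrac{1}{\alpha} C_{L_t}(R,f)$, giving $\rho_f(\cB, L_t) \le \rho_f(\cB, L) < \alpha$. The polytope subcase then produces $B_t \in \cB_f$ with $\rho_f(B_t, L_t) \le (M+1)\alpha$, equivalently $\psi_{B_t - f} \le (M+1)\alpha\,\psi_{L_t - f}$ pointwise. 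Monotonicity of $L_t$ gives $\psi_{L_t - f} \le \psi_{L_1 - f}$; since $f \in \intr(L_1)$, $\psi_{L_1-f}$ is bounded on the unit sphere. Consequently $\{\psi_{B_t - f}\}_t$ is uniformly bounded on $S^{n-1}$, and the polars $(B_t - f)^\circ$ lie in a common compact ball. Blaschke selection together with Proposition~\ref{prop:f_met_prop} then extracts a subsequence with $B_t \farrow B^* \in \cC_f^n$, and the $f$-closedness of $\cB$ places $B^* \in \cB_f$. Passing to the pointwise limit in $\psi_{B_t - f} \le (M+1)\alpha\,\psi_{L_t - f}$, using $L_t \farrow L$ and Proposition~\ref{prop:f_met_prop}, gives $\psi_{B^*-f} \le (M+1)\alpha\,\psi_{L-f}$, whence $\rho_f(B^*, L) \le (M+1)\alpha$ by Proposition~\ref{propGeomRelativeStrength}.

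\textbf{Main obstacle.} The delicate step is the compactness argument in the polyhedral case: to extract a convergent subsequence of $\{B_t\}$ inside $\cB$, one needs a uniform bound on $\psi_{B_t - f}$ on $S^{n-1}$. This is secured by monotonicity of $L_t$ and $f \in \intr(L_1)$, both granted by Lemma~\ref{lem:approximating_L}, and the $f$-closedness hypothesis on $\cB$ is then exactly the condition needed to guarantee that the limit $B^*$ remains in $\cB_f$.
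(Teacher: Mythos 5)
Your proof is correct and takes essentially the same route as the paper's: in the polytope case you use the same witness matrix $R$ (columns given by the generators of $L$ shifted by $-f$) and the same point $\tfrac{1}{(M+1)\alpha}(1,\dots,1)$, just argued in the forward direction rather than contrapositively, and in the unbounded case the same approximating polytopes $L_t$, Blaschke selection applied to the polars $(B_t-f)^\circ$, and the $f$-closedness hypothesis to keep the limit in $\cB_f$. The only (harmless) omission is the degenerate case $f \notin \intr(L)$, where the gauge description of $C_L(R,f)$ used in your polytope step does not apply; but there $\rho_f(B,L)=0$ for every $B$ by Proposition~\ref{propGeomRelativeStrength}, so the asserted lower bound is trivial, exactly as in the paper's reduction to $\inf_{B\in\cB}\rho_f(B,L)>0$.
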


\begin{proof}
	The validity of the asserted upper bound on $\rho_f(\cB, L)$ follows from the definition of $\rho_f(\cB, L)$ and $\rho_f(B,L)$. 
	
	For proving the asserted lower bound on $\rho_f(\cB,L)$, we first consider the case when $L$ is a polytope. It suffices to consider the case $\inf_{B \in \cB} \rho_f(B,L)> 0$ and $\rho_f(\cB,L) < \infty$. In this case, we have that $f\in \intr(L)$ and $\cB_f\neq\emptyset$. Let $\alpha>0$, and fix $\alpha':= \frac{ \alpha}{ k+1}$ for $k = |\vertices(L)|$. We will show that the condition
	\begin{equation}
	\label{eq:cond1}
	\forall R  \, : \, C_\cB(R,f) \subseteq \alpha C_L(R,f)
	\end{equation}
	implies the condition
	\begin{equation}
	\label{eq:cond2}
	\exists B \in \cB: \forall R,~ C_B(R,f) \subseteq \alpha' C_L(R,f).
	\end{equation}
		
Proposition~\ref{propGeomRelativeStrength} implies that $C_B(R,f) \subseteq \alpha'C_L(R,f)$ holds for every $R$ if and only if $B-f \supseteq \alpha'(L-f)$ holds. Thus, \eqref{eq:cond2} is equivalent to 
	\begin{equation}
	\exists B \in \cB \, : \, B -f \supseteq \alpha' (L-f),
	\end{equation}
	where $B - f \supseteq \alpha'(L-f)$ can also be reformulated as $B-f \supseteq \alpha' \vertices(L-f) $. Thus, we want to show
\begin{align*}
	&\forall R \,:\, C_\cB(R,f) \subseteq \alpha C_L(R,f) & &\Rightarrow & &\exists B \in \cB \,:\, B -f \supseteq \alpha' \vertices(L-f),
	\end{align*}
	or equivalently
	\begin{align}
	\label{contrapositive}
	&\forall B \in \cB : B-f \not\supseteq \alpha' \vertices(L-f)&&\Rightarrow&&\exists R \, : \, C_\cB(R,f) \not\subseteq \alpha C_L(R,f).
	\end{align}
	
	So assume that the premise of \eqref{contrapositive} is fulfilled. Let $r_1, \dots, r_k$ be the vertices of $L-f$ and set $R=(r_1,\ldots,r_k)$. For every $B \in \cB$, there exists $i \in [k]$ such that $\alpha' r_i \not \in B-f$ and $\psi_{B-f}(r_i) > 1/\alpha'$. This implies that $\alpha' \sum_{i=1}^k \psi_{B-f}(r_i) \ge 1$, and so $\alpha' (1,\ldots,1) \in C_\cB(R,f)$. Using
	$$\frac{\alpha'}{\alpha}\sum_{i=1}^k\psi_{L-f}(r_i) = \frac{k}{k+1}<1,$$
	we get that $\psi_{L-f}(r_i)=1$ for every $i \in [k]$. Hence $\alpha' (1,\ldots,1)\not\in \alpha C_{L}(R,f)$. Summarizing, condition \eqref{eq:cond1} implies condition \eqref{eq:cond2}. 

Consider any $\alpha>0$ such that $\rho_{f}(\cB,L) \leq \frac{1}{\alpha}$. By definition of $\rho_{f}(\cB,L)$, condition~\eqref{eq:cond1} holds for such an $\alpha$. Thus, condition~\eqref{eq:cond2} holds and there exists some $B\in \cB$ such that $C_B(R,f) \subseteq \alpha' C_L(R,f)$ holds for every $R$ with $\alpha' = \frac{\alpha}{k+1}$. Thus, we get $\inf_{\overline{B} \in \cB}\rho_f(\overline{B},L) \leq \rho_f(B,L)\leq \frac{(k+1)}{\alpha}$. Since $\alpha > 0$ was chosen arbitrarily with $\rho_{f}(\cB,L) \leq \frac{1}{\alpha}$, we obtain that $\inf_{\overline{B} \in \cB}\rho_f(\overline{B},L) \leq (k+1)\rho_{f}(\cB,L)$. This gives the desired lower bound in~\eqref{eq:one_for_all_set_family} as $k+1 = |\vertices(L)|+1\leq |V|+1 = |V|+|W|+1$.
	
	Now consider the case when $L$ is not a polytope. The proof idea is to approximate $L$ by a sequence of polytopes, use the proof of the polytope case and pass to the limit. For this, consider the lattice-free polytopes $L_t = \conv(V \cup (f + t W))$ with $t \in \N$. From Lemma~\ref{lem:approximating_L}, $f\in \intr(L_t)$ for every $t\in \N$,  the sequence $(L_t)_{t=1}^\infty$ is increasing with respect to set inclusion, and $L_t\farrow L$, as $t\to\infty$.
	
	
	Let $\alpha>0$ be such that $\rho_f(\cB, L)\leq 1/\alpha$. For every $R$, one has $C_\cB(R,f) \subseteq \alpha C_L(R,f)$. Fix $\alpha':= \alpha/ (|V|+|W|+1)$. Since $L_t$ is a polytope and $f\in \intr(L_t)$, we can apply the bounded case to obtain the implication 
	\begin{align}
	\label{eq:impl:t}
	&\forall R  \, : \, C_\cB(R,f) \subseteq \alpha C_{L_t}(R,f)  & &\Rightarrow & \exists B \in \cB \, : \, B - f \supseteq \alpha'(L_t-f).
	\end{align} 
	For every $t\in \N$, we  have $L_t \subseteq L$, which implies $C_L(R,f) \subseteq C_{L_t}(R,f)$ for all $R$. Thus, the premise of \eqref{eq:impl:t} holds for every $t\in \N$. It follows that for every $t\in \N$ there exists $B_t \in \cB$ with $B_t -f \supseteq \alpha'(L_t -f)$. Dualization of the latter containment gives $(B_t-f)^\circ \subseteq  (\alpha'(L_t-f))^\circ$ for every $t\in \N$.

	We claim there exists a subsequence of $((B_t-f)^\circ)_{t=1}^\infty$ that converges in the Hausdorff metric. There exists a sufficiently small closed ball $U$ that is contained in $\alpha'(L_1-f)$ and has a center at the origin. Since $(L_t)_{t=1}^\infty$ is increasing with respect to set inclusion, we have that $L_1\subseteq L_t$ for every $t\in \N$. Therefore $U\subseteq \alpha'(L_t-f)$ and $(\alpha'(L_t-f))^\circ\subseteq U^\circ$ for every $t\in \N$. It follows that $(B_t-f)^\circ\subseteq (\alpha'(L_t-f))^\circ\subseteq U^\circ$ for every $t\in \N$. Since $0\in \intr(U)$, the polar body $U^\circ$ is bounded, and thus the sequence $((B_t-f)^\circ)_{t=1}^\infty$ is bounded in the Hausdorff metric. By Blaschke's selection theorem~\cite[Theorem 1.8.6]{rsconvex}, there is a convergent subsequence, whose limit we represent as $(B-f)^\circ$. Since $\cB$ is $f$-closed, we get $B \in \cB$. Furthermore, since $(B_t-f)^\circ \subseteq  (\alpha'(L_t-f))^\circ$ for every $t\in \N$, when passing along such a convergent subsequence, we get the inclusion $(B-f)^\circ \subseteq (\alpha' (L-f))^\circ$. 
	
	Dualization of the previous containment gives $B-f \supseteq \alpha'  (L-f)$, which implies 
	$$\inf_{\overline{B}\in \cB}\rho_f(\overline{B},L)\leq \rho_f(B,L) \leq \frac{1}{\alpha'} = \frac{ |V|+|W|+1}{\alpha}.$$
	Dividing through by $|V|+|W|+1$, we get
	$$\frac{1}{|V|+|W|+1}\inf_{\overline{B}\in \cB}\rho_f(\overline{B},L)\leq \frac{1}{\alpha}.$$
	As the value $\alpha>0$ was arbitrarily chosen such that $\rho_f(\cB, L)\leq 1/\alpha$, the lower bound on $\rho_f(\cB, L)$ in~\eqref{eq:one_for_all_set_family} holds true.
	\end{proof}

An immediate corollary of Theorem~\ref{thmOneForAll_one_f} provides an analogous result in the case of an arbitrary $f$.

\begin{corollary}\label{cor:one_for_all_one_f}
	Let $\cB$ and $L$ be as in Theorem~\ref{thmOneForAll_one_f}. Further assume that $\cB$ is $f$-closed for all $f\in \R^n\setminus \Z^n$. Then
	\begin{equation}\label{eq:one_for_all_one_f-2}
	\frac{1}{|V|+|W|+1}{\adjustlimits\sup_{f\in \R^n\setminus \Z^n} \inf_{B \in \cB}} \rho_f(B,L) \le \rho(\cB,L) \le {\adjustlimits\sup_{f\in \R^n\setminus \Z^n} \inf_{B \in \cB} }\rho_f(B,L).
	\end{equation}
\end{corollary}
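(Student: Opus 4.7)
The plan is to deduce both inequalities of~\eqref{eq:one_for_all_one_f-2} by applying Theorem~\ref{thmOneForAll_one_f} pointwise in $f$ and then taking the supremum over $f \in \R^n\setminus \Z^n$. Since $\cB$ is assumed $f$-closed for \emph{every} $f \in \R^n\setminus \Z^n$, the hypotheses of Theorem~\ref{thmOneForAll_one_f} are satisfied at each such $f$, and one obtains the pointwise chain
\[
\frac{1}{|V|+|W|+1}\inf_{B\in\cB}\rho_f(B,L) \;\le\; \rho_f(\cB,L) \;\le\; \inf_{B\in\cB}\rho_f(B,L).
\]

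The upper bound in~\eqref{eq:one_for_all_one_f-2} will follow by taking $\sup_{f\in \R^n\setminus \Z^n}$ on both sides of the right-hand pointwise inequality and invoking the definition $\rho(\cB,L) = \sup_f \rho_f(\cB,L)$. For the lower bound, I would similarly take the supremum over $f$ of the left-hand pointwise inequality; since the factor $\frac{1}{|V|+|W|+1}$ does not depend on $f$, it can be pulled outside the supremum, and after identifying $\sup_f \rho_f(\cB,L)$ with $\rho(\cB,L)$ the desired inequality emerges.

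No genuine obstacle arises: the statement is purely a routine ``sup-over-$f$'' of the fixed-$f$ bound~\eqref{eq:one_for_all_set_family}. The only nontrivial ingredient is the strengthened hypothesis that $\cB$ be $f$-closed for every $f \in \R^n\setminus \Z^n$, which is precisely what is needed in order to apply Theorem~\ref{thmOneForAll_one_f} uniformly across all such $f$.
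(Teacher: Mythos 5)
Your proposal is correct and is exactly the paper's (implicit) argument: the corollary is obtained by applying the fixed-$f$ bound~\eqref{eq:one_for_all_set_family} at every $f\in\R^n\setminus\Z^n$ (valid since $\cB$ is assumed $f$-closed for all such $f$) and taking the supremum over $f$, using $\rho(\cB,L)=\sup_f\rho_f(\cB,L)$ and the fact that the constant $\frac{1}{|V|+|W|+1}$ is independent of $f$. No further comment is needed.
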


Theorem~\ref{thmOneForAll_one_f} and Corollary~\ref{cor:one_for_all_one_f} together give the following more general one-for-all type result, where $\cL$ is a family of sets. 

\begin{theorem}[Quantitative One-for-all Theorem for two families]
	\label{thmOneForAllFamilies}
	Let $\cB\subseteq \cC^n$. Let $\cL$ be a subfamily of polyhedra of $\cC^n$, for which there exists a constant $N\in \N$ satisfying the following property: every $L\in \cL$ has a representation $L = \conv(V)+\cone(W)$ using a nonempty finite subset $V$ of $\R^n$ and a finite (possibly empty) subset $W$ of $\R^n\setminus \{0\}$ for which $|V|+|W|+1\leq N$ holds. Then the following hold:
	\begin{enumerate}[(a)]
	\item Suppose $\cB$ is $f$-closed for a fixed $f\in \R^n\setminus \Z^n$. Then
	\begin{equation}~\label{eq:one_for_all_2_families_fixed_f}
		\frac{1}{N} {\adjustlimits \sup_{L \in \cL} \inf_{B \in \cB}} \rho_f(B,L) \le \rho_f(\cB,\cL) \le {\adjustlimits \sup_{L \in \cL} \inf_{B \in \cB}} \rho_f(B,L). 
	\end{equation}
	\item Suppose $\cB$ is $f$-closed for all $f\in \R^n\setminus \Z^n$. Then
	\begin{equation}\label{eq:one_for_all_2_families_fixed_f-2}
		\frac{1}{N} {\adjustlimits \sup_{L \in \cL, f \in \intr(L)} \inf_{B \in \cB}} \rho_f(B,L) \le \rho(\cB,\cL) \le {\adjustlimits \sup_{L \in \cL, f \in \intr(L)} \inf_{B \in \cB}} \rho_f(B,L).
	\end{equation} 
	\end{enumerate}
\end{theorem}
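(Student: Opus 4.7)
The plan is to deduce Theorem~\ref{thmOneForAllFamilies} directly from Theorem~\ref{thmOneForAll_one_f} and Corollary~\ref{cor:one_for_all_one_f} by using the ``decomposition'' identities \eqref{eq:fixed_f_fam_fam}, \eqref{eqFamilySetvaryingf}, and \eqref{eqFamilyFamilyvaryingf}, which express the family-vs-family strengths as suprema of family-vs-set strengths. So the theorem is essentially a bookkeeping corollary of the single-set case, once we observe that the factor $|V|+|W|+1$ appearing there can be replaced by the uniform bound $N$.

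For part (a), I would first invoke identity \eqref{eq:fixed_f_fam_fam} to rewrite $\rho_f(\cB,\cL) = \sup_{L \in \cL} \rho_f(\cB,L)$. For every $L \in \cL$, Theorem~\ref{thmOneForAll_one_f} applied with a representation $L = \conv(V)+\cone(W)$ with $|V|+|W|+1 \le N$ gives
\[
\frac{1}{N}\inf_{B \in \cB} \rho_f(B,L) \;\le\; \frac{1}{|V|+|W|+1}\inf_{B \in \cB} \rho_f(B,L) \;\le\; \rho_f(\cB,L) \;\le\; \inf_{B \in \cB} \rho_f(B,L).
\]
Taking the supremum over $L \in \cL$ on each side and combining with the decomposition identity yields the two-sided bound in \eqref{eq:one_for_all_2_families_fixed_f}. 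Note that if for some $L$ we have $f \notin \intr(L)$, then $\rho_f(\cB,L) = 0$ and $\inf_B \rho_f(B,L) = 0$, so these terms do not affect either supremum.

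For part (b), the argument is entirely analogous, using identity \eqref{eqFamilyFamilyvaryingf} to write $\rho(\cB,\cL) = \sup\{\rho_f(\cB,L) : L \in \cL,\ f \in \intr(L)\}$. Applying Theorem~\ref{thmOneForAll_one_f} for each such pair $(L,f)$ (which is legitimate because $\cB$ is now assumed $f$-closed for every $f \in \R^n \setminus \Z^n$) and then taking the joint supremum over $L$ and $f \in \intr(L)$ yields \eqref{eq:one_for_all_2_families_fixed_f-2}. Alternatively, one can first apply Corollary~\ref{cor:one_for_all_one_f} to each $L$ to obtain the sandwich in the supremum over $f$, and then take a further supremum over $L \in \cL$; the two orders of suprema commute, so the result is the same.

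There is no serious obstacle: the only point that requires a moment of care is the uniform replacement of the $L$-dependent factor $|V|+|W|+1$ by the global constant $N$, which is exactly the hypothesis on $\cL$. The direction in which this replacement is used matters: the inequality $|V|+|W|+1 \le N$ makes $\frac{1}{N}\inf_B \rho_f(B,L)$ a valid lower bound in the one-for-all inequality, so taking suprema over $L$ preserves the sandwich. The upper bounds need no such uniformity, since they already hold pointwise in $L$ (and in $f$, for part (b)).
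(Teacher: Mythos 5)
Your proposal is correct and matches the paper's own route: the paper likewise obtains part (a) from inequality \eqref{eq:one_for_all_set_family} of Theorem~\ref{thmOneForAll_one_f} together with the decomposition \eqref{eq:fixed_f_fam_fam}, and part (b) from Corollary~\ref{cor:one_for_all_one_f} (equivalently, from applying the theorem pairwise in $(L,f)$ and using \eqref{eqFamilyFamilyvaryingf}), with the uniform bound $|V|+|W|+1\le N$ used exactly as you describe. Your additional remarks on the degenerate case $f\notin\intr(L)$ and on the direction of the replacement by $N$ are accurate and fill in the bookkeeping the paper leaves implicit.
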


Note that~\eqref{eq:one_for_all_2_families_fixed_f} follows from~\eqref{eq:one_for_all_set_family}, and~\eqref{eq:one_for_all_2_families_fixed_f-2} follows from~\eqref{eq:one_for_all_one_f-2}. Using Theorem~\ref{thmOneForAllFamilies}, we are ready to prove Theorem~\ref{thmFinitenessFamilyFamily}.

\begin{proof}[Proof of Theorem~\ref{thmFinitenessFamilyFamily}]
By the Minkowski-Weyl theorem, each $L\in \cL$ can be written as $L = \conv(V)+\cone(W)$, where $V$ is a nonempty finite subset of $\R^n$, $W$ is a finite (possibly empty) subset of $\R^n\setminus\{0\}$, and $|V|+|W|+1\leq N(m,n)$ for a natural number $N(m,n)$. For sets $B,L\in \cC^n_f$ and $\mu\in (0,1)$, Proposition~\ref{propGeomRelativeStrength} implies that $\rho_f(B,L) \leq 1/\mu$ if and only if $B\supseteq \mu L+(1-\mu)f$. With this equivalence, Theorem~\ref{thmOneForAllFamilies} gives us the desired conclusion.
\end{proof}

\begin{remark}
As one can see from the proofs, the assertions of Theorem~\ref{thmOneForAll_one_f}  and Corollary~\ref{cor:one_for_all_one_f} are true without the $f$-closedness assumption on $\cB$ if the polyhedron $L$ is bounded. Similarly, the assertions of Theorems~\ref{thmFinitenessFamilyFamily} and~\ref{thmOneForAllFamilies} are true without the $f$-closedness assumption on $\cB$ if the family $\cL$ consists of bounded polyhedra only. \hfill $\diamond$
\end{remark}

\begin{remark}
If we want to use the $\cB$-closure for approximating other closures, we can replace $\cB$ by $\cl_f(\cB_f)$ and use Theorem~\ref{thmFinitenessFamilyFamily}. In general, passing to $\cl_f(\cB_f)$ is necessary. For example, let $L\subseteq \R^2$ be a lattice-free split, choose $f\in \intr(L)$, and let $\cB:= (\cL_3^2\setminus \cL_2^2)\cap\cC_f^n$ be the set of all lattice-free triangles containing $f$ in the interior. Since $L$ is a split, there is a nonzero vector $r$ in the lineality space of $L$. The intersection cut $C_L((r),f)$ is the empty set while $C_{B}((r),f)$ is nonempty for each $B\in \cB$. Hence $\rho_f(B, L) = \infty$ for each $B\in \cB$. However, it follows from Propositions~\ref{prop:closure_equality} and~\ref{prop:exactly_i_closed} that $\rho_f(\cB,L) \le 1$, see also~\cite[Theorem 1.4]{bbcm}. This example highlights the need for the topological assumption that $\cB$ is $f$-closed when characterizing the finite-approximation conditions $\rho_f(\cB, \cL)<\infty$ and $\rho(\cB, \cL)<\infty$. \hfill $\diamond$

\end{remark}

\section{Approximability results}\label{sec:finite_approx}

In this section we prove the finite approximation parts of Theorems~\ref{thmRelativeStrength} and~\ref{thmRelativeStrength_fixed_f} in Propositions~\ref{prop:finite_approx_f_arb} and~\ref{prop:proof_fixed_f_finite}, respectively. 

\subsection{Truncated cones}\label{sec:trunc_pyr}

This subsection presents a basic tool for proving finite approximation results.

\begin{definition}[Truncated cone]
Let $M\subseteq \R^n$ be a closed, convex set. Let $\alpha\geq 0$ and $p\in \R^n$ be such that for $M' := (1+\alpha) M+p$, the condition $\aff(M)\neq \aff(M')$ holds. Then $P:= \conv(M\cup M')$ is a truncated cone with bases $M$ and $M'$. 
\end{definition}

Note that in our definition of a truncated cone, the bases $M$ and $M'$ need not be bounded sets.

\begin{lemma}
	\label{lemma:tr_pyr}
	Let $p\in \R^n$, $\mu\geq 0$, and $M\subseteq \R^n$ be a closed convex set. Let $P$ be a truncated cone with bases $M$ and $M' = (1+\alpha)M+p$. Then the following hold:
	\begin{enumerate}[(a)]
		\item $P$ can be given by 
		\begin{equation}
			\label{eq:for:trunc:pyr}
			P = \bigcup_{ 0 \le \mu \le 1} \Bigl((1+\mu \alpha) M + \mu p \Bigr).
		\end{equation}

		\item Every point $f \in P$ can be given as 
		\[
			f= (1+ \mu \alpha) x + \mu p
		\] 
		for some $0 \le \mu \le 1$ and $x \in M$. Furthermore, if $\mu$ can be chosen such that $1/3 \le \mu \le 1$, then 
	\begin{equation}
		\label{eq:sandwiching}
		\frac{1}{4} P+\frac{3}{4} f \subseteq \conv( \{x\} \cup M') \subseteq P.
	\end{equation}
	\end{enumerate}
\end{lemma}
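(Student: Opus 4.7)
The plan for part (a) is to verify the set equation~\eqref{eq:for:trunc:pyr} by two inclusions. The inclusion $\supseteq$ is direct: any point $(1+\mu\alpha) z + \mu p$ with $z \in M$ and $\mu \in [0,1]$ rewrites as $(1-\mu) z + \mu\bigl((1+\alpha) z + p\bigr)$, a convex combination of $z \in M$ and $(1+\alpha) z + p \in M'$, so it lies in $P = \conv(M \cup M')$. For $\subseteq$, I would take an arbitrary point of $P$, represent it as $(1-\mu) u + \mu\bigl((1+\alpha) u' + p\bigr)$ with $u, u' \in M$ and $\mu \in [0,1]$, and regroup it as $(1+\mu\alpha) z + \mu p$, where $z = \frac{(1-\mu) u + \mu(1+\alpha) u'}{1+\mu\alpha}$. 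A short check shows that the two coefficients used to form $z$ are nonnegative and sum to $1$, so $z \in M$ by convexity of $M$.

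For part (b), the existence of the representation $f = (1+\mu\alpha) x + \mu p$ with $x \in M$ is immediate from (a). The right inclusion in~\eqref{eq:sandwiching} is clear since $x \in M \subseteq P$ and $M' \subseteq P$. The crux is the left inclusion $\tfrac{1}{4} P + \tfrac{3}{4} f \subseteq \conv(\{x\} \cup M')$. My plan is to take an arbitrary $q \in P$, write $q = (1+\nu\alpha) y + \nu p$ with $y \in M$ and $\nu \in [0,1]$ via part (a), and then exhibit $\tfrac{1}{4} q + \tfrac{3}{4} f$ as a convex combination of $x$ and a single point of $M'$; that is, as $(1-\lambda) x + \lambda\bigl((1+\alpha) z + p\bigr)$ for some $\lambda \in [0,1]$ and $z \in M$. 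Matching the coefficient of $p$ forces $\lambda = \frac{3\mu + \nu}{4}$, and then $z$ is determined as a specific affine combination of $x$ and $y$.

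The main obstacle will be verifying the feasibility of this construction, namely that $\lambda \in [0,1]$ and that $z$ is a genuine convex combination of $x$ and $y$ (so that $z \in M$). The value $\lambda = \frac{3\mu + \nu}{4}$ trivially lies in $[0,1]$ and is strictly positive because $\mu \ge 1/3$, so dividing by $\lambda(1+\alpha)$ in the expression for $z$ is legitimate. The two coefficients forming $z$ sum to $1$ by a direct algebraic identity using $3\mu + \nu = 4\lambda$, and the coefficient of $y$ is manifestly nonnegative. The delicate point is the nonnegativity of the coefficient of $x$, which reduces to the inequality $3\mu(1+\alpha) + \nu - 1 \ge 0$; since $\alpha \ge 0$ and $\nu \ge 0$, this is exactly where the hypothesis $\mu \ge \tfrac{1}{3}$ is used. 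Once these checks are in place, $z$ lies in $M$ and the desired sandwiching inclusion follows.
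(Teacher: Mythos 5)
Your proof is correct, and for part (b) it takes a somewhat different route from the paper. The paper uses convexity of the target set to reduce the left inclusion of~\eqref{eq:sandwiching} to the two extreme cases $\frac{1}{4}M+\frac{3}{4}f$ and $\frac{1}{4}M'+\frac{3}{4}f$, translates so that $x=0$, and then places each of these sets inside $\lambda M'$ for $\lambda=\frac{3}{4}\mu$ and $\lambda=\frac{3}{4}\mu+\frac{1}{4}$, respectively, using $\conv(\{0\}\cup M')=\bigcup_{0\le\lambda\le1}\lambda M'$. You instead treat an arbitrary $q=(1+\nu\alpha)y+\nu p\in P$ uniformly by coefficient matching: $\frac{1}{4}q+\frac{3}{4}f=(1-\lambda)x+\lambda\bigl((1+\alpha)z+p\bigr)$ with $\lambda=\frac{3\mu+\nu}{4}\in\bigl[\frac{1}{4},1\bigr]$ and $z$ a convex combination of $x$ and $y$; the coefficients of $z$ do sum to $1$ because $(1+\nu\alpha)+\bigl(3\mu(1+\alpha)+\nu-1\bigr)=4\lambda(1+\alpha)$, and your key sign condition $3\mu(1+\alpha)+\nu-1\ge 0$, guaranteed by $\mu\ge\frac{1}{3}$, $\alpha\ge0$, $\nu\ge0$, is exactly the inequality driving the paper's first case ($\nu=0$), while your $\nu=1$ specialization is its second case. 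So the two arguments agree in substance: the paper's reduction avoids algebra with a general $\nu$ at the cost of a case split and the translation trick, whereas yours is a single self-contained computation. The one step you should state explicitly (both proofs rely on it, in part (a) as well) is that every point of $\conv(M\cup M')$ is a convex combination of one point of $M$ and one point of $M'$, which holds because $M$ and $M'$ are convex.
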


\begin{proof}

\begin{enumerate}[(a):]

	\item[$(a):$] Equality \eqref{eq:for:trunc:pyr} can be shown directly: 
	\[
		P = \conv(M \cup M') = \bigcup_{0 \le \mu \le 1} ((1-\mu) M + \mu M') = \bigcup_{ 0 \le \mu \le 1} \Bigl((1+\mu \alpha) M + \mu p \Bigr).
	\]
	
	\item[$(b):$] The representation for $f$ follows from $(a)$. The inclusion $\conv(\{x\} \cup M') \subseteq P$ is clear since $x \in M$. We show $\frac{1}{4} P +\frac{3}{4} f\subseteq \conv(\{x\} \cup M')$. Since $P=\conv(M \cup M')$, it suffices to check
	\begin{align*}
		\frac{1}{4} M+\frac{3}{4} f  & \subseteq \conv( \{x\} \cup M'),
		\\
		\frac{1}{4} M'+\frac{3}{4} f  & \subseteq \conv(\{x\} \cup M').
	\end{align*}
	
	This is equivalent to showing the following inclusions obtained by translating the right and the left hand sides by $-x$: 
	\begin{align*}
		\frac{1}{4} (M-x)+\frac{3}{4} (f-x)  & \subseteq \conv(\{0\} \cup (M'-x)), \\ \frac{1}{4} (M'-x)+\frac{3}{4} (f-x)  & \subseteq \conv(\{0\} \cup (M'-x)),
	\end{align*}
	where
	\begin{align*}
		M'-x & = (1+ \alpha) (M-x) + \alpha x + p,
		\\ f -x & = \mu (\alpha x + p).
	\end{align*}
	This shows that for proving the two inclusions we can assume $x=0$ (this corresponds to substitution of $M$ for $M-x$ and $p$ for $\alpha x + p$). So we assume $x=0$ and need to verify 
	\begin{align*}
		\frac{1}{4} M+\frac{3}{4} f  & \subseteq \conv(\{0\} \cup M'),
		\\ \frac{1}{4} M'+\frac{3}{4} f  & \subseteq \conv(\{0\} \cup M'),
	\end{align*}
	where $f = \mu p$ and $M' = (1+\alpha) M + p$. Since $\conv(\{0\} \cup M') = \bigcup_{0 \le \lambda \le 1} \lambda M'$, it suffices to verify to show that the left hand sides are subsets of $\lambda M'$ for some appropriate choices of $\lambda$. For the first inclusion we choose $\lambda = \frac{3}{4} \mu$. We observe that
	\[
		\frac{1}{4} M+\frac{3}{4} f = \frac{1}{4} M +\frac{3}{4} \mu p \subseteq \frac{3}{4} \mu (1+\alpha) M + \frac{3}{4} \mu p = \frac34 \mu ((1+\alpha) M + p) = \lambda M',
	\]
	where the second inclusion is fulfilled since $0\in M$ and $\mu\ge \frac{1}{3}$.
	
	For the second inclusion we choose $\lambda = \frac{3}{4} \mu + \frac{1}{4}$, which is at most 1 since $\mu \leq 1$. We observe that
	\[
		\frac{1}{4} M'+\frac{3}{4} f= \frac{1}{4} (1+\alpha) M + \frac{1}{4} p+\frac{3}{4} \mu p  \subseteq \left(\frac{3}{4} \mu + \frac{1}{4}\right) \big((1+ \alpha) M + p\big),
	\]
	where we use the fact that $\frac14 \leq \frac34 \mu + \frac14$, since $\mu \geq 0$.
\end{enumerate}
\end{proof}


 \subsection{On the approximability of $i$-hedral closures}\label{sec:proving_finiteness}

\begin{lemma}
	\label{lemma:polyhedron_relax}
	Let $M\subseteq \R^n$ be a lattice-free polyhedron given by $M = \{x\in \R^n:a_i\cdot x\leq b_i~\forall i\in [m]\}$ with $m\in \N, a_1, \dots, a_m\in \R^n\setminus \{0\}$, and $b_1, \dots, b_m\in \R$. Then there exists a nonempty subset $I$ of $[m]$ such that the polyhedron $P = \{x\in \R^n:a_i\cdot x\leq b_i~\forall i\in I\}$ can be represented as $P = \Delta_{n-k}\oplus U$, where $U$ is a $k$-dimensional linear subspace of $\R^n$ with $k \in \{0,\ldots,n-1\}$ and $\Delta_{n-k}$ is a $(n-k)$-dimensional simplex.
\end{lemma}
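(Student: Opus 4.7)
The plan is to use the lattice-freeness of $M$ to produce a strictly positive linear dependence among some of the facet normals $a_1,\dots,a_m$, and then extract a minimal such dependence to identify the subset $I$.

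First, I would embed $M$ in a maximal lattice-free set $M^*$ via Proposition~\ref{prop:all_in_max}, and apply Theorem~\ref{thmLovasz}(c) to conclude that $\rec(M^*)$ is a linear subspace of dimension at most $n-1$ (invariance of the dimension under the unimodular transformation handles the unbounded case, while the bounded case gives $\rec(M^*)=\{0\}$). Since $M\subseteq M^*$ are closed convex, $\rec(M)\subseteq\rec(M^*)$, so $\rec(M)$ lies in a proper linear subspace of $\R^n$. By polar duality applied to $\rec(M)=\{x:a_i\cdot x\le 0\ \forall i\}$, this forces $\cone\{a_1,\dots,a_m\}$ to contain a nontrivial linear subspace; writing any nonzero vector $v$ in this lineality space and its negative $-v$ as conic combinations of the $a_i$'s and adding the two expressions produces a strictly positive linear dependence $\sum_{i\in I_0}c_i a_i=0$ with all $c_i>0$, for some nonempty $I_0\subseteq[m]$.

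Next I would let $I\subseteq[m]$ be a minimal (under inclusion) support of such a strictly positive dependence, and establish the key structural identity $|I|=\dim\operatorname{span}\{a_i:i\in I\}+1$. I expect this to be the main technical step: I would argue by contradiction using a standard exchange trick—if the null space of $\{a_i:i\in I\}$ had dimension at least $2$, then combining the positive dependence with a linearly independent second dependence and tuning the combining scalar would zero out one coefficient while keeping the others nonnegative, yielding a positive dependence with strictly smaller support. The same minimality implies that $\cone\{a_i:i\in I\}$ coincides with the linear subspace $\operatorname{span}\{a_i:i\in I\}$, and that dropping any inequality $a_{i_0}\cdot x\le b_{i_0}$ from $I$ strictly enlarges the recession cone, hence each remaining inequality is facet-defining.

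Finally, set $U:=(\operatorname{span}\{a_i:i\in I\})^\perp$ and $k:=\dim U=n-|I|+1$. Since $|I|\ge 2$ (a strictly positive dependence requires at least two nonzero vectors), we have $k\in\{0,\dots,n-1\}$. The polyhedron $P=\bigcap_{i\in I}\{x:a_i\cdot x\le b_i\}$ satisfies $\rec(P)=(U^\perp)^*=U=\lin(P)$, so $P$ is invariant under translation by $U$. Its slice $P\cap U^\perp$ is full-dimensional in $U^\perp$, bounded (since $\rec(P)\cap U^\perp=\{0\}$), and defined by $|I|=(n-k)+1$ facet-defining inequalities, hence an $(n-k)$-simplex $\Delta_{n-k}$. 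Since $\Delta_{n-k}\subseteq U^\perp$ and $U$ is its orthogonal complement, the decomposition $P=\Delta_{n-k}+U$ is direct, yielding $P=\Delta_{n-k}\oplus U$ as required.
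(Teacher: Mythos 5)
Your proof is correct and follows essentially the same route as the paper: both use lattice-freeness (via Lov\'asz's characterization) to produce a subset $I$ of facet normals that are affinely independent and admit a strictly positive dependence $\sum_{i\in I}c_ia_i=0$ --- your minimal-support exchange argument is simply an explicit proof of the Carath\'eodory step the paper invokes --- and then decompose $P$ as the direct sum of the simplex $P\cap V$, where $V=\operatorname{span}\setcond{a_i}{i\in I}$, and the orthogonal complement $U=V^\perp$. The additional details you supply (boundedness and full-dimensionality of the slice, irredundancy of the retained inequalities) are correct and merely flesh out what the paper asserts tersely.
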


\begin{proof} 
Observe that $0\in \conv\{a_i:i\in [m]\}$. To see this, assume to the contrary that $0\not\in \conv\{a_i:i\in [m]\}$. By applying a separating hyperplane theorem for $0$ and $\conv\{a_i:i\in [m]\}$, we see that there exists a vector $u\in \R^n$ such that $a_i\cdot u<0$ for all $i\in [m]$. Since the recession cone of $M$ is equal to $\rec(M)=\{x\in \R^n:a_i\cdot x\leq 0~\forall i\in [m]\}$, we have that $u\in \intr(\rec(M))$. Hence $\rec(M)$ is $n$-dimensional. However, an implication of Theorem~\ref{thmLovasz}$(c)$ is that $\rec(M)$ cannot be $n$-dimensional because $M$ is lattice-free. Thus, we have a contradiction. 

Since $0\in  \conv\{a_i:i\in [m]\}$, Caratheodory's Theorem implies the existence of a nonempty subset $I\subseteq [m]$ such that the points $a_i$ with $i\in I$ are affinely independent and $0\in \conv\{a_i:i\in I\}$. We claim that $I$ is the desired subset. The set $V:=\aff(\conv \setcond{a_i}{i \in I})$ is a linear subspace of $\R^n$. We consider its orthogonal complement $U=V^\perp$. Note that $P = (P\cap V)\oplus U$. Since $0\in \conv\{a_i:i\in I\}$ and $\{a_i\}_{i\in I}$ is an affinely independent set, the set $U$ is a linear subspace of $\R^n$ of dimension $k = n - |I| + 1$, and the set $P\cap V$ is a $(n-k)$-dimensional simplex contained in the linear subspace $V$. 
\end{proof}

The following result is the main component in the proofs of Propositions~\ref{prop:finite_approx_f_arb} and~\ref{prop:proof_fixed_f_finite}. 

\begin{lemma}\label{prop:lifting}
	Let $L\in \cL_{\ast}^n$ and let $f\in \intr(L)$. Let $\gamma\in (0,1]$ and define $L':=\gamma (L-f)+f$. Assume that there are values $m\in \N$ and $t\in \Z$, and a maximal lattice-free set $D\in \cL_{m}^{n-1}$ such that
\begin{enumerate}[(a)]
\item $L\cap(\R^{n-1}\times\{t\})\subseteq D\times \{t\}$,
\item $w(L', e_n)\leq 1$, and
\item $L'\cap (\R^{n-1}\times \{t\})$ is nonempty.
\end{enumerate}
Then there exists a lattice-free set $B\in \cL_{m+1}^n$ such that $\frac{1}{4}\gamma(L-f)+f\subseteq B$. 
\end{lemma}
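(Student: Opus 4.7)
The plan is to build $B$ as a cone
\[
B := \conv\bigl(\{v\} \cup (D \times \{t^\ast\})\bigr)
\]
for an apex $v = (v', v_n)$ and a base height $t^\ast$, both carefully chosen. A cone of this form is an $n$-dimensional polyhedron with exactly $m+1$ facets: there are $m$ slanted facets (one per facet of $D$ joined to the apex $v$) and one base facet $D \times \{t^\ast\}$, so the facet count comes for free.

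The geometric content of the hypotheses unpacks as follows. From (b) and (c) one has $L' \subseteq \R^{n-1} \times [a, b]$ with $b - a \le 1$ and $a \le t \le b$, whence $[a, b] \subseteq [t-1, t+1]$. The shrunken target $L'' := \tfrac{1}{4}(L' - f) + f$ therefore lies in the vertical slab $\R^{n-1} \times [(a+3f_n)/4,\, (b+3f_n)/4]$ of width at most $\tfrac{1}{4}$. Since the affine reflection $x_n \mapsto 2t - x_n$ is lattice-preserving (as $t \in \Z$) and keeps (a)--(c) intact, we may assume $f_n \le t$.

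To show $L'' \subseteq B$, we invoke Lemma~\ref{lemma:tr_pyr}. We interpret $B$ as $\conv(\{x\} \cup M')$ in the notation of that lemma, with $x := v$ and $M' := D \times \{t^\ast\}$, and set up a truncated cone $P := \conv(M \cup M')$ circumscribing $L'$, where $M$ is the homothet of $M'$ that places $v$ on its base; concretely, $M = ((D - p')/(1+\alpha)) \times \{v_n\}$ for suitable $\alpha \ge 0$ and $p' \in \R^{n-1}$. The homothety ratio $1+\alpha$ is chosen large enough for $P$ to engulf $L'$ -- the width bound $b - a \le 1$ together with assumption (a), which controls the slice of $L'$ at $x_n = t$, makes this possible -- and the heights $t^\ast$ and $v_n$ are placed in $[t-1, t+1]$ so that the representation $f = (1+\mu\alpha)x + \mu p$ guaranteed by Lemma~\ref{lemma:tr_pyr}(b) satisfies $\mu \in [1/3, 1]$. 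Then Lemma~\ref{lemma:tr_pyr}(b) yields $\tfrac{1}{4}P + \tfrac{3}{4}f \subseteq \conv(\{x\} \cup M') = B$, and since $L' \subseteq P$,
\[
L'' = \tfrac{1}{4}(L' - f) + f \subseteq \tfrac{1}{4}P + \tfrac{3}{4}f \subseteq B.
\]

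Finally, we verify that $B$ is lattice-free. Because $t^\ast, v_n \in [t-1, t+1]$, the open vertical interval $(\min(t^\ast, v_n), \max(t^\ast, v_n))$ contains at most the single integer $t$. When $t$ does lie in this interval, the slice of $B$ at height $t$ is the convex combination $(1 - \mu_t) v' + \mu_t D$ for some $\mu_t \in [0, 1]$; choosing $v' \in D$ makes this slice a subset of $D$, whose relative interior is lattice-free. Hence $\intr(B) \cap \Z^n = \emptyset$, so $B \in \cL^n_{m+1}$ as required. The main obstacle is the simultaneous selection of $\alpha$, $t^\ast$, $v_n$, and $v'$ in the previous paragraph: $\alpha$ must be large enough that $P \supseteq L'$, the heights must keep $\mu$ in $[1/3, 1]$, and $v' \in D$ is needed for lattice-freeness. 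The most delicate subcase is when the vertical extent of $L''$ straddles $t$, where $t^\ast$ and $v_n$ must lie on opposite sides of $t$ and the condition $v' \in D$ becomes binding.
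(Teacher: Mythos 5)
There is a genuine gap, and it lies exactly where you flag the ``delicate subcase'': your construction forces every horizontal slice of the circumscribing truncated cone $P$ (and hence of $B$) to be a homothet of $D$ with ratio at most $1$, the slice at height $t^\ast$ being $D$ itself. But hypothesis (a) controls the slice of $L$ only at the single height $t$; at other heights $L'$ may ``lean'' arbitrarily far sideways relative to $D$, and then no such $P$ contains $L'$ --- indeed no cone $\conv(\{v\} \cup (D\times\{t^\ast\}))$ even contains $L''=\tfrac14(L'-f)+f$. Concretely, take $n=2$, $t=0$, $D=[0,1]$, $\gamma=1$, $f=(\tfrac12,0)$, and $L=L'=\conv\{(N,\tfrac12),(N+1,\tfrac12),(-N,-\tfrac12),(-N+1,-\tfrac12)\}$ with $N$ large: this is lattice-free, $w(L',e_2)=1$, and $L\cap(\R\times\{0\})=[0,1]\times\{0\}$, so (a)--(c) hold. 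The slices of $L''$ at heights $\pm\tfrac18$ are intervals centered near $\pm N/4$, far to the right and far to the left of $D$. Every slice of $\conv(\{v\}\cup(D\times\{t^\ast\}))$ has the form $(1-\lambda)v'+\lambda D$ with $\lambda\in[0,1]$, so it lies horizontally between $v'$ and $D$; covering the slice at $+\tfrac18$ forces $v'$ far to the right, covering the one at $-\tfrac18$ forces $v'$ far to the left --- impossible. Two further symptoms of the same problem: enlarging the ratio $1+\alpha$ only shrinks the small base $M$ relative to the fixed large base $D\times\{t^\ast\}$, so it cannot help $P$ ``engulf'' $L'$; and even if one allowed the base to be a translate of $D$ following the lean, the apex would then be forced far from $D$, so the condition $v'\in D$ on which your lattice-freeness argument rests is not available --- $v$ is pinned down by the representation of $f$ and by the containment constraints, not freely choosable.

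The paper's proof avoids tying the slices to $D$ altogether, and this is the step your proposal is missing. It writes $(\R^{n-1}\times\{t\})\setminus\relint(D\times\{t\})$ as a union of $m$ closed half-spaces of that hyperplane, separates each one from $\intr(L')$ by a (tilted) half-space $H_j$ of $\R^n$, and sets $B'=H_1\cap\dots\cap H_m\cap\conv\bigl((\R^{n-1}\times\{t-1\})\cup(\R^{n-1}\times\{t+1\})\bigr)$. This $B'$ contains $L'$, has at most $m+2$ facets, and is lattice-free because only the slice at the single integer height $t$ matters and that slice stays inside $D\times\{t\}$ by construction --- no condition on an apex is needed, and the slices at other heights may stick out arbitrarily far. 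Only afterwards, when both slab hyperplanes are facets, does the paper invoke Lemma~\ref{lemma:polyhedron_relax} to extract a sub-collection $P_J$ of the $H_j$ whose slice at height $t$ is a simplex plus a linear subspace, and apply Lemma~\ref{lemma:tr_pyr} to the truncated cone $P_J\cap\conv\bigl((\R^{n-1}\times\{t-1\})\cup(\R^{n-1}\times\{t+1\})\bigr)$ --- whose bases are homothets of $P_J$'s slice, not of $D$ --- to trade one facet for the factor $\tfrac14$ (this is also where the bound $\mu\ge\tfrac13$ is actually verified, using $f_n\in[-\tfrac14,\tfrac14]$, which you assert but do not justify). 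Without the separation step your approach cannot be repaired by tuning $\alpha$, $t^\ast$, $v_n$, $v'$.
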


\begin{proof}
We introduce the following homothetical copy of $L$:
\begin{align*}
L''&:=\frac{1}{4}(L'-f)+f = \frac{1}{4}\gamma(L-f)+f.
\end{align*}
For $w\in \Z$, let $U_w:=\R^{n-1}\times \{w\}$. Without loss of generality, we assume that $t = 0$. Assumptions $(b)$ and $(c)$ imply that $L'\subseteq \conv(U_{-1}\cup U_1)$. If $\intr(L'')\cap U_0 = \emptyset$, then $L''$ is contained in $\conv(U_{b}\cup U_{b+1})$ for some $b\in \{-1,0\}$. In this case, the lattice-free split $B:= \conv(U_{b}\cup U_{b+1})$ gives the desired result. It is left to consider the case when $\intr(L'')\cap U_0\neq \emptyset$. To complete the proof, it is enough to identify a lattice-free set $B\in \cL_{m+1}^n$ such that $L''\subseteq B$.

Assume $\intr(L'')\cap U_0\neq \emptyset$. By assumption $(a)$, there exists an $(n-1)$-dimensional maximal lattice-free set $D \in \cL^{n-1}_m$ such that $D$ has $m$ facets and $L \cap U_0\subseteq D\times\{0\}$. The set $U_0 \setminus \relint(D\times\{0\})$ is a union of $m$ closed half-spaces $Q_1,\ldots,Q_m$ of the space $U_0$. Since $\gamma\in (0,1]$, it follows that $\intr(L')\cap U_0\subseteq \intr(L)\cap U_0\subseteq \relint(D\times\{0\})$. Thus, each of $Q_1,\ldots,Q_m$ is disjoint with $\intr(L')$. We apply a separation theorem for $Q_j$ and $\intr(L')$ to deduce the existence of a closed half-space $H_j$ of the space $\R^n$ with $Q_j\subseteq \R^n\setminus \intr(H_j)$ and $\intr(L')\subseteq H_j $. Thus, the polyhedron 
\begin{equation*}
	P:= H_1 \cap \ldots \cap H_m
\end{equation*}
contains $\intr(L')$, and therefore $L'\subseteq P$. 

Consider the $n$-dimensional polyhedron 
\begin{equation}\label{eqn:defn_B'}
	B':=P \cap \conv(U_{-1} \cup U_1) \in \cL_{m+2}^n
\end{equation}
in $\R^n$. We claim $B'$ is a lattice-free set in $\R^n$. By the definition of each $Q_i$ for $i\in [m]$ and using the containment  $Q_j\subseteq \R^n\setminus \intr(H_j)$, it follows that 
$$U_0\setminus \relint(D\times \{0\}) ~=~ \bigcup_{i=1}^m Q_i~\subseteq ~\R^n\setminus \bigcap_{i=1}^m \intr(H_i) = \R^n\setminus \intr(P).$$
Hence $\intr(P)\cap U_0\subseteq \relint(D\times\{0\})$ and $P\cap U_0\subseteq D\times\{0\}$. Since $D$ is a lattice-free set in $\R^{n-1}$, there are no lattice points in $\intr(P)\cap U_0$. Hence $B'$ is a lattice-free set. Also, since $L'\subseteq P$ and $L'\subseteq \conv(U_{-1}\cup U_1)$, it follows that 
\begin{equation}\label{eq:L'_contained_in_B'}
L'\subseteq B'.
\end{equation}

If $U_{-1} \cap B'$ or $U_1 \cap B'$ is not a facet of $B'$, then $B'$ has at most $m + 1$ facets. In this case, $B' \in \cL_{m+1}^n$ and we have the desired inclusion $L''\subseteq L' \subseteq B$ for $B:= B'$. 

If both $U_{-1} \cap B'$ and $U_1 \cap B'$ are facets of $B'$, finding an appropriate $B$ requires more work. Since $L'' \cap U_0 \ne \emptyset$ and $w(L'', e_n)\leq \frac{1}{4}$, we have that the $n$-th component $f_n$ of $f$ is contained in $[-\frac{1}{4},\frac{1}{4}]$. Without loss of generality, we assume that one even has $0 \le f_n\le \frac{1}{4}$. By Lemma~\ref{lemma:polyhedron_relax} applied to the lattice-free polyhedron $P\cap U_0$ in the space $U_0$, there exists a subset $J \subseteq [m]$ with $|J| \le n$ such that for the polyhedron
\[
	P_J := \bigcap_{j \in J} H_j,
\]
one has the representation
\[
	P_J\cap U_0 = \Delta \oplus V,
\]
where $\Delta$ is a simplex with $1 \le \dim(\Delta) \le n-1$ and $V$ is a linear subspace of $U_0$ with $\dim(V) \le n-2$. Since $B'\cap U_{-1}$ and $B'\cap U_1$ are facets of $B'$, the sets $P_J \cap U_{-1}$ and $P_J \cap U_1$ are $(n-1)$-dimensional. The representation $P_J\cap U_0= \Delta \oplus V$ implies that $P_J \cap U_{-1}$ and $P_J \cap U_1$ are homothetical copies of $P_J\cap U_0$. This implies that 
\begin{equation}\label{eq:defn_T}
	T := P_J\cap  \conv(U_{-1} \cup U_1)
\end{equation}
is a truncated cone with bases $P_J \cap U_{-1}$ and $P_J \cap U_1$. We claim that one can apply Lemma~\ref{lemma:tr_pyr}$(b)$ to $P_J\cap \conv(U_{-1} \cup U_1)$. 
Lemma~\ref{lemma:tr_pyr} can be used with either $M = P_J\cap U_1$ and $M' = P_J\cap U_{-1}$, or $M= P_J\cap U_{-1}$ and $M' = P_J\cap U_1$, depending on which of the two bases $P_J\cap U_1$ or $P_J\cap U_{-1}$ is larger. In both cases, we can verify the condition $\mu\geq \frac{1}{3}$. In the former case, we have $f = (1+\mu\alpha)x+\mu p$ for $\mu\in [0,1]$, $\alpha\geq 0$, $x\in P_J\cap U_1$, and $p\in \R^n$. Note that the $n$-th component of $p$ equals $p_n = -\alpha-2$ because $(1+\alpha)x+p\in P_J\cap U_{-1}$. Since $f_n\in[0, \frac{1}{4}]$, we can rearrange the equality $f_n = (1+\mu\alpha)+\mu p_n$ to obtain $\mu \geq \frac{3}{8}>\frac{1}{3}$. In the latter case, a similar argument can be used to show that $\mu \geq \frac{1}{2}>\frac{1}{3}$. 

Lemma~\ref{lemma:tr_pyr}$(b)$ yields the existence of a polyhedron $T'$ with $|J|+1$ facets satisfying $ \frac{1}{4} T + \frac{3}{4} f \subseteq T' \subseteq T$. Hence 
\[
	B := T'\cap \bigcap_{j \in [m] \setminus J} H_j \quad \subseteq \quad T  \bigcap_{j \in [m] \setminus J} H_j = B'
\]
where the last equality follows from~\eqref{eqn:defn_B'} and~\eqref{eq:defn_T}. Thus, $B$ is a lattice-free polyhedron since $B'$ is lattice-free. Moreover, $B$ has at most $m + 1$ facets. 
Since $f \in \intr(L') \subseteq P \subseteq \bigcap_{j \in [m] \setminus J} H_j$, and $B' = T  \bigcap_{j \in [m] \setminus J} H_j$ as noted above, we obtain $$\frac{1}{4} (B'-f) + f \subseteq\bigg(\frac{1}{4} (T-f) + f\bigg) \bigcap_{j \in [m] \setminus J} H_j\subseteq T'\cap \bigcap_{j \in [m] \setminus J} H_j = B.$$ From~\eqref{eq:L'_contained_in_B'}, we obtain the desired inclusion $$L'' = \frac{1}{4}(L'-f)+f\subseteq \frac{1}{4} (B'-f) + f \subseteq B.$$
~
\end{proof}

\begin{proposition}
	\label{prop:finite_approx_f_arb}
Let $i\in \N$ be such that $i>2^{n-1}$. Then $\rho(\cL_i^n,\cL_{*}^n)\leq 4\Flt(n)$.
\end{proposition}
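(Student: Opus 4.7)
The plan is to reduce, via \eqref{eqFamilyFamilyvaryingf}, i.e.\ $\rho(\cL_i^n,\cL_*^n) = \sup\{\rho_f(\cL_i^n,L):L\in\cL_*^n,\ f\in\intr(L)\}$, and the trivial bound $\rho_f(\cL_i^n,L)\le\rho_f(B,L)$ for any $B\in\cL_i^n$, to the following pointwise task: for each pair $(L,f)$ with $L\in\cL_*^n$ and $f\in\intr(L)$, exhibit a single $B\in\cL_i^n$ satisfying $\tfrac{1}{4\Flt(n)}(L-f)+f\subseteq B$, which by Proposition~\ref{propGeomRelativeStrength} gives $\rho_f(B,L)\le 4\Flt(n)$. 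This $B$ will be produced through a direct application of Lemma~\ref{prop:lifting} with scaling $\gamma:=1/\Flt(n)$ and a facet budget $m\le 2^{n-1}$ in $\R^{n-1}$.

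The case $n=1$ is immediate, since every element of $\cL_*^1$ is contained in a maximal lattice-free interval in $\cL_2^1\subseteq\cL_i^1$. For $n\ge 2$, fix $(L,f)$ and pick, via Theorem~\ref{thm:flat}, a vector $u\in\Z^n\setminus\{0\}$ with $w(L,u)\le\Flt(n)$. Since $\rho_f$, lattice-freeness, and the families $\cL_i^n$ are all invariant under unimodular changes of variables, we may assume $u=e_n$. Let $\pi_n$ denote the projection onto the $n$-th coordinate and set $L':=\gamma(L-f)+f$. Then $w(L',e_n)\le 1$, $f\in\intr(L')$, and $\pi_n(L')=\gamma\pi_n(L)+(1-\gamma)f_n$ has both endpoints strictly interior to $\pi_n(L)$, because $\gamma<1$ and $f_n\in\intr(\pi_n(L))$. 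Writing $U_w:=\R^{n-1}\times\{w\}$, the interval $\pi_n(L')$ has length at most $1$ and hence meets $\Z$ in at most one integer.

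If $\pi_n(L')\cap\Z=\emptyset$, then $L'\subseteq\conv(U_b\cup U_{b+1})=:B$ for some $b\in\Z$, and $B$ is a lattice-free split with $2\le i$ facets; convexity and $f\in B$ give $\tfrac{1}{4\Flt(n)}(L-f)+f=\tfrac{1}{4}L'+\tfrac{3}{4}f\subseteq B$. If instead $\pi_n(L')\cap\Z=\{t\}$, the strict inclusion of projections forces $t\in\intr(\pi_n(L))$, so $L$ contains points strictly above and strictly below $U_t$. I then claim that $L\cap U_t$, viewed in $U_t\cong\R^{n-1}$, is lattice-free: any lattice point in its $U_t$-relative interior could be combined with those upper and lower points of $L$ via convexity to produce an open $\R^n$-neighborhood of it inside $L$, contradicting lattice-freeness of $L$. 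Hence, by Proposition~\ref{prop:all_in_max} and Theorem~\ref{thmLovasz}(b), $L\cap U_t\subseteq D\times\{t\}$ for some maximal lattice-free $D\in\cL_m^{n-1}$ with $m\le 2^{n-1}$. All hypotheses of Lemma~\ref{prop:lifting} are in place, and it returns $B\in\cL_{m+1}^n$ satisfying $\tfrac{1}{4}\gamma(L-f)+f\subseteq B$; since $m+1\le 2^{n-1}+1\le i$, we have $B\in\cL_i^n$, as desired.

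The main technical step, and the only one not immediate from the machinery already developed, is the verification that the cross-section $L\cap U_t$ is lattice-free in $U_t$; this can genuinely fail for integer $t$ on the boundary of $\pi_n(L)$, e.g.\ when $U_t$ supports a facet of $L$. The structural observation that resolves it is the strict inclusion $\pi_n(L')\subsetneq\pi_n(L)$, forced by $\gamma<1$ and $f\in\intr(L)$: any integer detected by $L'$ lies strictly inside $\pi_n(L)$, which is precisely the regime in which the neighborhood argument forbids lattice points in the relative interior of the cross-section. This interior condition is what couples the facet budget $m\le 2^{n-1}$ in $\R^{n-1}$ to the threshold $i>2^{n-1}$ in $\R^n$.
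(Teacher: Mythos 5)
Your proposal is correct and follows essentially the same route as the paper's proof: reduce via the trivial bound $\rho_f(\cL_i^n,L)\le\rho_f(B,L)$, pick a flat direction $u$ with $w(L,u)\le\Flt(n)$, normalize $u=e_n$ by a unimodular map, split the analysis into the lattice-free-split case and the case where $L'$ meets an integer hyperplane $U_t$, note the cross-section $L\cap U_t$ is lattice-free (the paper asserts this via $\intr(L)\cap U_t\neq\emptyset$, which is exactly what your strict-inclusion-of-projections observation provides, and which also gives the full dimension of the slice needed for Proposition~\ref{prop:all_in_max}), and then invoke Lemma~\ref{prop:lifting} with $\gamma=1/\Flt(n)$ and $m=2^{n-1}$. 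The only slip is the remark that an interval of length at most $1$ meets $\Z$ in at most one integer (false when the length is exactly $1$ with integer endpoints), but this is immaterial since your argument only needs \emph{some} integer $t\in\pi_n(L')$, not uniqueness.
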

\begin{proof}
It suffices to consider the case $i=2^{n-1} +1$, as every $\cL_i^n$ with $i > 2^{n-1}$ contains $\cL_{2^{n-1} + 1}^n$ as a subset. The assertion is trivial for $n=1$, so we assume that $n \ge 2$. By the definitions of the functionals $\rho_f(B,L)$ and $\rho_f(\cB,L)$, and with~\eqref{eqFamilySetvaryingf} and~\eqref{eqFamilyFamilyvaryingf}, it follows that
$$ \rho(\cL_i^n,\cL_{*}^n)\leq {\adjustlimits \sup_{L \in \cL_*^n, f \in \intr(L)} \inf_{B \in \cL_i^n}} \rho_f(B,L).$$

Let $L\in \cL_*^n$ and $f\in \intr(L)$. From the previous inequality, it is enough to show that there exists a $B\in \cL_{i}^n$ such that $\rho_f(B,L)\leq 4\Flt(n)$. By Proposition~\ref{propGeomRelativeStrength}, this condition is equivalent to the condition $\frac{1}{4}(L'-f)+f\subseteq B$, where 
$$
L' := \frac{1}{\Flt(n)} (L-f) +f.
$$
Thus, we aim to find a $B\in \cL_{i}^n$ such that $\frac{1}{4}(L'-f)+f\subseteq B$.

By Theorem~\ref{thm:flat}, there exists a $u\in \Z^n\setminus \{0\}$ such that $w(L,u)\leq \Flt(n)$. After a unimodular transformation, we assume $u = e_n$. For $t\in \Z$, let $U_t:=\R^{n-1}\times \{t\}$. If $L'\subseteq \intr(\conv(U_t\cup U_{t+1}))$ for some $t\in \Z$, then setting $B := \conv(U_t\cup U_{t+1})$ yields the desired result. Thus, we assume that $L'\cap (\R^{n-1}\times \{t\})$ is nonempty for some $t \in \Z$.

Since $L$ is lattice-free and $\intr(L)\cap U_t\neq \emptyset$, the set $\{x\in \R^{n-1} :(x,t)\in L\}$ is lattice-free. By Proposition~\ref{prop:all_in_max}, the latter set is a subset of a maximal lattice-free polyhedron $D$ in $\R^{n-1}$. By Theorem~\ref{thmLovasz}$(b)$, $D$ is in $\cL_{m}^{n-1}$ for $m= 2^{n-1}$. The desired conclusion now follows by applying Lemma~\ref{prop:lifting} with $L, L', D, m, t$ and $\gamma := \frac{1}{\Flt(n)}$.
\end{proof}

For $u \in \Q^n \setminus \{0\}$, recall that the denominator of $u$ is the minimal $s \in \N$ such that $s u \in \Z^n$. It is not hard to see that the denominator of a rational vector is invariant up to unimodular transformations.

\begin{proposition}
	\label{prop:proof_fixed_f_finite}
	Let $i \in \N$ be such that $i > n$. Let $f \in \Q^n \setminus \Z^n$, and let $s \in \N$ be the denominator of $f$. Then $\rho_f(\cL_i^n,\cL_\ast^n) \le \Flt(n) 4^{n-1} s$. 
\end{proposition}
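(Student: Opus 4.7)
The plan is to proceed by induction on $n$. For $n=1$ the bound is trivial since $\cL_2^1$ coincides with $\cL_\ast^1$ up to $f$-closure, giving $\rho_f(\cL_2^1,\cL_\ast^1)\le 1\le s$. For $n=2$ it follows immediately from Proposition~\ref{prop:finite_approx_f_arb}: $\rho_f(\cL_3^2,\cL_\ast^2)\le 4\Flt(2)\le 4\Flt(2)s$. I then focus on the inductive step for $n\ge 3$. Given such $L,f,s$, I first apply the Flatness Theorem to find a primitive $u\in\Z^n\setminus\{0\}$ with $w(L,u)\le\Flt(n)$, then apply a unimodular transformation---which preserves both lattice-freeness and the denominator $s$ of $f$---to arrange that $u=e_n$. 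After this reduction $f_n=u\cdot f_{\mathrm{orig}}\in\frac{1}{s}\Z$.

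Next, set $c:=\Flt(n)4^{n-2}s$ and $L^{(1)}:=\frac{1}{c}(L-f)+f$, so that $w(L^{(1)},e_n)\le\frac{1}{4^{n-2}s}$. The analysis bifurcates on whether $f_n\in\Z$. If $f_n\notin\Z$, then $f_n$ sits at distance at least $1/s$ from every integer, and since $w(L^{(1)},e_n)<\frac{1}{s}$ for $n\ge 3$, the $e_n$-projection of $L^{(1)}$ (which contains $f_n$) is strictly inside the open interval $(\lfloor f_n\rfloor,\lceil f_n\rceil)$. Hence $L^{(1)}$ is contained in the split $B:=\conv(U_{\lfloor f_n\rfloor}\cup U_{\lceil f_n\rceil})\in\cL_2^n\subseteq\cL_{n+1}^n$, giving $\rho_f(B,L)\le c=\Flt(n)4^{n-2}s\le\Flt(n)4^{n-1}s$.

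If instead $f_n\in\Z$, the slice $S:=\{y\in\R^{n-1}:(y,f_n)\in L\}$ is a lattice-free polyhedron in $\R^{n-1}$ containing $\bar f:=(f_1,\ldots,f_{n-1})$ in its interior; because $f_n$ is integer, the denominator of $\bar f$ equals exactly $s$. The induction hypothesis applied to $(S,\bar f)$ produces $D\in\cL_n^{n-1}$ with $D\supseteq\frac{1}{\Flt(n-1)4^{n-2}s}(S-\bar f)+\bar f$. Since $c\ge\Flt(n-1)4^{n-2}s$ by monotonicity of $\Flt$, a direct computation shows that the slice $L^{(1)}\cap U_{f_n}$, which equals $\frac{1}{c}(S-\bar f)+\bar f$, is contained in $D\times\{f_n\}$. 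I then apply Lemma~\ref{prop:lifting} with $\gamma=1$, $t=f_n$, and this $D$---condition (b) being $w(L^{(1)},e_n)\le 1$ which holds for all $n\ge 2,\,s\ge 1$, and (c) being $f\in L^{(1)}\cap U_{f_n}$---to obtain $B\in\cL_{n+1}^n$ with $B\supseteq\frac{1}{4c}(L-f)+f=\frac{1}{\Flt(n)4^{n-1}s}(L-f)+f$, completing the induction.

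The main obstacle I anticipate is that Lemma~\ref{prop:lifting} as stated requires $D$ to be \emph{maximal} lattice-free, whereas the induction hypothesis delivers only a (not necessarily maximal) lattice-free $D\in\cL_n^{n-1}$. A careful re-reading of the proof of Lemma~\ref{prop:lifting} suggests that the maximality assumption is not actually used---only that $D$ is a full-dimensional lattice-free polyhedron with $m$ facets---so this should go through. A secondary minor point is verifying the width inequality $\Flt(n)/c\le 1$ with the chosen $c=\Flt(n)4^{n-2}s$, which reduces to $4^{n-2}s\ge 1$ and is therefore immediate for $n\ge 2,\,s\ge 1$.
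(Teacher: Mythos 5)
Your proposal follows essentially the same route as the paper's proof: reduce via the flatness theorem and a unimodular transformation to $u=e_n$, split on whether $f_n\in\Z$, absorb the shrunken set into a lattice-free split when $f_n\notin\Z$, and otherwise feed the slice at height $f_n$ through the induction hypothesis and Lemma~\ref{prop:lifting}. Your two anticipated obstacles are non-issues: the paper itself applies Lemma~\ref{prop:lifting} with the non-maximal $D\in\cL_n^{n-1}$ coming from the induction (its proof only uses that $D$ is a full-dimensional lattice-free polyhedron with at most $m$ facets), and the width verification is as immediate as you say. The only difference of substance is that the paper first encloses the slice in a maximal lattice-free $M_0\in\cL_{2^{n-1}}^{n-1}$ before invoking the induction hypothesis, whereas you apply it directly to the slice $S$; that is harmless, since $S$ is a full-dimensional lattice-free polyhedron in $\R^{n-1}$ with $\bar f$ in its interior.

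The one genuine gap is the form of your induction hypothesis. Your base cases (and, as written, your hypothesis) are bounds of the type $\rho_{\bar f}(\cL_n^{n-1},\cL_*^{n-1})\le c$, i.e.\ statements about closures; but the step ``the induction hypothesis applied to $(S,\bar f)$ produces $D\in\cL_n^{n-1}$ with $D\supseteq\frac{1}{\Flt(n-1)4^{n-2}s}(S-\bar f)+\bar f$'' requires a \emph{single} set achieving the containment. A closure bound does not yield this directly: extracting one set from a closure bound is exactly the content of the one-for-all theorems (Theorem~\ref{thmOneForAll_one_f}), and there the constant degrades by $|V|+|W|+1$, which is not uniformly bounded over $\cL_*^{n-1}$. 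The paper avoids this by inducting on the pointwise containment statement (for every lattice-free $\bar L$ with $\bar f\in\intr(\bar L)$ there exists a single $\bar B\in\cL_n^{n-1}$ containing the $\frac{1}{\Flt(n-1)4^{n-2}s}$-shrinking of $\bar L$ about $\bar f$), which is a priori stronger than the proposition itself. You should carry the same strengthened hypothesis; the base cases then must cite the proofs rather than the statements: for $n=1$ take $B=L$ itself, and for $n=2$ use the single $B\in\cL_3^2$ with $B\supseteq\frac{1}{4\Flt(2)}(L-f)+f$ constructed inside the proof of Proposition~\ref{prop:finite_approx_f_arb}, whose statement only bounds $\rho$. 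With that adjustment, the rest of your argument, including your slightly different scaling $\Flt(n)4^{n-2}s$ in the $f_n\notin\Z$ case, goes through and gives the claimed bound.
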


\begin{proof}
	For each $L\in \cL_{*}^n\cap \cC_f^n$, we introduce two homothetical copies of $L$:
	\begin{align*}
	 L' & := \frac{1}{\Flt(n)4^{n-2} s} (L-f) + f,\\
	 L'' & := \frac{1}{4}(L'-f)+f = \frac{1}{\Flt(n)4^{n-1} s} (L-f) + f.
	\end{align*}
Using the definitions of $\rho_f(B,L)$ and $\rho_f(\cB,L)$ along with~\eqref{eq:fixed_f_fam_fam}, it follows that
$$ \rho_f(\cL_i^n,\cL_{*}^n)\leq {\adjustlimits \sup_{L \in \cL_*^n} \inf_{B \in \cL_i^n}} \rho_f(B,L).$$

By the previous inequality and Proposition~\ref{propGeomRelativeStrength}, it is enough to show that for an arbitrary $L \in \cL_{*}^n$ such that $f\in \intr(L)$, there exists a $B\in \cL_{i}^n$ such that $L''\subseteq B$. We verify that such a $B$ exists by induction on $n$. The assertion is clear for $n=1$ by setting $B=L$. Consider $n \ge 2$ such that for every $f' \in \Q^{n-1} / \Z^{n-1}$ with denominator $s$ and for every $\bar L \in \cL_{*}^{n-1}$, there exists $\bar B \in \cL_n^{n-1}$ satisfying $\bar L''\subseteq \bar B$. 

	 Let $L \in \cL_{*}^n$ with $f\in \intr(L)$. Let $u \in \Z^n \setminus \{0\}$ be a vector for which the lattice width of $L$ is attained. One has $u\cdot f \in \frac{1}{s} \Z$. After a unimodular transformation, we assume $u = e_n$  (recall that unimodular transformations do not change the denominator of rational vectors). For $w\in \Z$, define $U_w:=\R^{n-1}\times \{w\}$. Since $w(L', e_n)\leq 1$, there is some $m\in \Z$ such that $L'\subseteq\conv(U_{m-1}\cup U_{m+1})$. Without loss of generality, we assume that $m=0$, and so $L'\subseteq\conv(U_{-1}\cup U_{1})$. 
	
	Consider cases on the integrality of $f_n$. First, suppose $f_n\not\in \Z$. Without loss of generality, we assume that $f_n\in (0,1)$. Thus, $f\in \R^{n-1}\times[\frac{1}{s}, 1-\frac{1}{s}]$. Furthermore, $w(L'', e_n)\leq \frac{1}{s}$ by the choice of $L''$. Consequently, $L''$ is a subset of the lattice-free split $B:= \R^{n-1}\times[0,1]$.
	
	In the case $f_n\in \Z$, we use the induction assumption. Without loss of generality, we assume $f_n =0$. Thus, we can represent $f$ as $f= (f', 0)$, where $f' \in \Q^{n-1} \setminus \Z^{n-1}$ has the same denominator as $f$. Observe that $\setcond{x \in \R^{n-1}}{ (x,0) \in L}$ is a lattice-free polyhedron in $\R^{n-1}$. By Proposition~\ref{prop:all_in_max} and Theorem~\ref{thmLovasz}, there is an $(n-1)$-dimensional maximal lattice-free set $M_0$ such that 
	\begin{equation}\label{eq:fixed_f_contain_L_0}
	\setcond{x \in \R^{n-1}}{ (x,0) \in L}\subseteq M_0\in \cL_{2^{n-1}}^{n-1}.
	\end{equation}

	Applying the induction assumption to $M_0$, we obtain a lattice-free set $D\in \cL_n^{n-1}$ satisfying 
	\begin{equation*}\label{eq:make_D}
	\frac{1}{\Flt(n-1)4^{n-2} s} (M_0-f') + f'\subseteq D.
	\end{equation*}
	 By~\eqref{eq:fixed_f_contain_L_0} and the fact that $\Flt(n) \ge \Flt(n-1)$, we also have 
	\begin{equation*}\label{eq:fixed_f_shrink_L_0}
	L'\cap U_0\subseteq \left(\frac{1}{\Flt(n)4^{n-2} s} (M_0-f') + f'\right)\times\{0\}\subseteq D\times\{0\}.
	\end{equation*}
	
	Since $f_n=0$, the set $L'\cap U_0$ is nonempty. We use Lemma~\ref{prop:lifting} by taking $L$ in Lemma~\ref{prop:lifting} to be equal to $L'$ in this proof and choosing $m=n$, $\gamma=1$ and $t=0$. Thus, there is some $B\in \cL_{n+1}^n$ such that 
	$L'' = \frac{1}{4}L'+\frac{3}{4}f \subseteq B,$
	as desired. 
\end{proof}

\section{On the inapproximability of $i$-hedral closures}\label{sec:infinite_approx}

We use Theorem~\ref{thmFinitenessFamilyFamily}$(b)$ to show $\rho(\cL_i^n,\cL_{i+1}^n)=\infty$ for $2\leq i\leq 2^{n-1}$. We will show that for every $\mu\in (0,1)$, there exists an $L\in \cL_{i+1}^n$ and an $f \in \intr(L)$ such that $\mu (L-f) + f$ is not a subset of any $B \in \cL_i^n$. To this end, we start with the following lemma. 

\begin{lemma}
	\label{eq:shrink}
	Let $B\subseteq \R^n$ be a maximal lattice-free polyhedron with $m$ facets and let $c\in \intr(B)$. Let $z_1, \dots, z_m$ be integer points in the relative interior of the $m$ distinct facets of $B$. There exists an $\epsilon \in (0,1)$ such that, for all $i, j \in [m]$ with $i \ne j$, the interval $[z_i,z_j]$ has a nonempty intersection with $(1-\epsilon) B + \epsilon c$.
\end{lemma}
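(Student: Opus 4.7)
The plan is to produce, for each pair $i \ne j$, a specific point $p_{ij}$ in the open segment $(z_i, z_j)$ lying in $\intr(B)$, and then run a continuity/finiteness argument to find a uniform $\eps$ for which each $p_{ij}$ already lies in $(1-\eps) B + \eps c$.

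For Step 1, let $a_i \cdot x \le b_i$ be the facet-defining inequality of $F_i$, so that $F_i = \{x \in B : a_i \cdot x = b_i\}$. Since $z_j \in \relint(F_j)$ and $F_j \ne F_i$, no face of $B$ properly contained in $F_i$ meets $\relint(F_j)$, so $a_i \cdot z_j < b_i$; symmetrically $a_j \cdot z_i < b_j$. A convex combination shows that every point of the open segment $(z_i, z_j)$ strictly satisfies both the $F_i$ and $F_j$ inequalities. I now argue that the open segment cannot lie entirely in $\bdy(B)$: otherwise some facet $F_k$ of $B$ would contain infinitely many points of the segment and hence the closed segment, forcing $z_i \in F_k$; since $z_i \in \relint(F_i)$, this forces $F_k = F_i$, contradicting that the segment strictly satisfies the $F_i$-inequality. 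Therefore $(z_i,z_j) \cap \intr(B) \ne \emptyset$, and we fix some $p_{ij}$ in this intersection.

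For Step 2, the condition $p_{ij} \in (1-\eps) B + \eps c$ is equivalent to $(p_{ij} - \eps c)/(1-\eps) \in B$. The map $\eps \mapsto (p_{ij}-\eps c)/(1-\eps)$ is continuous on $[0,1)$ and sends $\eps = 0$ to $p_{ij} \in \intr(B)$, so by openness of $\intr(B)$ there is some $\eps_{ij} \in (0,1)$ with $p_{ij} \in (1-\eps) B + \eps c$ for all $\eps \in [0, \eps_{ij}]$. Since there are only finitely many ordered pairs $(i,j)$ with $i \ne j$, I take
\[
\eps := \tfrac{1}{2} \min_{i \ne j} \eps_{ij} \in (0,1),
\]
and then each $p_{ij}$ lies in $(1-\eps) B + \eps c \cap [z_i, z_j]$, proving the lemma.

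The only subtle step is Step 1, specifically ruling out that the open segment lies entirely on $\bdy(B)$; this rests on the standard polyhedral fact that a point in the relative interior of a facet cannot belong to any other facet, which I use twice (once to get strict satisfaction of the $F_i,F_j$ inequalities, once to derive the contradiction in the boundary argument). Everything else is a routine finite minimum over a continuous family.
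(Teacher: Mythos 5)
Your proof is correct and takes essentially the same approach as the paper: the paper simply picks the midpoint of each segment $[z_i,z_j]$, observes that these finitely many midpoints lie in $\intr(B)$ (for the same reason your Step 1 establishes, namely that no facet can contain a relative-interior point of another facet), and then notes that a sufficiently small $\eps$ keeps this finite set inside $(1-\eps)\intr(B)+\eps c$. Your write-up just makes explicit the interior-point and continuity/finite-minimum details that the paper leaves implicit.
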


\begin{proof}
Consider the set $Z:= \{(z_i+z_j)/2:i,j\in [m], i\neq j\}$ of the midpoints of the segments appearing in the assertion. Note that $Z$ is a subset of $\intr(B)$. Clearly, $(1-\epsilon)\intr(B)+\epsilon c\supseteq Z$ holds when $\epsilon \in (0,1)$ is sufficiently small.
\end{proof}

\begin{proposition}
	\label{prop:infinite_approx_f}
Let $i\in \N$ be such that $i\leq 2^{n-1}$. Then $\rho(\cL_i^n,\cL_{i+1}^n)=\infty$.
\end{proposition}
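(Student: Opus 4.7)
My plan is to apply Theorem~\ref{thmFinitenessFamilyFamily}$(b)$ with $\cB = \cL_i^n$ and $\cL = \cL_{i+1}^n$. The required $f$-closedness of $\cL_i^n$ for every $f \in \R^n \setminus \Z^n$ is supplied by Proposition~\ref{prop:i_hedral_closed}. This reduces the proof to verifying the negation of the theorem's containment characterization: for every $\mu \in (0,1)$ I must exhibit an $f \in \R^n \setminus \Z^n$ and an $L \in \cL_{i+1}^n$ with $f \in \intr(L)$ such that no $B \in \cL_i^n$ satisfies $B \supseteq \mu L + (1-\mu) f$.

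Fix $\mu \in (0,1)$. Since $i+1 \leq 2^{n-1}+1 \leq 2^n$, Lemma~\ref{lemma:existence_lattice_free} furnishes a maximal lattice-free polyhedron $L \in \cL_{i+1}^n$ with exactly $i+1$ facets, and Theorem~\ref{thmLovasz}$(a)$ then yields integer points $z_1, \ldots, z_{i+1}$ in the respective relative interiors of these facets. I would choose an interior point $f \in \intr(L)$, possibly depending on $\mu$, and then invoke Lemma~\ref{eq:shrink} to guarantee that each segment $[z_a, z_b]$ with $a \neq b$ meets $\intr(\mu L + (1-\mu) f)$ in a nonempty set; the statement of Lemma~\ref{eq:shrink} is close to what is needed, its proof giving even interior intersection by placing midpoints in the shrunken set.

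Given such $(L, f)$, the core pigeonhole argument is as follows. Suppose for contradiction some $B \in \cL_i^n$ satisfies $B \supseteq \mu L + (1-\mu) f$. Since $B$ is lattice-free, no $z_a$ lies in $\intr(B)$, so for each $a \in [i+1]$ there is a facet-defining inequality $u \cdot x \leq c$ of $B$ with $u \cdot z_a \geq c$. With $i+1$ integer points and at most $i$ facets, pigeonhole yields $a \neq b$ both violating the same inequality $u \cdot x \leq c$. Every $p \in [z_a, z_b]$ then satisfies $u \cdot p \geq c$, so $[z_a, z_b] \cap \intr(B) = \emptyset$. But by construction some $p \in [z_a, z_b]$ lies in $\intr(\mu L + (1-\mu) f) \subseteq \intr(B)$, a contradiction.

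The hard part will be arranging the interior-intersection property for every $\mu \in (0,1)$. Lemma~\ref{eq:shrink} directly supplies it for $\mu$ close to $1$ by placing the midpoints $(z_a + z_b)/2$ inside $(1-\epsilon) L + \epsilon f$ when $\epsilon := 1-\mu$ is small. For smaller $\mu$ the condition that $((z_a+z_b)/2 - (1-\mu) f)/\mu \in \intr(L)$ for every pair is considerably more restrictive, since the preimage of each midpoint under the shrinking map moves far from $f$ in the corresponding direction. I expect handling this regime will require a $\mu$-dependent choice of $(L, f)$: for instance, placing $f$ near the centroid of $\{z_1, \ldots, z_{i+1}\}$ and replacing $L$ by a suitable unimodular image of a base maximal lattice-free polyhedron, chosen so that each midpoint-preimage stays in $\intr(L)$ while $L$ remains lattice-free with exactly $i+1$ facets.
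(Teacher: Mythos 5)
Your reduction via Theorem~\ref{thmFinitenessFamilyFamily}$(b)$ together with Proposition~\ref{prop:i_hedral_closed}, and your pigeonhole argument, coincide with the paper's. However, the step you defer as ``the hard part'' is the actual content of the proof, and your sketch for it does not work. First, replacing a fixed base polyhedron by a unimodular image cannot help: the condition that every segment $[z_a,z_b]$ meets $\mu L + (1-\mu)f$ is preserved by any invertible affine map applied simultaneously to $L$, $f$ and the points $z_a$, so unimodular images of one base polyhedron give nothing new. Second, for a fixed bounded $L$ the set $\mu L+(1-\mu)f$ has diameter $\mu\cdot\operatorname{diam}(L)$, so as $\mu\to 0$ your requirement forces all $\binom{i+1}{2}$ segments between boundary integer points to pass within distance $O(\mu)$ of the single point $f$, i.e.\ in the limit to be concurrent; this cannot be arranged by merely placing $f$ near a centroid. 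Third, and most tellingly, your outline uses only $i+1\le 2^n$ (via Lemma~\ref{lemma:existence_lattice_free}) and never the hypothesis $i\le 2^{n-1}$; yet for $n\ge 2$ and $2^{n-1}<i\le 2^n-1$ the conclusion is false, since Proposition~\ref{prop:finite_approx_f_arb} gives $\rho(\cL_i^n,\cL_\ast^n)\le 4\Flt(n)$ and hence $\rho(\cL_i^n,\cL_{i+1}^n)<\infty$. By Theorem~\ref{thmFinitenessFamilyFamily}$(b)$, the configuration you are trying to build directly from an $(i+1)$-facet maximal lattice-free set in $\R^n$ therefore cannot exist for all $\mu$ in that range of $i$, so any correct completion of your plan must use $i\le 2^{n-1}$ at the construction level---and your suggested mechanisms are insensitive to that hypothesis.

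This is precisely what the paper's proof supplies and what is missing from your proposal: it starts from an $i$-facet maximal lattice-free set $L\subseteq\R^{n-1}$ (possible exactly because $i\le 2^{n-1}$), applies Lemma~\ref{eq:shrink} there, and then builds a $\mu$-dependent pyramid $L'\subseteq\R^n$ with $i+1$ facets whose apex $f=(c,\epsilon\mu)$ lies at height $\epsilon\mu$ just above the integer hyperplane and whose base is a copy of $L$ dilated by the factor $1/(\epsilon\mu)$. Because $L'$ grows as $\mu\to 0$, the shrunken copy $\mu(L'-f)+f$ does not collapse to a point: it still contains the fixed-size slice $L_{1-\epsilon}\times\{0\}$ and reaches down to the base, so the pigeonhole can be run with the $i+1$ integer points $(z_1,0),\dots,(z_i,0),(z_1,-1)$, two of which never fit in one half-space avoiding the shrunken set. (The paper also treats separately, via Theorem~\ref{thmLovasz}$(c)$, the case in which the $(n-1)$-dimensional set $L$ is unbounded.) Without this, or some equivalent $\mu$-dependent dimension-lifting construction, your argument is incomplete.
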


\begin{proof}
	Let $n \ge 2$, as otherwise the assertion is trivial.
	By Proposition~\ref{prop:i_hedral_closed}, the family $\cL_i^n$ is $f$-closed for every $f\in \R^n\setminus \Z^n$. Thus, we can use Theorem~\ref{thmFinitenessFamilyFamily}$(b)$ for proving the assertion. Let $\mu \in (0,1)$. We will show that there exists a maximal lattice-free set $L'$ in $\R^n$ with exactly $i+1$ facets and a point $f \in \intr(L')$ such that every lattice-free polyhedron containing $L'_\mu:=\mu L'+ (1-\mu) f$ as a subset has at least $i+1$ facets. 
	
	By Lemmas~\ref{lemma:existence_lattice_free} and~\ref{eq:shrink}, there exists a maximal lattice-free subset $L$ of $\R^{n-1}$ with $i$ facets, a point $c \in \intr(L)$, $i$ integer points $z_1,\ldots,z_i\in \Z^{n-1}$ in the relative interior of the $i$ facets of $L$, and an $\epsilon \in (0,1)$ such that each segment $[z_j,z_k]$ with $j, k \in [i]$ and $j \ne k$ has a nonempty intersection with the interior of the homothetical copy $L_{1-\epsilon} := (1-\epsilon) (L-c)  + c$ of $L$. 
	
	First, assume that $L$ is bounded. We fix $f:= (c, \epsilon \mu) \in \R^n$. In order to identify an appropriate $L'$, we first introduce an auxiliary polytope $P$ in $\R^n$. We fix $P$ to be the pyramid $P:= \conv (\{f\} \cup F)$
	with apex $f$ and base 
	\[
	F := \left(\frac{1}{\epsilon \mu} (L-c) +c \right) \times \{-1\}.
	\]
	The cross-section of $P$ by the horizontal hyperplane $\R^{n-1}\times\{0\}$ is a homothetical copy of $L \times \{0\}$:
	\begin{equation}\label{eq:P_0_slice}
	P \cap (\R^{n-1} \times \{0\}) = \frac{1}{\epsilon \mu +1} f + \frac{\epsilon \mu}{ \epsilon \mu + 1} F = \left( \frac{1}{\epsilon \mu +1} (L-c) + c\right) \times \{0\}.
	\end{equation}
	Thus, $P \cap (\R^{n-1} \times \{0\}) \subseteq L \times \{0\}$ and, since $L$ is lattice-free, $P$ is also lattice-free. On the other hand, we have the inclusion $P \cap (\R^{n-1} \times \{0\}) \supseteq L_{1-\epsilon} \times \{0\}$, since $1- \epsilon \le 1 - \epsilon \mu \le \frac{1}{\epsilon \mu + 1}$. 
	
	We will now fix $L'$ to be a specific homothetical copy of $P$ with $f \in \intr(L')$ and $P \subseteq L'$, as follows. With each $\lambda \ge 1$ we associate the homothetical copy $P_\lambda := \lambda P + (1-\lambda) (c,-1)$ of $P$ with center at $(c,-1)$. We consider the cross-section of $P_{\lambda}$ by the hyperplane $\R^{n-1}\times \{0\}$:
	\begin{align*}
	P_{\lambda} \cap (\R^{n-1} \times \{0\}) =& (\lambda P) \cap (\R^{n-1}\times \{1-\lambda\}) +  (1-\lambda) (c,-1)\\
	=& \frac{1}{\lambda(\epsilon \mu +1)} f + \left(1- \frac{1}{\lambda(\epsilon \mu +1)}\right)F +(1-\lambda)(c, -1)\\
	=& \left( \frac{\lambda(\epsilon\mu+1)-1}{\epsilon\mu(\epsilon \mu +1)} (L-c) + c\right) \times \{0\}.
	\end{align*}
	We fix $\lambda = \frac{\epsilon\mu(\epsilon \mu +1)+1}{\epsilon\mu+1}$. With this choice, the cross-section $P_\lambda \cap (\R^{n-1} \times \{0\})$ coincides with $L \times \{0\}$. We fix $L' := P_\lambda$. By construction, $L'$ is a lattice-free pyramid with $i+1$ facets, $f \in \intr(L')$, and $P \subseteq L'$. See Figure~\ref{fig:example_of_L_and_P} for an example of $P$ and $L'$ in dimension $n=2$.
	
	According to Theorem~\ref{thmFinitenessFamilyFamily}$(b)$, for proving our assertion, it suffices to verify that every lattice-free polyhedron $M$  with $M \supseteq \mu (L' - f) + f$ has at least $i+1$ facets. Below we will verify an even stronger property that every lattice-free polyhedron $M$ with $\intr(M) \supseteq \mu \eps (P - f) + f$ has at least $i+1$ facets. We consider an arbitrary lattice-free polyhedron $M$ in $\R^n$ such that $\intr(M) \supseteq \mu \eps (P - f) + f$.
	
	Note that $\epsilon\mu (P-f)+f$ is a pyramid with apex $f$ and the base
	\[
	\epsilon \mu (F-f)+f =  L \times \{ - \epsilon^2 \mu^2 \}.
	\]
	Taking into account that the base of $\epsilon \mu (P-f) + f$ is below $\R^n \times \{0\}$, i.e., $-\epsilon^2\mu^2<0$, we conclude that the cross-section of $\epsilon \mu (P-f) + f$ by $\R^{n-1} \times \{0\}$ coincides with the cross-section of $P$ by $\R^{n-1} \times \{0\}$.  The latter implies $\epsilon \mu (P-f) + f \supseteq L_{1-\eps} \times \{0\}$.
	
	The set $\R^n \setminus \intr(M)$ can be represented as a union of closed half-spaces $H_1,\ldots,H_t$, where $t \in \N$. We need to show that $t \ge i+1$. Among the $i$ points $(z_1,0),\ldots,(z_i,0)$, no two distinct points fall into the same half-space $H_k$ with $k \in [t]$. If, say, $(z_1,0)$ and $(z_2,0)$ are both in $H_1$, then the segment $[(z_1,0),(z_2,0)] = [z_1,z_2] \times \{0\}$ is a subset of $H_1$. Since the latter segment has a nonempty intersection with $L_{1-\epsilon} \times \{0\}$, we get a contradiction to $L_{1-\eps} \times \{0\} \subseteq \epsilon\mu (P-f) +f\subseteq \intr(M)$. Thus, $t \ge i$, and without loss of generality, we assume that $(z_k,0) \in H_k$ for $k \in [i]$. 
	
	If $t<i+1$, we must have $t=i$. In this case, the point $(z_1,-1) \in \Z^n$ must belong to one of the hyperplanes $H_k$ with $k \in [i]$ since $M$ is lattice-free. The latter yields a contradiction as follows. Since the points $(z_1,-1)$ and $(z_k,0)$ both belong to $H_k$, their convex combination $q:=\epsilon^2 \mu^2 (z_1,-1) + (1-\eps^2 \mu^2) (z_k,0)$ also belongs to $H_k$ (see Figure~\ref{fig:example_of_L_and_P}). Clearly, $q \in L \times \{-\eps^2 \mu^2\}$, that is, $q$ belongs to the base of the pyramid $\eps \mu (P-f) + f$. We have thus found a point belong to $\eps \mu (P-f) + f$ but not to $\intr(M)$, which contradicts the choice of $M$. 	

	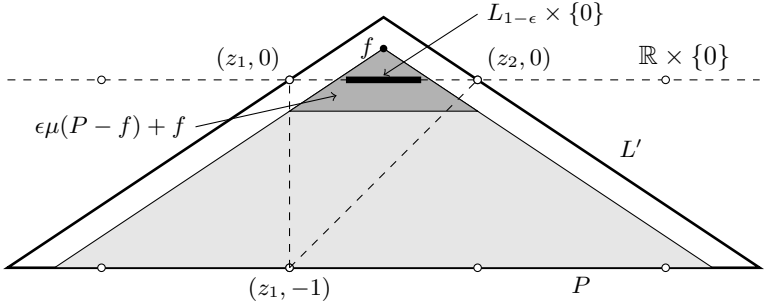
\begin{figure}[H]
	\centering
\begin{tikzpicture}[scale=2.5]
		\draw[line width=1pt] (-1.5,0) -- (2.5,0)  --  (1/2,4/3) node[pos = .4, above right]{\small$L'$} -- cycle;
		\filldraw[fill=black!10!white] (1/2-7/4,0) -- (1/2+7/4,0) node[below, pos = .8]{\small $P$}  -- (1/2, 7 / 6) -- cycle;
		\filldraw[fill=black!30!white] (0,10/12) -- (1,10/12)-- (1/2,7/6) -- cycle;
		\draw[->](-.5, .75)--(.25, 11/12)node[left, pos = 0]{\small $\epsilon\mu (P-f)+f$};
		\fill (1/2,7/6) circle (0.02) node[left]{\small$f$};
		\draw[dashed] (-1.5,1) -- (2.5,1) node[above, pos = .9]{$\R\times\{0\}$};
		\draw[line width=2.5pt] (0.3,1) -- (0.7,1);
		\draw[->](1, 1.35)--(.5, 1.025) node[right, pos = 0]{\small$L_{1-\epsilon}\times\{0\}$};
		\foreach \x in {-1,...,2} 
		\foreach \y in {0,...,1}
		{
			\filldraw[draw=black,fill=white] (\x,\y) circle (0.02);
		}
		\filldraw[draw=black,fill=white] (0,0) circle (0.02) node[below]{\small$(z_1, -1)$};
		\filldraw[draw=black,fill=white] (0,1) circle (0.02) node[above left]{\small$(z_1, 0)$};
		\filldraw[draw=black,fill=white] (1,1) circle (0.02) node[above right]{\small$(z_2, 0)$};
		\draw[dashed](0,0) --(0,1);
		\draw[dashed](0,0) --(1,1);
\end{tikzpicture}
\label{fig:example_of_L_and_P}\caption{Illustration (for $n=2$) of the proof of Lemma~\ref{prop:infinite_approx_f}. The darker shaded set $\epsilon\mu (P-f)+f$ and the lighter shaded $P$ are homothetical copies of $L'$ contained within $L'$. The thick black segment is $L_{1-\epsilon}\times\{0\}$. The points $(z_1, 0), (z_2,0),$ and $(z_1, -1)$ are contained in distinct facets of $L'$, and any half-space containing two points in $\{(z_1, 0), (z_2,0), (z_1, -1)\}$ necessarily contains points in $\epsilon\mu (P-f)+f$, which is contained in $\mu P+(1-\mu)f$.}
\end{figure}

	It is left to consider the case when $L$ is unbounded. Theorem~\ref{thmLovasz}$(c)$, states that after an appropriate unimodular transformation, $L$ can be written as $L = M\times \R^{n-1-n'}$, where $n'\in \N$ is such that $1\leq n'\leq n-2$ and $M$ is a bounded, maximal lattice-free set in $\R^{n'}$ with $i$ facets. By Theorem~\ref{thmLovasz}$(b)$, $i\leq 2^{n'} = 2^{(n'+1)-1}$. Applying the bounded case in dimension $n'+1$, there exists a maximal lattice-free set $L''$ in $\R^{n'+1}$ and an $f''\in \R^{n'+1}$ such that any lattice-free polyhedron in $\R^{n'+1}$ containing $L''_\mu:=\mu L''+ (1-\mu) f''$ as a subset has at least $i+1$ facets. Define $L':=L''\times \R^{n-n'-1}$ and $f = (f'', 0)\in \R^{n'+1}\times \R^{n-n'-1}$. Let $B$ be a lattice-free polyhedron in $\R^n$ containing $\mu L'+(1-\mu)f$. Then $B'' := \{x\in \R^{n'+1}: (x,0)\in B\}$ is a lattice-free polyhedron in $\R^{n'+1}$ containing  $\mu L''+(1-\mu)f''$. Hence $B''$ must have $i+1$ facets, and as a consequence, $B$ must also have $i+1$ facets. This gives the desired result. \end{proof}

\begin{proof}[Proof of Theorem~\ref{thmRelativeStrength}]
If $i>2^{n-1}$, then by Proposition~\ref{prop:finite_approx_f_arb} it follows that $\rho(\cL_i^n, \cL_*^n)$ $\leq 4\Flt(n)$. If $i\leq 2^{n-1}$, then by Proposition~\ref{prop:infinite_approx_f} it follows that $\rho(\cL_i^n, \cL_{i+1}^n) =\infty $.
\end{proof}

Similarly to Proposition~\ref{prop:infinite_approx_f}, proving $\rho_f(\cL_i^n, \cL_*^n)=\infty$ requires us to identify, for every choice of $\mu\in (0,1)$, some polyhedron $L\in \cL_k^n$ with $k>i$ satisfying $B\not\supseteq \mu L+(1-\mu)f$ for every $B\in \cL_i^n$. However, unlike in the proof of Proposition~\ref{prop:infinite_approx_f} where the choices of $L$ and $f$ are allowed to depend on $\mu$, proving $\rho_f(\cL_i^n, \cL_*^n)=\infty$ requires us to handle the situation where $f$ is fixed a priori, independent of $\mu$. The next result takes care of this.

\begin{lemma} \label{lem:non:approx:fixed:f}
	Let $f \in \Q^n \setminus \Z^n$ and $\alpha > 1$. Then the following hold:
	\begin{enumerate}[(a)]
		\item There exists a maximal lattice-free simplex $L \in \cL_{n+1}^n\cap \cC_f^n$ such that, for some choice of $n+1$ integer points $z_1,\ldots,z_{n+1}$ in the relative interior of the $n+1$ distinct facets of $L$, the following is fulfilled: for all $i,j \in [n+1]$ with $i \ne j$, the segment $[z_i,z_j]$ intersects the homothetical copy $L_\alpha := \frac{1}{\alpha} L + (1-\frac{1}{\alpha}) f$ of the simplex $L$. 
		\item For every simplex $L$ as in $(a)$, one has $\rho_f(B,L) \ge \alpha$ for every $B \in \cL_n^{n}$.
	\end{enumerate}
\end{lemma}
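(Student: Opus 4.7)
The plan splits along the two parts: (b) is a short pigeonhole consequence of the segment condition in (a), and (a) is the technical construction adapted to both $f$ and $\alpha$.

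For (b), fix any $B \in \cL_n^n$. Since $B$ is a lattice-free polyhedron in $\R^n$ with at most $n$ facets, $\R^n \setminus \intr(B)$ is a union of at most $n$ closed half-spaces $H_1, \ldots, H_n$. Applying the pigeonhole principle to $z_1, \ldots, z_{n+1}$ from (a) supplies distinct $i, j$ with $\{z_i, z_j\} \subseteq H_k$ for some $k$; convexity of $H_k$ then gives $[z_i, z_j] \subseteq H_k$, so $[z_i, z_j] \cap \intr(B) = \emptyset$. For any $\alpha' < \alpha$, the strict inclusion $L_{\alpha'} \supsetneq L_\alpha$ together with $[z_i, z_j] \cap L_\alpha \ne \emptyset$ (from (a)) and the fact that the endpoints $z_i, z_j$ lie on distinct facets of $L$ and hence outside $L_{\alpha'}$ forces the intersection $[z_i, z_j] \cap L_{\alpha'}$ to be a non-degenerate subsegment containing a point of $\intr(L_{\alpha'})$. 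An inclusion $L_{\alpha'} \subseteq B$ would then force $\intr(L_{\alpha'}) \subseteq \intr(B)$, contradicting $[z_i, z_j] \cap \intr(B) = \emptyset$. Hence $L_{\alpha'} \not\subseteq B$ for every $\alpha' < \alpha$, and Proposition~\ref{propGeomRelativeStrength} yields $\rho_f(B, L) \ge \alpha$.

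For (a), my plan is to build $L$ and $z_1, \ldots, z_{n+1}$ explicitly, exploiting the denominator $s$ of $f$ and the freedom afforded by unimodular transformations. Starting from a maximal lattice-free simplex with exactly $n+1$ facets supplied by Lemma~\ref{lemma:existence_lattice_free}, Theorem~\ref{thmLovasz}$(a)$ yields an integer point in the relative interior of each facet as a candidate $z_i$. I would then apply an affine unimodular transformation placing $f$ at a prescribed interior position of $L$, chosen so that the affine lines through pairs $[z_i, z_j]$ are aligned with directions of maximum Euclidean elongation of $L$. Since unimodular transformations preserve maximal lattice-freeness, the facet structure, and the location of lattice points on facets, while allowing substantial Euclidean elongation (the lattice width of a maximal lattice-free simplex is bounded by $\Flt(n)$, but its Euclidean diameter is not), a further unimodular shearing can stretch $L$ in precisely those directions so that the homothetical copy $L_\alpha$, even while small, still reaches across each of the $\binom{n+1}{2}$ segments.

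The main obstacle, and the technical crux of the proof, is verifying all three conditions of (a) simultaneously for arbitrarily large $\alpha$: $L$ must remain a maximal lattice-free simplex, each facet must have a lattice point $z_i$ in its relative interior, and every segment $[z_i, z_j]$ must meet $L_\alpha$ even as $L_\alpha$ collapses toward $\{f\}$. Because each $z_i \in \Z^n$ and $f$ is merely rational with denominator $s$, a fixed simplex cannot accommodate all $\alpha$: for large $\alpha$, one must genuinely vary $L$ with $\alpha$, making it increasingly elongated along directions aligned with the pairs $z_i, z_j$ whose segments do not already pass exactly through $f$, so that the shrunken $L_\alpha$ extends far enough in Euclidean terms along those directions to catch each segment. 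The delicate interplay between the rational structure of $f$ (dictating which lattice points can serve as $z_i$'s and how $f$ can be positioned inside $L$ via unimodular action) and the Euclidean elongation permitted by shearings is what makes the construction succeed.
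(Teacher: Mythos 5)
Your part (b) is essentially the paper's argument and is fine: $\R^n\setminus\intr(B)$ is a union of at most $n$ closed half-spaces, each $z_i$ lies in this union because $B$ is lattice-free and $z_i\in\Z^n$ (a point you use implicitly and should state), pigeonhole gives a segment $[z_i,z_j]$ disjoint from $\intr(B)$, and the contradiction follows since for $\alpha'<\alpha$ one has $L_\alpha\subseteq\intr(L_{\alpha'})$, so the segment already meets $\intr(L_{\alpha'})$ -- you do not need the slightly shaky detour through ``non-degenerate subsegment'' (a subsegment of positive length could a priori lie in the boundary of $L_{\alpha'}$).

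Part (a), however, is not a proof: it is a plan whose crux you yourself flag as unresolved, and the mechanism you propose does not work as stated. The condition ``$[z_i,z_j]$ meets $\frac{1}{\alpha}L+(1-\frac{1}{\alpha})f$'' is invariant under affine maps applied to the whole configuration $(L,z_1,\ldots,z_{n+1},f)$, so ``shearing to create Euclidean elongation'' changes nothing by itself; keeping $f$ fixed while applying unimodular maps to $L$ only moves the relative position of $f$ inside a fixed unimodular class of simplex. But for a fixed class this can fail for every position of $f$ once $\alpha$ is large: e.g.\ for $L=\conv\{0,2e_1,2e_2\}$ in the plane with facet lattice points $(1,0),(0,1),(1,1)$, the three pairwise segments are not concurrent, so no point $f$ lies close to all of them relative to $L$, and no unimodular image repairs this. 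Hence one must exhibit, for each $\alpha$, a genuinely new simplex (degenerating as $\alpha\to\infty$) together with its facet lattice points, and verify simultaneously that it is maximal lattice-free and that all $\binom{n+1}{2}$ segments meet $L_\alpha$; you give no such construction. The paper does this by induction on $n$: after a unimodular map write $f=(f',0)$, take the $(n-1)$-dimensional simplex $L'$ from the induction hypothesis, and let $L$ be the pyramid with base $(\alpha L'+(1-\alpha)f')\times\{-1\}$ and apex $(f',\frac{1}{\alpha-1})$; the cross-section at height $0$ is $L'\times\{0\}$, the points $z_i=(z_i',0)$ and $z_{n+2}=(z_1',-1)$ lie in the relative interiors of the facets (using $L'\times\{-1\}\subseteq\relint$ of the base), segments among the first $n+1$ points are handled by induction, and segments to $z_{n+2}$ hit the base $L'\times\{-\frac1\alpha\}$ of $L_\alpha$ by an explicit convex combination. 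Without some construction of this kind, your (a) -- and hence the lemma -- remains unproven.
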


\begin{proof}

	$(a)$: We argue by induction on $n$. In the case $n=1$, we can choose $L=[z_1,z_2]$, $z_1 := \lfloor{f\rfloor}$ and $z_2 :=  \lceil{f\rceil}$. Now assuming the assertion is true for some dimension $n \in \N$, we verify the assertion is true for dimension $n+1$. Let $f \in \Q^{n+1} \setminus \Z^{n+1} $ and $\alpha > 1$. Applying a linear unimodular transformation, we assume that the last component of $f$ is zero so that $f$ has the form $f=(f',0)$ with $f' \in \Q^n \setminus \Z^n$. By the induction assumption, there exists a maximal lattice-free simplex $L' \in \cL_{n+1}^n$ such that, for some choice of $n+1$ integer points $z_1',\ldots,z_{n+1}'$ in the relative interior of the $n+1$ distinct facets of $L'$ and for all $i,j \in [n+1]$ with $i \ne j$, the segment $[z_i',z_j']$ intersects $L'_\alpha := \frac{1}{\alpha} L + (1-\frac{1}{\alpha})f$. 
	
	We choose $L \in \cL_{n+2}^{n+1}$ to be the pyramid
	\(
	L:=\conv(\{a\} \cup F)
	\)
	with apex
	\[
	a:=\left(f', \frac{1}{\alpha-1}\right)
	\] 
	and base 
	\[
	F:=(\alpha L' + (1-\alpha) f') \times \{-1\}.
	\]
	
	The cross-section $L \cap (\R^n \times \{0\})$ of $L$ can be given as $(1-\frac{1}{\alpha}) a + \frac{1}{\alpha} F$, which is seen to coincide with $L' \times \{0\}$. The apex $a$ lies above the points $(f',0) \in L'\times \{0\}$ and $(f',-1) \in F$. Fix $z_i:=(z_i',0)$ for every $i \in [n+1]$, and fix $z_{n+2}$ to be any point of the form $(z_i',-1)$ with $i \in [n+1]$, say $z_{n+2} := (z_1',-1)$. The assumption $\alpha > 1$ implies $L' \times \{-1\} \subseteq \relint(F)$. Thus, $z_{n+2} \in \relint(F)$, and the chosen $L$ is indeed a maximal lattice-free simplex with the integer points $z_1,\ldots,z_{n+2}$ in the relative interior of the $n+2$ distinct facets of $L$. We also observe that the base of the homothetical copy $L_\alpha$ of $L$ has the form
	\begin{equation} \label{eq:F:gamma}
	F_\alpha =\frac{1}{\alpha} F + \left(1- \frac{1}{\alpha}\right) f 
		= L' \times \left\{ - \frac{1}{\alpha} \right\}.
	\end{equation}
	That is, the base $F_\alpha$ of $L_\alpha$ and the cross-section $L \cap (\R^n \times \{0\})$ of $L$ are copies of $L'$ that lie in $\R^{n+1}$ one above the other. See Figure~\ref{fig:example_of_L} for an example of $f$, $L$, and $L_{\alpha}$ in dimension $n+1=2$. 	
	
	Consider $i, j \in [n+2]$ with $i \ne j$. If $i$ or $j$ is $n+2$, say $j=n+2$, then it can be checked that the point $\left(1-\frac{1}{\alpha}\right) z_i + \frac{1}{\alpha} z_{n+2}$ belongs to the facet $F_\alpha$ of $L_\alpha$. If neither $i$ nor $j$ coincides with $n+2$, the induction hypothesis yields that the segment $[z_i',z_j']$ intersects  $L'_\alpha$. This implies the segment $[z_i,z_j]$ intersects $L_\alpha$.
	\begin{figure}[H]
	\centering
\begin{tikzpicture}[scale=2.5]
	\draw[very thick] (1/2,1/4) node[above]{$a$}-- (3,-1) -- node[pos = .3, below right]{$L$}  (-2,-1) -- cycle;
	\filldraw[black!15!white,draw=black,thick] (0,-1/5) -- node[black, pos = .8, below right]{$L_{\alpha}$} (1,-1/5) -- (1/2,1/20) -- cycle;	
	\draw (0,0) -- (1,0);
	\draw[dashed,thick] (1,0) -- (0,-1);
	\draw[dashed,thick]  (0,0) -- (0,-1);
	\foreach \x in {-2,...,3} {
		\filldraw[white,draw=black,thick] (\x,-1) circle (0.02);
	}
	\draw[dashed,thick] (1/2,1/4) -- (1/2,0);
	\draw[dashed](-2, 0)--(3,0) node[above, pos = .9]{$\R\times\{0\}$};
	\fill (1/2,0) node[below right]{$f$} circle (0.02);
	\filldraw[white,draw=black,thick] (0,0) circle (0.02) node[above left]{\color{black}$z_1$} (1,0) circle (0.02) node[above right]{\color{black}$z_2$}  (0,-1) circle (0.02) node[below left]{\color{black}$z_3$} ;
\end{tikzpicture}
\label{fig:example_of_L}\caption{Illustration of the inductive step (for $n+1=2$) in the proof of Lemma~\ref{lem:non:approx:fixed:f}. The shaded simplex $L_{\alpha}$ is a homothetical copy of the larger simplex $L$. The values $z_1, z_2, z_3$ are contained in the relative interior of the distinct facets of $L$, and for every pair $i,j\in [3]$ with $i\neq j$, the segment $[z_i, z_j]$ intersects $L_{\alpha}$.}
\end{figure}
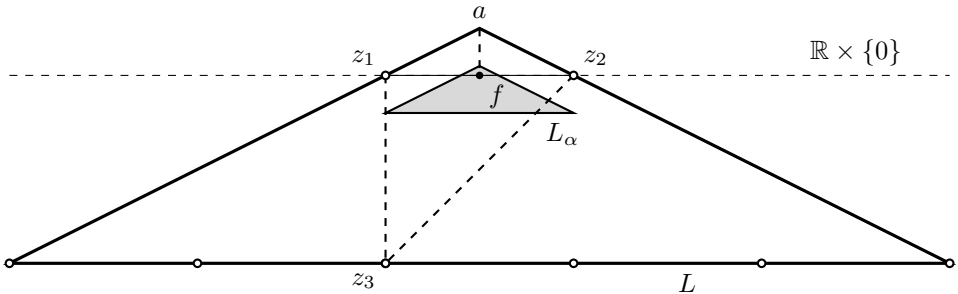

	$(b)$: Assume to the contrary that $\rho_f(B,L) < \alpha$ holds for some $B \in \cL_n^{n}$. By Proposition~\ref{propGeomRelativeStrength}, there is some $0 \le \lambda < \alpha$ such that $B \supseteq L_{\lambda} :=\frac{1}{\lambda} L + (1- \frac{1}{\lambda} )f$. Thus, $L_\alpha \subseteq \intr(L_\lambda) \subseteq \intr(B)$. Since $B \in \cL_n^{n}$, the interior of $B$ is the intersection of $n$ open half-spaces that have no common points in $\Z^n$. Consequently, $\R^n \setminus \intr(B)$ is a union of closed half-spaces $H_1, \ldots, H_n$, which cover $\Z^n$. Since the number of half-spaces $H_1,\ldots,H_n$ is smaller than the number of points $z_1,\ldots,z_{n+1}$, there is a half-space $H_j$ that covers at least two of the points $z_1,\ldots,z_{n+1}$. But then such a $H_j$ also covers the segment joining these two points. By $(a)$, such a segment intersects $L_\alpha$. We have thus shown that $L_\alpha \setminus \intr(B) \ne \emptyset$, which is a contradiction.
\end{proof}

\begin{proposition}
	\label{prop:proof_fixed_f_infinite}
	Let $i\in \N$ be such that $i\leq n$. Let $f\in \Q^n\setminus \Z^n$. Then $\rho_f(\cL_i^n,\cL_{i+1}^n ) = \infty$. 
\end{proposition}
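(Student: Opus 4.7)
The plan is to apply the contrapositive of Theorem~\ref{thmFinitenessFamilyFamily}(a). Since $\cL_i^n$ is $f$-closed by Proposition~\ref{prop:i_hedral_closed}, to prove $\rho_f(\cL_i^n,\cL_{i+1}^n)=\infty$ it suffices to exhibit, for every $\mu\in(0,1)$, an $L\in\cL_{i+1}^n$ with $f\in\intr(L)$ such that no $B\in\cL_i^n$ contains $\mu L+(1-\mu)f$. For $i=1$ this is immediate, because $\cL_1^n=\emptyset$ (any half-space contains lattice points in its interior) while $(\cL_2^n)_f\neq\emptyset$, since some coordinate of $f$ is non-integer and the corresponding split through $f$ lies in $(\cL_2^n)_f$. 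I therefore focus on $2\le i\le n$ and reduce to a lattice-free simplex in dimension $i$: since $f\notin\Z^n$, by permuting coordinates via a unimodular transformation (which preserves $\rho_f$ and the families $\cL_j^n$) I may assume $f_1\notin\Z$, so that $f':=(f_1,\ldots,f_i)\in\Q^i\setminus\Z^i$.

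Fix $\mu\in(0,1)$ and set $\alpha:=2/\mu>1$. Applying Lemma~\ref{lem:non:approx:fixed:f}(a) in dimension $i$ to the rational vector $f'$ and the scalar $\alpha$ yields a maximal lattice-free simplex $L'\subseteq\R^i$ with exactly $i+1$ facets, $f'\in\intr(L')$, and integer points $z_1,\ldots,z_{i+1}$ in the relative interiors of its $i+1$ facets such that every segment $[z_j,z_k]$ with $j\ne k$ meets $L'_\alpha:=\tfrac{1}{\alpha}L'+(1-\tfrac{1}{\alpha})f'$. Setting $L:=L'\times\R^{n-i}$, one checks that $L$ is lattice-free in $\R^n$, has exactly $i+1$ facets, and $f\in\intr(L)$. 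Writing $L_\beta:=\tfrac{1}{\beta}(L-f)+f$, the identity $L-f=(L'-f')\times\R^{n-i}$ yields $L_\beta=L'_\beta\times\R^{n-i}$ for every $\beta>0$, where $L'_\beta$ denotes the analogous homothet of $L'$.

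The final step is a pigeonhole argument modeled on the proof of Lemma~\ref{lem:non:approx:fixed:f}(b). Suppose for contradiction that some $B\in\cL_i^n$ satisfies $B\supseteq\mu L+(1-\mu)f=L_{1/\mu}$. Because $\alpha>1/\mu$ and $f\in\intr(L_{1/\mu})$, the smaller homothet $L_\alpha$ lies in $\intr(L_{1/\mu})\subseteq\intr(B)$. The complement $\R^n\setminus\intr(B)$ is a union of at most $i$ closed half-spaces $H_1,\ldots,H_m$, and each integer lift $\tilde z_j:=(z_j,0,\ldots,0)\in\Z^n$ lies in $\R^n\setminus\intr(B)$ because $B$ is lattice-free. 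Since there are $i+1$ such lifts and at most $i$ half-spaces, pigeonhole produces distinct $k,l$ with $\tilde z_k,\tilde z_l\in H_j$ for some $j$, so $[\tilde z_k,\tilde z_l]=[z_k,z_l]\times\{0\}\subseteq H_j$ is disjoint from $\intr(B)$. But the segment-intersection property of $L'$ forces $[z_k,z_l]\cap L'_\alpha\ne\emptyset$, hence $[\tilde z_k,\tilde z_l]\cap L_\alpha\ne\emptyset$, contradicting $L_\alpha\subseteq\intr(B)$. The main obstacle is the bookkeeping around the product structure (lattice-freeness of $L$, the bijection between facets of $L'$ and of $L$, and the placement of the $\tilde z_j$ in the relative interiors of the corresponding facets), after which the one-dimensional pigeonhole-on-segments step from Lemma~\ref{lem:non:approx:fixed:f}(b) transfers verbatim from $\R^i$ to $\R^n$.
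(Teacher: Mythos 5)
Your proof is correct, and it follows the paper's overall strategy: reduce via Theorem~\ref{thmFinitenessFamilyFamily}(a) and Proposition~\ref{prop:i_hedral_closed} to exhibiting, for each $\mu$, an $L\in\cL_{i+1}^n\cap\cC_f^n$ whose homothet $\mu L+(1-\mu)f$ fits in no $B\in\cL_i^n$, then obtain $L$ as the cylinder $L'\times\R^{n-i}$ over the simplex $L'$ produced by Lemma~\ref{lem:non:approx:fixed:f}(a) in dimension $i$. Where you diverge is the final verification: the paper argues that $\mu L+(1-\mu)f$ contains the affine subspace $A=\{f'\}\times\R^{n-i}$, splits into the cases $B\not\supseteq A$ (done) and $B\supseteq A$ (whence $B=B'\times\R^{n-i}$ with $B'\in\cL_i^i$, and Lemma~\ref{lem:non:approx:fixed:f}(b) applied in $\R^i$ finishes), whereas you rerun the covering/pigeonhole argument of part~(b) directly in $\R^n$: the $i+1$ lifted integer points $(z_j,0)$ avoid $\intr(B)$, the complement of $\intr(B)$ is covered by at most $i$ closed half-spaces, and two lifts in a common half-space give a segment missing $\intr(B)$ yet meeting $L_\alpha\subseteq\intr(B)$. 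Your route buys a slightly cleaner reduction — you only need a coordinate permutation making $f_1\notin\Z$ (the cylinder construction is insensitive to the remaining coordinates of $f$), and you avoid justifying the structural claim that $B\supseteq A$ forces the product form $B=B'\times\R^{n-i}$ — at the cost of repeating the half-space counting argument instead of quoting Lemma~\ref{lem:non:approx:fixed:f}(b) as a black box; the explicit $i=1$ case you add is also handled (vacuously) by the paper. All the supporting checks you flag (lattice-freeness and facet count of $L'\times\R^{n-i}$, $L_\beta=L'_\beta\times\R^{n-i}$, $L_\alpha\subseteq\intr(L_{1/\mu})$ since $\alpha>1/\mu$) are sound.
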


\begin{proof}

	Let $\mu\in (0,1)$. By Theorem~\ref{thmFinitenessFamilyFamily}$(a)$ and Proposition~\ref{prop:i_hedral_closed}, it suffices to show that there exists $L \in \cL_{i+1}^n\cap \cC_f^n$ satisfying $B\not\supseteq \mu L+(1-\mu)f$ for all $B\in \cL_i^n\cap \cC_f^n$. To this end, let $\lambda\in (0,\mu)$. For $i=n$, Lemma~\ref{lem:non:approx:fixed:f} implies there is some $L \in \cL_{i+1}^n\cap \cC_f^n$ such that $\rho_f(B,L)\geq \frac{1}{\lambda}>\frac{1}{\mu}$ for all $B\in \cL_n^{n}$. By Proposition~\ref{propGeomRelativeStrength}, this implies that $B\not\supseteq \mu L+(1-\mu)f$ for all $B\in \cL_i^n\cap \cC_f^n$, as desired. 
	
	Consider the case $i < n$. After applying an appropriate unimodular transformation, we assume that $f = (f',0,\ldots,0) \in \R^n$ for some $f'\in \Q^{i} \setminus \Z^{i}$. An application of Lemma~\ref{lem:non:approx:fixed:f} in dimension $i$ yields the existence of a  maximal lattice-free simplex $L' \in \cL_{i+1}^i$ such that $\rho_f(B',L')\geq \frac{1}{\lambda}>\frac{1}{\mu}$ for all $B'\in \cL_i^{i}$. We choose $L=L' \times \R^{n-i}$ and show that $B\not\supseteq \mu L+(1-\mu)f$ for every $B \in \cL_i^n$. The homothetical copy $\mu L + (1-\mu) f$ of $L$ contains the affine space $A:=\{f'\} \times \R^{n-i}$. If $B\not\supseteq A$, we get $B\not\supseteq \mu L+(1-\mu)f$. Otherwise, $B \supseteq A$  and thus $B$ can be represented as $B=B' \times \R^{n-i}$ with $B' \in \cL_i^i$. In this case, $B\not\supseteq \mu L+(1-\mu)f$ since $B'\not\supseteq \mu L'+(1-\mu)f'$, which holds because $\rho_f(B',L')>\frac{1}{\mu}$ for all $B'\in \cL_i^{i}$.
	\end{proof}

\begin{proof}[Proof of Theorem~\ref{thmRelativeStrength_fixed_f}]
If $i>n$, then by Proposition~\ref{prop:proof_fixed_f_finite} it follows that $\rho_f(\cL_i^n, \cL_*^n) \leq \Flt(n) 4^{n-1} s$. If $i\leq n$, then by Proposition~\ref{prop:proof_fixed_f_infinite} it follows that $\rho_f(\cL_i^n, \cL_*^n) = \infty$.
\end{proof}

\bibliographystyle{siamplain}
\bibliography{full-bib}

\end{document}